\documentclass[11pt]{article}
\usepackage[a4paper,margin=1in]{geometry}
\usepackage[T1]{fontenc}
\usepackage[utf8]{inputenc}
\usepackage{lmodern,amsmath,amssymb,amsthm,mathtools,bm,booktabs,array,microtype,enumitem}
\usepackage[hidelinks]{hyperref}
\usepackage[nameinlink,capitalize]{cleveref}
\newtheorem{theorem}{Theorem}[section]
\newtheorem{proposition}[theorem]{Proposition}
\newtheorem{lemma}[theorem]{Lemma}
\newtheorem{corollary}[theorem]{Corollary}
\theoremstyle{definition}
\theoremstyle{remark}\newtheorem{remark}[theorem]{Remark}
\newcommand{\R}{\mathbb R}\newcommand{\dd}{\,d}

\title{\textbf{Critical Structure of Axisymmetric Navier--Stokes with Swirl}}
\author{Rishad Shahmurov\\\small Cellular Products Research and Development, Roswell, GA, USA}
\date{September 2026}
\begin{document}\maketitle
\begin{abstract}
We study the axisymmetric Navier--Stokes equations with swirl in the variables $F=u^\theta/r$, $G=\omega^\theta/r$, and $\Gamma=ru^\theta$.  Their distinct scaling degrees lead to several critical transition surfaces rather than a single scalar exponent.  We prove mesoscopic and temporal critical-face identities, source/contact trace estimates, compact finite-depth preparation obstructions, an affine six-power flattening barrier, a fixed-old-current compression ceiling, and compactness for rescaled strain generators with bounded action and bounded variation.  A sharp radial estimate gives exponential control of the time occupied by large swirl-energy records; on parabolic time intervals it yields logarithmic routing from total kinetic records to the meridional component, and an independent cone argument routes non-meridional Type-II records to a scale-critical signed compression action.  The source-free circulation also satisfies an exact quadratic dissipation identity, while scalar-weight calculations show that no radial power simultaneously yields a compression-free continuity law and a scale-zero current action.  The results exclude several compact and passively prepared recurrence mechanisms under explicit hypotheses; meridional Type-II records, signed critical compression, noncompact high-frequency behavior, and boundary-entry alternatives remain.
\end{abstract}

\section{Equations and scaling}
Define
\[
F=\frac{u^\theta}{r},\quad G=\frac{\omega^\theta}{r},\quad \Gamma=ru^\theta,\quad U=\frac{u^r}{r},\quad \Delta_5=\partial_{rr}+\frac3r\partial_r+\partial_{zz}.
\]
For the meridional velocity $b=(u^r,u^z)$,
\[
F_t+b\cdot\nabla F=\nu\Delta_5F-2UF,
\qquad
G_t+b\cdot\nabla G=\nu\Delta_5G+\partial_z(F^2),
\]
with meridional velocity recovered from $G$ by the exact axisymmetric Hodge law. Under standard Navier--Stokes scaling $u_\lambda(x,t)=\lambda u(\lambda x,\lambda^2t)$,
\[
\deg \Gamma=0,\qquad \deg F=2,\qquad \deg G=3,\qquad \deg(F^2)=4,\qquad \deg \partial_z(F^2)=5.
\]
Thus the current and swirl-square sectors are intrinsically multi-degree; no single unweighted scalar critical exponent represents the complete response package.

Axisymmetric Navier--Stokes regularity has a long classical history, beginning with the no-swirl theory of Ukhovskii--Yudovich and related work, and continuing with scale-critical and near-critical criteria for flows with swirl; see, for example, \cite{UkhovskiiYudovich1968,ChenStrainTsaiYau2008,ChenStrainTsaiYau2009,LeiZhang2011,LeiNavasZhang2016,Wei2016,LeiZhang2017,Pan2016}.  The results below are structural complements to that theory.  They isolate exact scaling, transport, and compactness obstructions without asserting that the remaining endpoint alternatives are impossible.

\section{Mesoscopic realization face}
\medskip
\noindent\textbf{Mesoscopic critical face.}
\begin{theorem}\label{thm:face}
Suppose a physical realization cost has the form
\[
\mathcal C_m(R,D)\asymp R D^m,\qquad D=R^{q-1}.
\]
Then
\[
\boxed{\mathcal C_m(R,R^{q-1})\asymp R^{1-m(1-q)}.}
\]
The scale-zero face is
\[
\boxed{m(1-q)=1,\qquad q_c(m)=1-\frac1m.}
\]
\end{theorem}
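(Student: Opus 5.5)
The plan is to substitute and compare exponents; the only real content is bookkeeping for the implicit constants in $\asymp$. Start from the hypothesis: there are constants $0<c\le C<\infty$, independent of $R$ and $D$ in the admissible range, with $cRD^m\le \mathcal C_m(R,D)\le CRD^m$. I will take the realization depth $D=R^{q-1}$ and use $R^{q-1}>0$ for $R>0$, so that fractional powers are unambiguous. Then I compute
\[
R\,(R^{q-1})^m = R^{1+m(q-1)} = R^{1-m(1-q)}.
\]
This gives $cR^{1-m(1-q)}\le \mathcal C_m(R,R^{q-1})\le CR^{1-m(1-q)}$, which is the first boxed identity with the same comparison constants.

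For the scale-zero face, I read ``scale-zero'' as meaning that the realization cost carries no power of $R$. In that case $\mathcal C_m(R,R^{q-1})\asymp 1$ uniformly as $R\to0$ (or $R\to\infty$, whichever regime the mesoscopic realization concerns).

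\begin{itemize}
\item Since $R^{\alpha}\asymp1$ on an unbounded range of $R$ forces $\alpha=0$ (otherwise $R^\alpha$ tends to $0$ or $\infty$, contradicting two-sided bounds), the exponent must vanish. That is, $1-m(1-q)=0$, i.e.\ $m(1-q)=1$.
\item Conversely, if $m(1-q)=1$, the first part gives $\mathcal C_m\asymp1$.
\item For $m\neq0$, solving gives $q_c(m)=1-1/m$.
\end{itemize}

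I would also note the sign structure. For $m>0$, the cost decays (super-critical realization is cheap) when $q>q_c$, since the exponent $1-m(1-q)$ is then positive and the cost tends to $0$ as $R\to0$. It grows when $q<q_c$.

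The only step needing care, rather than algebra, is the converse ``$R^\alpha\asymp1\Rightarrow\alpha=0$''. This requires the comparison constants to be uniform over a sequence of scales tending to $0$ or $\infty$. I would state explicitly that uniformity is part of the hypothesis $\asymp$, and exclude the degenerate case $m=0$, where $q_c$ is undefined and the cost is $\asymp R$ for every $q$.
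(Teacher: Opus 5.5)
Your proposal is correct and takes essentially the same approach as the paper: substitute $D=R^{q-1}$ to obtain the exponent $1-m(1-q)$, then set it to zero to get $q_c(m)=1-1/m$. The paper assumes $m>0$ and records the same decay/order-one/divergence trichotomy as $R\downarrow0$, while your tracking of the comparison constants, the converse $R^\alpha\asymp1\Rightarrow\alpha=0$, and the exclusion of $m=0$ are minor refinements of that argument.
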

\begin{proof}
The statement is a direct homogeneity calculation, but we spell it out because the sign of the resulting exponent is used repeatedly later in the paper.

By hypothesis the physical cost has the form
\[
 \mathcal C_m(R,D)\asymp R D^m.
\]
The mesoscopic relation $D=R^{q-1}$ therefore gives
\[
 D^m=(R^{q-1})^m=R^{m(q-1)}=R^{-m(1-q)}.
\]
Multiplying by the leading factor $R$ yields
\[
 \mathcal C_m(R,R^{q-1})
 \asymp R^{1+m(q-1)}
 =R^{1-m(1-q)}.
\]
Thus the behavior as $R\downarrow0$ is determined entirely by
\[
 \alpha_m(q):=1-m(1-q).
\]
If $\alpha_m(q)>0$, then $R^{\alpha_m(q)}\to0$; if $\alpha_m(q)=0$, the cost remains of order one; and if $\alpha_m(q)<0$, the cost diverges.  The scale-zero face is consequently obtained by solving
\[
 1-m(1-q)=0.
\]
Since $m>0$, this is equivalent to
\[
 1-q=\frac1m,
 \qquad
 q=1-\frac1m.
\]
This proves both displayed formulas and the trichotomy used in the sequel.
\end{proof}
\begin{remark}
The values $4/5,6/7,8/9,12/13,27/29,\ldots$ are instances of \cref{thm:face} with different realization degrees $m$; none is a universal donor exponent.
\end{remark}

\section{Temporal Floquet/Hodge face}
Let $A:\R\to\mathfrak{sl}(2,\R)$ be $L$-periodic, bounded, and mean zero. For a sequence $\rho_N=T/(NL)$ define
\[
C_{\rho}(t)=\rho^{-\alpha}A(t/\rho),\qquad Z_\rho'=C_\rho Z_\rho,\qquad Z_\rho(0)=I.
\]
Set
\[
\Omega_2[A]=\frac12\int_0^L\int_0^{s_1}[A(s_1),A(s_2)]\,\dd s_2\,\dd s_1.
\]
\medskip
\noindent\textbf{Mean-zero temporal critical face.}
\begin{theorem}\label{thm:temporal}
If $\alpha<1/2$, then $Z_\rho(T)\to I$. If $\alpha=1/2$, then
\[
\boxed{Z_\rho(T)\longrightarrow \exp\!\left(\frac{T}{L}\Omega_2[A]\right).}
\]
For $2\times2$ trace-free $\Omega_2[A]$, this limiting return is hyperbolic precisely when
\[
\boxed{\det\Omega_2[A]<0.}
\]
\end{theorem}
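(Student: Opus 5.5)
The plan is to rescale time so the fast oscillation becomes a long computation over many periods, and then use averaging (Magnus expansion) one period at a time.

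\emph{Change of variables.} Put $s=t/\rho$ and $W(s)=Z_\rho(\rho s)$. Then
\[
W'(s)=\epsilon A(s)W(s),\qquad \epsilon:=\rho^{1-\alpha}.
\]
Since $T/\rho=NL$, we have $Z_\rho(T)=W(NL)=P_\epsilon^N$, where $P_\epsilon$ is the monodromy of $W'=\epsilon AW$ over one period $[0,L]$. The equation has $L$-periodic coefficients, so the total flow is the $N$-fold power of the one-period map.

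\emph{One-period expansion.} Because $A$ is bounded, the Magnus series converges for small $\epsilon$, giving $P_\epsilon=\exp(M_\epsilon)$ with
\[
M_\epsilon=\epsilon\int_0^L A+\epsilon^2\Omega_2[A]+O(\epsilon^3).
\]
The implied constant depends only on $\|A\|_\infty$ and $L$. The mean-zero hypothesis kills the first-order term, so $M_\epsilon=\epsilon^2\Omega_2[A]+O(\epsilon^3)$.

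\emph{Summing over $N$ periods.} Since $P_\epsilon^N=\exp(NM_\epsilon)$ exactly, it suffices to compute
\[
NM_\epsilon=N\epsilon^2\Omega_2+O(N\epsilon^3).
\]
With $\rho=T/(NL)$ we get $N=T/(L\rho)$ and $N\epsilon^2=(T/L)\rho^{1-2\alpha}$. Also $N\epsilon^3=(T/L)\rho^{2-3\alpha}$, which tends to $0$ whenever $\alpha<2/3$.
\begin{itemize}[label={}]
\item If $\alpha<1/2$, then $N\epsilon^2\to0$ as well, so $NM_\epsilon\to0$ and $Z_\rho(T)\to I$.
\item If $\alpha=1/2$, then $NM_\epsilon\to (T/L)\Omega_2[A]$, and continuity of $\exp$ gives the boxed limit.
\end{itemize}

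\emph{Main obstacle.} The delicate step is the uniform cubic remainder bound in the one-period Magnus expansion, because it is multiplied by $N\to\infty$. I would prove it directly rather than through the full series. Iterate the Dyson expansion
\[
P_\epsilon=I+\epsilon\int_0^L A+\epsilon^2\int\!\!\int_{s_2<s_1}A(s_1)A(s_2)+R_3,\qquad \|R_3\|\le C\epsilon^3,
\]
with $C$ depending on $\|A\|_\infty$ and $L$. Then take the matrix logarithm near $I$. The quadratic part of $\log P_\epsilon$ is
\[
\epsilon^2\Bigl(\iint A(s_1)A(s_2)-\tfrac12\bigl(\textstyle\int A\bigr)^2\Bigr)=\epsilon^2\Omega_2,
\]
after symmetrizing and using $\int_0^L A=0$. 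Note also that $\epsilon\to0$ in both regimes, because $1-\alpha>0$ and $\rho\to0$.

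\emph{Hyperbolicity criterion.} Here $\Omega:=\Omega_2[A]\in\mathfrak{sl}(2,\R)$, being a combination of commutators. Cayley--Hamilton gives $\Omega^2=-\det(\Omega)\,I$, so the eigenvalues of $\Omega$ are $\pm\sqrt{-\det\Omega}$.
\begin{itemize}[label={}]
\item If $\det\Omega<0$, the eigenvalues are real, nonzero and opposite. Then $\exp(\tfrac TL\Omega)$ has eigenvalues $e^{\pm\lambda}$ with $\lambda\neq0$, so it is hyperbolic with trace greater than $2$.
\item If $\det\Omega>0$, the eigenvalues are $\pm i\lambda$, and the return is elliptic with $|\mathrm{tr}|\le2$.
\item If $\det\Omega=0$, $\Omega$ is nilpotent, and the return is parabolic or the identity.
\end{itemize}
This yields the equivalence stated for the limiting return.
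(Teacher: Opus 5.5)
Your proposal is correct and follows essentially the same route as the paper: rescale to the fast variable with small parameter $\rho^{1-\alpha}$, use the mean-zero condition to reduce the one-period Magnus logarithm to $\rho^{2-2\alpha}\Omega_2[A]+O(\rho^{3-3\alpha})$, take the $N$th power of the monodromy, and read off hyperbolicity from $\Omega^2=-\det(\Omega)\,I$. Two of your steps are slightly cleaner versions of the paper's. You write $Z_\rho(T)=\exp(NM_\epsilon)$ exactly and then apply continuity of $\exp$, instead of taking $\log Z_\rho(T)$ as the paper does. You also check the cubic remainder directly via the Dyson expansion and the matrix logarithm, rather than appealing to convergence of the Magnus series.
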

\begin{proof}
We separate the calculation over one fast period from the accumulation over the $N$ repeated periods.

Fix one interval $[0,\rho L]$ and introduce the fast variable $s=t/\rho$.  The equation becomes
\[
 \frac{dZ}{ds}
 =\rho^{1-\alpha}A(s)Z.
\]
Hence the small parameter in the one-period problem is
\[
 \varepsilon_\rho:=\rho^{1-\alpha}.
\]
Because $A$ is bounded, the Magnus expansion is absolutely convergent for all sufficiently small $\rho$.  The logarithm of the one-period monodromy is
\[
 \mathfrak M_\rho
 =\mathfrak M_{1,\rho}+\mathfrak M_{2,\rho}+\mathfrak R_{3,\rho},
\]
where
\[
 \mathfrak M_{1,\rho}
 =\varepsilon_\rho\int_0^L A(s)\,ds=0
\]
by the mean-zero assumption.  The second Magnus term is
\[
 \mathfrak M_{2,\rho}
 =\frac{\varepsilon_\rho^2}{2}
   \int_0^L\int_0^{s_1}[A(s_1),A(s_2)]\,ds_2\,ds_1
 =\rho^{2-2\alpha}\Omega_2[A].
\]
Every term of order $j\ge3$ is bounded by a constant, depending only on $L$ and $\|A\|_\infty$, times $\varepsilon_\rho^j$.  Therefore
\[
 \|\mathfrak R_{3,\rho}\|
 \le C\varepsilon_\rho^3
 =C\rho^{3-3\alpha}.
\]
Thus the one-period monodromy $M_\rho$ satisfies
\[
 \log M_\rho
 =\rho^{2-2\alpha}\Omega_2[A]
   +O(\rho^{3-3\alpha}).
\]

The coefficient $C_\rho(t)=\rho^{-\alpha}A(t/\rho)$ is $\rho L$-periodic.  Consequently the monodromy over $N=T/(L\rho)$ periods is exactly
\[
 Z_\rho(T)=M_\rho^N.
\]
For small $\rho$, $M_\rho$ is close to the identity, so the same logarithm branch may be used on every factor and
\[
 \log Z_\rho(T)
 =N\log M_\rho.
\]
Substituting $N=T/(L\rho)$ gives
\[
 \log Z_\rho(T)
 =\frac TL\rho^{1-2\alpha}\Omega_2[A]
   +O(\rho^{2-3\alpha}).
\]
If $\alpha<1/2$, then $1-2\alpha>0$ and also $2-3\alpha>1/2>0$, so both terms tend to zero.  Hence
\[
 \log Z_\rho(T)\to0,
 \qquad
 Z_\rho(T)\to I.
\]
If $\alpha=1/2$, then the leading power is $\rho^0=1$ and the remainder has size $O(\rho^{1/2})$.  Thus
\[
 \log Z_\rho(T)
 \longrightarrow \frac TL\Omega_2[A],
\]
and continuity of the matrix exponential gives
\[
 Z_\rho(T)
 \longrightarrow
 \exp\!\left(\frac TL\Omega_2[A]\right).
\]

It remains to identify hyperbolicity.  If $M\in\mathfrak{sl}(2,\R)$, then $\operatorname{tr}M=0$ and its characteristic polynomial is
\[
 p_M(\lambda)=\lambda^2+\det M.
\]
When $\det M<0$, the eigenvalues are the two distinct real numbers
\[
 \lambda_\pm=\pm\sqrt{-\det M}.
\]
Therefore $e^M$ has the distinct positive eigenvalues $e^{\lambda_+}$ and $e^{\lambda_-}$, one larger than one and one smaller than one, so $e^M$ is hyperbolic.  When $\det M\ge0$, $M$ has either a double zero eigenvalue or a purely imaginary conjugate pair, and $e^M$ has no real expanding/contracting pair.  Applying this to $M=(T/L)\Omega_2[A]$ proves the final assertion.
\end{proof}

\section{Native source/contact trace and homogeneity five}
Work in a descendant-normalized collar
\[
C_{R,d}=\{R-d\le r\le R+d\},\qquad 0<d\le R/2,
\]
and define
\[
\mu(z,t)=\int_{R-d}^{R+d}F(r,z,t)^2\,\dd r\ge0.
\]
For a nonzero tangential Fourier mode $k$, assume the native sheet is $\sigma_k=ik\mu_k$ and the decaying two-sided transmission law at zero temporal frequency is $\sigma_k=2\nu|k|f_k$. Then
\[
f_k=\frac{i\,\operatorname{sgn}k}{2\nu}\mu_k
\]
and the weighted trace energy
\[
\mathcal E_k=2\nu R^3|k|\int_I|f_k|^2\,\dd t
=\frac{R^3|k|}{2\nu}\int_I|\mu_k|^2\,\dd t.
\]
The same estimates hold up to fixed constants on the parabolic band $|\omega|\lesssim\nu k^2$; faster time frequencies are a separate rate output.

\medskip
\noindent\textbf{Trace-to-quartic cargo.}
\begin{lemma}\label{lem:tracecargo}
Let
\[
Z_4=\int_I\int_{C_{R,d}}rF^4\,\dd r\,\dd z\,\dd t.
\]
Then
\[
\boxed{\mathcal E_k\le\frac{2R^2|k|d}{\nu}Z_4.}
\]
If $d\le c/|k|$, then
\[
\boxed{\mathcal E_k\le\frac{2cR^2}{\nu}Z_4.}
\]
\end{lemma}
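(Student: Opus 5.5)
Starting from the identity already recorded before the lemma,
\[
\mathcal E_k=\frac{R^3|k|}{2\nu}\int_I|\mu_k|^2\,\dd t,
\]
the plan is to bound $|\mu_k|$ by the $z$-mean of $\mu$, then apply Cauchy--Schwarz twice: once in $r$ across the collar, and once in $z$. The $r$-step should produce the factor $d$, and the radial weight $r\ge R/2$ on $C_{R,d}$ should trade one power of $R$ for the measure $r\,\dd r$.

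\textbf{Step 1 (Fourier coefficient).} I use the normalization $\mu_k(t)=\frac1{|P|}\int_P \mu(z,t)e^{-ikz}\,\dd z$ on the tangential period $P$, or the unnormalized one if that is the paper's convention. Since $\mu\ge0$, we have $|\mu_k|\le \frac{1}{|P|}\int_P\mu\,\dd z$. Cauchy--Schwarz in $z$ then gives $|\mu_k|^2\le \frac1{|P|}\int_P\mu^2\,\dd z$. With the unnormalized convention the constant differs by a power of $|P|$. In that case I would use the localized form in which the $z$-integral of $Z_4$ is over the same window, so that $\int|\mu_k|^2\lesssim\int\mu^2\,\dd z$ via Plancherel. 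Plancherel is the cleanest route, because $\sum_k|\mu_k|^2=\int\mu^2$ bounds each single mode.

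\textbf{Step 2 (collar Cauchy--Schwarz).} Since $\mu=\int_{R-d}^{R+d}F^2\,\dd r$,
\[
\mu^2\le 2d\int_{R-d}^{R+d}F^4\,\dd r\le \frac{2d}{R-d}\int_{R-d}^{R+d} rF^4\,\dd r\le \frac{4d}{R}\int rF^4\,\dd r,
\]
using $r\ge R-d\ge R/2$. Integrating in $z$ and $t$ gives $\int_I\int\mu^2\,\dd z\,\dd t\le \frac{4d}{R}Z_4$. Combining this with the trace identity gives
\[
\mathcal E_k\le \frac{R^3|k|}{2\nu}\cdot\frac{4d}{R}Z_4=\frac{2R^2|k|d}{\nu}Z_4,
\]
which is exactly the first boxed constant. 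This confirms that Plancherel with unit normalization is the intended convention.

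\textbf{Step 3 (thin collar).} The second bound is immediate: substitute $|k|d\le c$.

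\textbf{Main obstacle.} The only delicate point is the Fourier normalization in Step 1. The constant $2$ matches only if $\int_I|\mu_k|^2\,\dd t\le\int_I\int\mu^2\,\dd z\,\dd t$, i.e.\ if the single-mode Bessel/Plancherel inequality holds with constant one. I would state that convention explicitly. The argument also needs $d\le R/2$, which holds by the definition of the collar.
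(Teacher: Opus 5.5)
Your proposal is correct and follows the paper's proof essentially step for step. The paper likewise fixes the $L^2$-normalized tangential Fourier convention so that $|\mu_k(t)|^2\le\|\mu(\cdot,t)\|_{L^2_z}^2$, applies Cauchy--Schwarz across the radial interval of length $2d$, uses $r\ge R-d\ge R/2$ to insert the weight $r$, and substitutes into the trace identity before using $|k|d\le c$; your averaged-coefficient alternative in Step 1, which uses $\mu\ge0$ and carries a factor $1/|P|$, is unnecessary once that convention is fixed, as you yourself concluded.
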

\begin{proof}
Fix a time $t$.  With the usual $L^2$-normalized Fourier convention in the tangential variable, a single Fourier coefficient is bounded by the full $L^2_z$ norm, so
\[
 |\mu_k(t)|^2\le \|\mu(\cdot,t)\|_{L_z^2}^2.
\]
For each $z$, the definition of $\mu$ and Cauchy--Schwarz on the radial interval of length $2d$ give
\[
 \begin{aligned}
 |\mu(z,t)|^2
 &=\left|\int_{R-d}^{R+d}F(r,z,t)^2\,dr\right|^2\\
 &\le
 \left(\int_{R-d}^{R+d}1\,dr\right)
 \left(\int_{R-d}^{R+d}F(r,z,t)^4\,dr\right)\\
 &=2d\int_{R-d}^{R+d}F(r,z,t)^4\,dr.
 \end{aligned}
\]
Integrating this estimate in $z$ yields
\[
 |\mu_k(t)|^2
 \le2d\int_{R-d}^{R+d}\int_{\R}F^4\,dz\,dr.
\]
Now $d\le R/2$, hence $r\ge R-d\ge R/2$ on the collar.  Therefore
\[
 \int_{R-d}^{R+d}\int F^4\,dz\,dr
 \le \frac2R
 \int_{R-d}^{R+d}\int rF^4\,dz\,dr.
\]
Combining the last two displays gives
\[
 |\mu_k(t)|^2
 \le \frac{4d}{R}
 \int_{R-d}^{R+d}\int rF^4\,dz\,dr.
\]
Insert this into the exact trace-energy identity
\[
 \mathcal E_k
 =\frac{R^3|k|}{2\nu}\int_I|\mu_k(t)|^2\,dt.
\]
We obtain
\[
 \begin{aligned}
 \mathcal E_k
 &\le \frac{R^3|k|}{2\nu}\frac{4d}{R}
 \int_I\int_{C_{R,d}}rF^4\,dr\,dz\,dt\\
 &=\frac{2R^2|k|d}{\nu}Z_4,
 \end{aligned}
\]
which is the first claim.  If $d\le c/|k|$, then $|k|d\le c$ and hence
\[
 \mathcal E_k\le\frac{2cR^2}{\nu}Z_4.
\]
No estimate on derivatives of $F$ is used; the argument depends only on the radial thickness of the collar and the exact transmission formula.
\end{proof}

\medskip
\noindent\textbf{Bounded-circulation source/contact ceiling.}
\begin{theorem}\label{thm:sourcecontact}
Assume $|\Gamma|\le\Gamma_*$ on the collar, its normalized tangential-time face area is at most $A_\Sigma$, and $d\le c/|k|$. Then
\[
\boxed{
\mathcal E_k\le C\frac{c^2A_\Sigma\Gamma_*^4}{\nu|k|R^5}.
}
\]
Consequently any fixed trace-energy floor $\mathcal E_k\ge e_*>0$ forces
\[
\boxed{|k|R^5\le C\frac{c^2A_\Sigma\Gamma_*^4}{\nu e_*}.}
\]
Thus on a uniformly buffered face $R\ge R_0>0$, $|k|\to\infty$ kills the native response; if the response persists while $|k|\to\infty$, then
\[
\boxed{R\lesssim |k|^{-1/5}.}
\]
\end{theorem}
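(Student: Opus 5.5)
The plan is to combine \cref{lem:tracecargo} with a pointwise bound on $F$ coming from bounded circulation, and then read off the consequences algebraically. First, since $d\le c/|k|$, the second boxed estimate of \cref{lem:tracecargo} gives
\[
\mathcal E_k\le\frac{2cR^2}{\nu}Z_4,\qquad Z_4=\int_I\int_{C_{R,d}}rF^4\,\dd r\,\dd z\,\dd t .
\]
Everything then reduces to estimating $Z_4$ using only $|\Gamma|\le\Gamma_*$, with no derivatives of $F$.

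Second, I would use the exact relation $F=u^\theta/r=\Gamma/r^2$. This gives the pointwise bound $rF^4=\Gamma^4/r^7\le\Gamma_*^4 r^{-7}$ on the collar. As in the proof of \cref{lem:tracecargo}, $d\le R/2$ forces $r\ge R/2$ on $C_{R,d}$, so $rF^4\le 2^7\Gamma_*^4R^{-7}$ there. Integrating, the radial extent contributes a factor $2d$, and the tangential--time extent contributes at most $A_\Sigma$. I will read the hypothesis on the normalized face area as bounding the $(z,t)$-measure of the region over which the collar integral is taken; this is exactly the role it must play. Hence
\[
Z_4\le 2^8\,d\,A_\Sigma\,\Gamma_*^4R^{-7}\le 2^8\,\frac{c\,A_\Sigma\Gamma_*^4}{|k|R^7},
\]
where the last step uses $d\le c/|k|$ a second time. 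This second use is the source of the factor $c^2$ and of the extra $|k|^{-1}$. Inserting this into the first display yields $\mathcal E_k\le C c^2A_\Sigma\Gamma_*^4/(\nu|k|R^5)$ with $C=2^9$, which is the first boxed claim.

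Third, the consequences are rearrangements. A floor $\mathcal E_k\ge e_*$ combined with the upper bound gives $e_*\le Cc^2A_\Sigma\Gamma_*^4/(\nu|k|R^5)$, which is the stated bound on $|k|R^5$. If $R\ge R_0>0$ is fixed, the right side of the first estimate is $O(|k|^{-1})\to0$, so the native response dies. If instead the floor persists along $|k|\to\infty$ with $c,A_\Sigma,\Gamma_*,\nu,e_*$ fixed, then $R^5\lesssim |k|^{-1}$, that is, $R\lesssim|k|^{-1/5}$.

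The only genuinely delicate point is the second step: interpreting ``normalized tangential-time face area'' so that the $z$- and $t$-integrations in $Z_4$ are controlled by $A_\Sigma$. I would make this explicit at the start, as $\int_I\int \mathbf 1_{\text{collar support}}\,\dd z\,\dd t\le A_\Sigma$, so that the Fourier normalization of \cref{lem:tracecargo} and the face-area normalization are consistent. The remaining steps are routine powers of $R$ and $d$.
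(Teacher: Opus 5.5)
Your proposal is correct and follows the paper's proof essentially step for step: the pointwise bound $rF^4=\Gamma^4/r^7\le 2^7\Gamma_*^4R^{-7}$ on the collar, integration over radial width $2d$ and face area $A_\Sigma$, the second inequality of \cref{lem:tracecargo}, a second use of $d\le c/|k|$ to produce the factor $c^2/|k|$, and then rearrangement for the floor bound and the $R\lesssim|k|^{-1/5}$ law. Your explicit constant $C=2^9$ and your explicit reading of the face-area hypothesis make concrete what the paper calls ``a fixed multiple of $A_\Sigma$.''
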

\begin{proof}
The proof is a combination of the pointwise circulation bound with the trace-to-quartic estimate.

Since $F=\Gamma/r^2$,
\[
 rF^4=\frac{\Gamma^4}{r^7}.
\]
On $C_{R,d}$ we have $r\ge R/2$, and therefore
\[
 rF^4\le 2^7\Gamma_*^4R^{-7}.
\]
The radial width of the collar is $2d$.  By the definition of the normalized tangential-time face-area bound $A_\Sigma$, the remaining $z,t$ integration contributes at most a fixed multiple of $A_\Sigma$.  Hence
\[
 Z_4
 \le C_1\Gamma_*^4 d A_\Sigma R^{-7},
\]
where $C_1$ is an absolute geometric constant for the chosen normalization.

By the second inequality of \cref{lem:tracecargo},
\[
 \mathcal E_k
 \le \frac{2cR^2}{\nu}Z_4.
\]
Substituting the preceding estimate gives
\[
 \mathcal E_k
 \le C_2\frac{c\Gamma_*^4dA_\Sigma}{\nu R^5}.
\]
Using once more $d\le c/|k|$ gives
\[
 \mathcal E_k
 \le C\frac{c^2A_\Sigma\Gamma_*^4}{\nu|k|R^5},
\]
which is the claimed ceiling.

If in addition $\mathcal E_k\ge e_*>0$, then comparison of the lower and upper bounds yields
\[
 e_*
 \le C\frac{c^2A_\Sigma\Gamma_*^4}{\nu|k|R^5}.
\]
Rearranging gives
\[
 |k|R^5
 \le C\frac{c^2A_\Sigma\Gamma_*^4}{\nu e_*}.
\]
If $R\ge R_0>0$ while $|k|\to\infty$, the left side diverges, contradicting this fixed upper bound.  Conversely, if a fixed response floor persists as $|k|\to\infty$, then
\[
 R^5\lesssim |k|^{-1},
\]
so
\[
 R\lesssim |k|^{-1/5}.
\]
This is the announced homogeneity-five axis-frequency law.
\end{proof}

\medskip
\noindent\textbf{Sharpness of the $1/5$ law.}
\begin{proposition}
With normalized viscosity and circulation cap, take
\[
R_N=N^{-1/5},\qquad d_N=N^{-1},
\]
and a smooth collar profile satisfying
\[
F_N^2\asymp R_N^{-4}\chi\!\left(\frac{r-R_N}{d_N}\right)(1+\varepsilon\cos Nz).
\]
Then $\Gamma_N=O(1)$ and the native trace energy is order one, while the quartic stock diverges like $N^{2/5}$ and the physical circulation cargo is $O(N^{-4/5})$. Thus the exponent $1/5$ is sharp in this native-sheet class, but the hostile carries a finer axis/$F^4$ concentration rather than a silent descendant-scale connector.
\end{proposition}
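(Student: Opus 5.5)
The proof is a direct computation with the explicit profile, checking each claimed quantity in the order stated. Set $\nu=1$ and $\Gamma_*\sim1$, and take $k=N$ as the tangential mode. Since $F_N^2\asymp R_N^{-4}$ on the collar, where $r\in[R_N-d_N,R_N+d_N]$ and $d_N=N^{-1}\ll R_N=N^{-1/5}$ so that $r\asymp R_N$, we get
\[
\Gamma_N^2=r^4F_N^2\asymp R_N^4R_N^{-4}=O(1).
\]
This is the circulation cap; it holds because $\chi$ is bounded and $0\le 1+\varepsilon\cos Nz\le 2$ for $\varepsilon<1$.

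Next I would compute the native sheet. By definition,
\[
\mu(z,t)=\int F_N^2\,dr\asymp R_N^{-4}d_N\Big(\int\chi\Big)(1+\varepsilon\cos Nz).
\]
Its $k=N$ Fourier coefficient over a normalized tangential window is therefore $\mu_N\asymp \varepsilon R_N^{-4}d_N$. Substituting into the exact trace formula $\mathcal E_k=\frac{R^3|k|}{2\nu}\int_I|\mu_k|^2\,dt$ on a unit time interval gives
\[
\mathcal E_N\asymp R_N^3\,N\,R_N^{-8}d_N^2=R_N^{-5}N\,N^{-2}=N\cdot N\cdot N^{-2}=1.
\]
So the trace energy is of order one. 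As a consistency check, the ceiling of \cref{thm:sourcecontact} with $c=|k|d=1$ gives $\mathcal E\lesssim 1/(|k|R^5)=1$, so the bound is saturated. This is precisely the sharpness claim: any $R_N\ll N^{-1/5}$ is forbidden, while $R_N=N^{-1/5}$ attains it.

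For the quartic stock I would compute
\[
Z_4=\int\!\!\int rF^4\asymp R_N\,R_N^{-8}\,d_N=R_N^{-7}N^{-1}=N^{7/5-1}=N^{2/5},
\]
which diverges as claimed. For the physical circulation cargo, which I interpret as $\int\!\!\int r\,\Gamma^2$ or an equivalent $\Gamma$-weighted collar mass, the integrand is $O(1)\cdot r$ over radial width $d_N$, giving $\asymp R_N d_N=N^{-1/5-1}$. Since this does not match the stated $N^{-4/5}$, I would instead fix the normalization so that the cargo is $\int\Gamma^2 r^{-1}\,dr\,dz\asymp d_N/R_N=N^{-4/5}$, i.e.\ the $\Gamma$ analogue of the $rF^4$ weight with the collar width measured relative to $R$.

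The main obstacle is that last identification. The term "physical circulation cargo" is not defined earlier in the paper, so the exponent $-4/5$ depends on choosing the right weight; I would select the weight that is scale-consistent with $\deg\Gamma=0$, namely the one for which the cargo equals collar width over radius, $d_N/R_N$. Beyond that, I would only check that the parabolic-band caveat applies, which holds because the profile is time-independent (zero temporal frequency). Finally, I would observe that $d_N=N^{-1}\ll R_N$ means the concentration sits at a scale finer than the descendant scale. This justifies the interpretive last sentence: the saturating example is carried by axis and $F^4$ concentration, with $Z_4\to\infty$, rather than by a bounded connector.
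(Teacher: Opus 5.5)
Your computation matches the paper's proof essentially line by line. You obtain $\Gamma_N^2=r^4F_N^2\asymp1$, $\mu_N\asymp d_NR_N^{-4}$, $\mathcal E_N\asymp 1/(NR_N^5)=1$ and $Z_{4,N}\asymp 1/(NR_N^7)=N^{2/5}$. Your circulation cargo of size $d_N/R_N=1/(NR_N)=N^{-4/5}$ is exactly the scale the paper asserts; the paper also leaves the cargo functional undefined, so your normalization choice reproduces its number.

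One correction to your saturation remark: you have the direction backwards. \cref{thm:sourcecontact} forbids a persistent order-one response when $R_N\gg N^{-1/5}$ (so that $NR_N^5\to\infty$), not when $R_N\ll N^{-1/5}$. The example is sharp because it attains the boundary $R\asymp|k|^{-1/5}$ of the allowed region $R\lesssim|k|^{-1/5}$.
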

\begin{proof}
All estimates are uniform for a fixed nonzero modulation amplitude $\varepsilon$ and a fixed smooth cutoff $\chi$.
On the collar $r\asymp R_N$, the defining relation gives
\[
 F_N^2\asymp R_N^{-4}.
\]
Since $\Gamma_N=r^2F_N$, we have
\[
 |\Gamma_N|^2=r^4F_N^2\asymp R_N^4R_N^{-4}\asymp1,
\]
so the circulation remains uniformly bounded.

The $k=N$ Fourier coefficient of the radial source sheet has size
\[
 \mu_{N,N}
 \asymp d_NR_N^{-4}
 =\frac1{NR_N^4}.
\]
The zero-frequency transmission law is linear, and with normalized viscosity it gives
\[
 |f_{N,N}|\asymp |\mu_{N,N}|.
\]
Consequently the weighted trace energy at frequency $N$ scales as
\[
 \mathcal E_N
 \asymp R_N^3N|f_{N,N}|^2
 \asymp R_N^3N\frac1{N^2R_N^8}
 =\frac1{NR_N^5}.
\]
Because $R_N=N^{-1/5}$, $NR_N^5=1$, and hence
\[
 \mathcal E_N\asymp1.
\]
Thus the trace response does not decay along this sequence.

For the quartic stock,
\[
 Z_{4,N}
 \asymp (\text{radial width})\times r\times F_N^4
 \asymp d_NR_N\,R_N^{-8}
 =\frac1{NR_N^7}.
\]
Again using $R_N=N^{-1/5}$ gives
\[
 Z_{4,N}\asymp N^{7/5-1}=N^{2/5}\to\infty.
\]
The circulation-cargo quantity used in this collar normalization has scale
\[
 M_{\Gamma,N}\asymp \frac1{NR_N},
\]
and therefore
\[
 M_{\Gamma,N}\asymp N^{-1+1/5}=N^{-4/5}\to0.
\]
The family therefore keeps an order-one native trace only by pushing the source into a thinner and increasingly oscillatory axis-scale structure.  It does not contradict \cref{thm:sourcecontact}: it realizes precisely the $R\asymp |k|^{-1/5}$ escape allowed by that theorem, while the divergent $Z_{4,N}$ records the finer $F^4$ concentration.
\end{proof}

\section{Three scoped preparation thresholds}
The next three exponents arise from different physical regimes.  To keep the
geometry unambiguous, throughout this section we distinguish the physical
donor distance from its child-scale normalization:
\[
 D_{\rm phys}=R^q,
 \qquad
 D_{\rm rel}:=\frac{D_{\rm phys}}R=R^{q-1}.
\]
Thus $D_{\rm phys}\downarrow0$ as $R\downarrow0$, whereas $D_{\rm rel}\to\infty$
for every $q<1$.  The latter is the large dimensionless parameter in the
preparation estimates.

\medskip
\noindent\textbf{Cancellation-energy split at $q=8/9$.}
\begin{proposition}\label{prop:q89}
Assume a fixed-shape mesoscopic serving block with physical energy
$E_0\asymp R^{7q-6}$ is amplified by the sharp signed cancellation factor
$\varepsilon^{-1}$, where
\[
D_{\rm phys}=R^q,\qquad
\varepsilon=\frac{R}{D_{\rm phys}}=D_{\rm rel}^{-1}=R^{1-q},
\]
and assume the amplified signed shape has a nonzero scale-uniform physical
velocity/$H^{-1}$ norm.  Then
\[
\boxed{E_{\rm cancel}\asymp R^{9q-8}.}
\]
Consequently
\[
\boxed{\begin{array}{c|c}
6/7<q<8/9&E_{\rm cancel}\to\infty,\\
q=8/9&E_{\rm cancel}\asymp1,\\
8/9<q<1&E_{\rm cancel}\to0.
\end{array}}
\]
\end{proposition}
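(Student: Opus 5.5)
The plan is to reduce the statement to a homogeneity calculation in the spirit of \cref{thm:face}. The one genuinely structural input is that energy is quadratic in the velocity amplitude, so the signed cancellation factor $\varepsilon^{-1}$ enters the energy squared. First I will write the amplified physical velocity field of the serving block as $\varepsilon^{-1}$ times the velocity of the fixed-shape block. By hypothesis the unamplified block carries energy $E_0\asymp R^{7q-6}$. Call the fixed signed shape $\Phi$ and the normalized amplitude $a$, so that the velocity is $a\,\Phi$ in physical units and
\[
E_0\asymp a^2\|\Phi\|^2_{\rm vel/H^{-1}}.
\]
The energy of the amplified block is then
\[
E_{\rm cancel}=\varepsilon^{-2}a^2\|\Phi\|^2_{\rm vel/H^{-1}}.
\]

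The key step, and the one I expect to be the main obstacle, is the two-sided comparison
\[
E_{\rm cancel}\asymp \varepsilon^{-2}E_0 .
\]
The upper bound is immediate from linearity of the Hodge/Biot--Savart recovery and the quadratic form of the energy. The lower bound is where the hypothesis that the amplified signed shape has a nonzero scale-uniform physical velocity/$H^{-1}$ norm is used: this hypothesis excludes a further hidden cancellation, that is, a degeneration of the norm along the family $R\downarrow0$ that would lower the exponent. I would argue that, because the shape is fixed and only rescaled, the norm equals a fixed positive constant times the scaling factor already absorbed into $E_0$. Both constants in $\asymp$ are then independent of $R$. The sharpness of $\varepsilon^{-1}$ is what prevents the factor from being replaced by a smaller power.

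Next I substitute $\varepsilon=R^{1-q}$, which gives
\[
E_{\rm cancel}\asymp R^{7q-6}\,R^{-2(1-q)}=R^{9q-8}.
\]
The trichotomy then follows exactly as in the proof of \cref{thm:face}, applied to the exponent $\beta(q)=9q-8$. If $q<8/9$, then $\beta<0$ and $E_{\rm cancel}\to\infty$. If $q=8/9$, then $E_{\rm cancel}\asymp1$. If $q>8/9$, then $\beta>0$ and $E_{\rm cancel}\to0$.

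Finally, I would remark on the lower endpoint $6/7$. At $q=6/7$ the unamplified energy $E_0$ is itself of order one, so on $6/7<q<8/9$ the block is energetically negligible before cancellation ($E_0\to0$) but divergent after it. The range restriction to $q<1$ also ensures $\varepsilon\to0$, so $\varepsilon^{-1}$ is a genuine amplification.
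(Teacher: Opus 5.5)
Your proposal is correct and follows the paper's proof essentially verbatim: the $\varepsilon^{-1}$ amplitude enlargement contributes $\varepsilon^{-2}$ to any quadratic norm, so $E_{\rm cancel}\asymp E_0\varepsilon^{-2}=R^{7q-6}R^{-2(1-q)}=R^{9q-8}$, and the trichotomy is read off from the sign of $9q-8$. Your use of the nonzero scale-uniform norm hypothesis to exclude a signed shape whose normalized energy degenerates along with the cancellation is the same caveat the paper records at the end of its proof.
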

\begin{proof}
The calculation is elementary once the two distance variables have been
separated.  The relative separation is
\[
 D_{\rm rel}=\frac{D_{\rm phys}}R=R^{q-1},
\]
so the small ratio seen from the child scale is
\[
 \varepsilon=D_{\rm rel}^{-1}=R^{1-q}.
\]
The signed cancellation construction requires an amplitude enlargement by
$\varepsilon^{-1}$.  Kinetic energy and every other quadratic norm therefore
acquire the square of this factor:
\[
 \varepsilon^{-2}=R^{-2(1-q)}.
\]
Starting from $E_0\asymp R^{7q-6}$ gives
\[
 \begin{aligned}
 E_{\rm cancel}
 &\asymp E_0\varepsilon^{-2}\\
 &\asymp R^{7q-6}R^{-2(1-q)}\\
 &=R^{9q-8}.
 \end{aligned}
\]
The three alternatives follow from the sign of $9q-8$.  This also shows why
this particular energy argument is sharp only inside its fixed-shape,
uncancelled class: it says nothing against a shape whose normalized quadratic
energy degenerates simultaneously with the cancellation.
\end{proof}
\begin{remark}
The borderline $q=8/9$ is not excluded by this estimate.  Energy-near-null or
spectrally degenerating signed shapes are outside the fixed-shape hypothesis.
At $q=13/14$, one obtains $E_{\rm cancel}\asymp R^{5/14}\to0$.
\end{remark}

\medskip
\noindent\textbf{Fresh-tangent preparation pincer and $12/13$ threshold.}
\begin{theorem}\label{thm:q1213}
On the positive-descendant source-built fresh-tangent regime with bounded
deformation, finite marks, and relative donor distance
$D_{\rm rel}=R^{q-1}$, a fixed order-one fresh tangent requires
\[
N\gtrsim D_{\rm rel}^7,
\qquad
\frac{T_{\rm prep}}{T_R}\gtrsim D_{\rm rel}^6,
\qquad
\|\widetilde G^f\|_{L^2(A_{D_{\rm rel}})}^2\gtrsim D_{\rm rel}^7.
\]
On the uncancelled visibility branch,
\[
\boxed{
\mathcal D_{G,\rm prep}
\gtrsim R D_{\rm rel}^{13}
=R^{13q-12}.}
\]
More generally,
\[
\boxed{\begin{aligned}
\text{fresh source-built preparation}\Longrightarrow{}&
\mathcal D_{G,\rm prep}\gtrsim R D_{\rm rel}^{13}\\
&\vee\ \text{same-shell cancellation}\\
&\vee\ \text{unbounded deformation}\\
&\vee\ \text{boundary/frequency loss}.
\end{aligned}}
\]
Thus $q<12/13$ is excluded only on the uncancelled, bounded-deformation,
source-built regime; $q=12/13$ is critical, while $q>12/13$ is not excluded by
this payment.
\end{theorem}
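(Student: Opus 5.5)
The plan is to treat \cref{thm:q1213} the way \cref{prop:q89} was treated: as a homogeneity count carried out in the relative variable $D_{\rm rel}=R^{q-1}$, with each regime hypothesis accounting for one power. The three intermediate lower bounds should be derived in sequence and then multiplied.

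\emph{Step 1: counting marks.} First, bounded deformation and finitely many marks mean that each elementary source-built transfer moves an order-one fraction of the fresh-tangent visibility over a fixed child-scale neighbourhood. Transport from the donor annulus $A_{D_{\rm rel}}$ back to the child scale then loses a factor $D_{\rm rel}^{-1}$ per spatial dimension of the annulus volume, plus the decay of the Hodge kernel. The plan is to use the $\Delta_5$ structure, whose effective dimension is $5$. In that structure the kernel $G\mapsto b$ decays like $D^{-4}$ in $\deg G=3$ units, and the annular volume counts as $D^{3}$. Together these give an attenuation $D_{\rm rel}^{-7}$ per mark. An order-one tangent therefore needs $N\gtrsim D_{\rm rel}^7$ marks. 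I would fix the exponent by matching $7$ against the energy exponent $7q-6$ of \cref{prop:q89}, which is the same fixed-shape block written in relative units.

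\emph{Step 2: time and $L^2$ budget.} Second, each mark costs at least one child time $T_R$. Bounded deformation lets only $D_{\rm rel}$ marks be processed in parallel within one relative shell. Hence $T_{\rm prep}/T_R\gtrsim N/D_{\rm rel}=D_{\rm rel}^6$. Third, in the uncancelled branch the mark contributions add with a common sign. Their $L^2(A_{D_{\rm rel}})$ norms therefore add at least linearly in $N$, which gives $\|\widetilde G^f\|^2\gtrsim D_{\rm rel}^7$.

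\emph{Step 3: assembling the dissipation.} The dissipation during preparation is bounded below by the rescaled enstrophy density, times the preparation duration, times the physical normalization $R$. This yields
\[
\mathcal D_{G,\rm prep}\gtrsim R\,D_{\rm rel}^{7}D_{\rm rel}^{6}=R\,D_{\rm rel}^{13}=R^{1+13(q-1)}=R^{13q-12}.
\]
\cref{thm:face} with $m=13$ then gives the trichotomy at $q_c=12/13$: divergence for $q<12/13$, order one at equality, and decay above.

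\emph{Step 4: the disjunction.} The displayed alternative follows by contraposition. Each of Steps 1--3 uses exactly one hypothesis, and its failure is one listed branch. Dropping the common sign in Step 2 is the same-shell cancellation branch. Dropping the per-mark transport bound in Step 1 is the unbounded deformation branch. Losing the child-scale localization, or letting the parabolic band fail as in the remark after the trace identity, is the boundary/frequency loss branch.

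\emph{Main obstacle.} The hardest part is Step 1, namely justifying the exponent $7$ rigorously from the Hodge law rather than by dimensional bookkeeping. I would first try a pointwise kernel bound for the axisymmetric Biot--Savart law at relative distance $D_{\rm rel}$, combined with a counting argument on annular shells. If that does not close, I would state the per-mark attenuation as an explicit regime hypothesis.
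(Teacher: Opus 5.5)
Your final assembly $R\,D_{\rm rel}^{7}D_{\rm rel}^{6}=R^{13q-12}$ and the use of \cref{thm:face} with $m=13$ match the paper. However, the two inputs that carry the content are not derived, and Step~1 as written does not close.

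The product of a $D_{\rm rel}^{-4}$ kernel decay and a $D_{\rm rel}^{3}$ volume is $D_{\rm rel}^{-1}$, not $D_{\rm rel}^{-7}$. To reach $-7$ you would have to divide by the volume, and a three-dimensional volume sits oddly beside a five-dimensional kernel. Calibrating the exponent against $E_0\asymp R^{7q-6}$ is circular, since that is a hypothesis of \cref{prop:q89}. Adopting a per-mark attenuation as a regime hypothesis would simply assume the first conclusion.

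The missing idea is that \emph{freshness is a difference}. At distance $D_{\rm rel}$, the detectors $K_2*W_0$ and $K_2*W_1$ of two consecutive positive cargos both equal $Z_jK_2$ up to relative error $O(L/D_{\rm rel})$. Here $K_2=\partial_{zz}\mathcal N_5$ is homogeneous of degree $-5$, so $\|K_2\|^2_{L^2(|x|>D_{\rm rel})}\asymp D_{\rm rel}^{-5}$. After matching masses the two columns are nearly parallel, and the small Gram eigenvalue is $D_{\rm rel}^{-5}(L/D_{\rm rel})^2\asymp D_{\rm rel}^{-7}$. Inverting it is what forces $\|\widetilde G^f\|^2_{L^2(A_{D_{\rm rel}})}\gtrsim D_{\rm rel}^{7}$.

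The count $N\gtrsim D_{\rm rel}^7$ is a separate budget built on the same cancellation:
\begin{itemize}
\item The difference detector has $C^1$ size $\varepsilon=D_{\rm rel}^{-1}$, so $|\mathcal R_I|\le C\Gamma_\nu^2|I|\varepsilon e^{C\mathcal A_{\rm def}}$.
\item The turnover rate is $m\asymp\kappa_*D_{\rm rel}^3$, with $|I|\lesssim N/m$, and a fixed response requires $|\mathcal R_I|\gtrsim cm$.
\item Together these give $Ne^{C\mathcal A_{\rm def}}\gtrsim\Gamma_\nu^{-2}m^2\varepsilon^{-1}\asymp D_{\rm rel}^{7}$.
\end{itemize}
This is where bounded deformation actually enters, by absorbing $e^{C\mathcal A_{\rm def}}$. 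Your claim that each mark costs $T_R$ but only $D_{\rm rel}$ marks run in parallel likewise has no source. The paper's $T_{\rm prep}/T_R\gtrsim ND_{\rm rel}^{-1}$ comes from one turnover occupying a fraction $D_{\rm rel}^{-1}$ of a child clock.

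Your $L^2$ step and Step~4 also misplace the cancellation alternative. Positivity gives superadditivity of squared norms, $\|\sum f_i\|_2^2\ge\sum\|f_i\|_2^2$, not linear growth of the norms themselves. Even that would need each mark to deposit order-one normalized mass in $A_{D_{\rm rel}}$, which nothing in your argument supplies.

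More importantly, both your bound and the paper's concern the fresh component, while $\mathcal D_{G,\rm prep}$ is the dissipation of the \emph{total} current. Step~3 therefore needs the total to retain a fixed fraction of the fresh current. When it does not, the triangle inequality produces an older or incoming component of comparable size on the same donor shell; that is the same-shell cancellation branch. You identify that branch with internal cancellation among fresh marks, which does not cover a large fresh current masked by old current. Your contraposition therefore misses exactly the failure mode the alternative is there to record.
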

\begin{proof}
We give the preparation calculation in full because the seventh and thirteenth
powers arise from two different mechanisms: the first from the far-field
freshness detector and the second from the amount of physical time for which
that detector must remain active.

Set
\[
 \varepsilon:=D_{\rm rel}^{-1}=\frac{R}{D_{\rm phys}}.
\]
Let $W_0,W_1\ge0$ denote the two consecutive normalized positive cargo profiles,
supported in a fixed ball $B_L$, and normalize their masses so that they stay
between two fixed positive constants.  For the second Hodge derivative
\[
 K_2=\partial_{zz}\mathcal N_5,
 \qquad \mathcal N_5(x)=\frac1{8\pi^2}|x|^{-3},
\]
write
\[
 \Theta_j=K_2*W_j.
\]
Taylor expansion at distance $D_{\rm rel}\gg L$ gives
\[
 \Theta_j=Z_jK_2+O\!\left(\frac{L}{D_{\rm rel}}\right)Z_jK_2
 \quad\hbox{in }L^2(A_{D_{\rm rel}}),
\]
where $Z_j=\int W_j$.  Since $K_2$ is homogeneous of degree $-5$ in five
space dimensions,
\[
 \|K_2\|_{L^2(|x|>D_{\rm rel})}^2\asymp D_{\rm rel}^{-5}.
\]
After the leading masses are matched, the angle between the two detector
columns is therefore only of order $L/D_{\rm rel}$.  Equivalently, the small
eigenvalue of their two-column Gram matrix is of order
\[
 D_{\rm rel}^{-5}
 \left(\frac{L}{D_{\rm rel}}\right)^2
 \asymp D_{\rm rel}^{-7},
\]
with $L$ fixed.  Hence the inverse Gram matrix has norm of order at least
$D_{\rm rel}^{7}$.  A response matrix with a fixed nonzero smallest singular
value can consequently be produced only by an owner whose normalized
$L^2$-mass satisfies
\[
 \|\widetilde G^f\|_{L^2(A_{D_{\rm rel}})}^2
 \gtrsim D_{\rm rel}^{7}.
\]
This is the origin of the seventh power.

We next quantify how long a genuinely source-born owner needs in order to
acquire that response.  Let
\[
 \Gamma_\nu:=\frac{\|\Gamma\|_\infty}{\nu},
 \qquad
 \mathcal A_{\rm def}:=\int_I\|\nabla b(t)\|_\infty\,dt
\]
on the preparation interval $I$.  A donor turnover has normalized rate
\[
 m\asymp \kappa_*D_{\rm rel}^{3};
\]
the power three is the degree of the rate-leading second-Hodge response after
child normalization.  The difference of the two positive cargo detectors has
one additional far-field cancellation and hence terminal $C^1$ size
$O(\varepsilon)$.  Transporting this difference detector backwards through the
actual drift gives the source-response estimate
\[
 |\mathcal R_I|
 \le C\Gamma_\nu^2|I|\,\varepsilon
 e^{C\mathcal A_{\rm def}}.
\]
If the fresh cohort is assembled from $N$ donor turnovers, then
$|I|\lesssim N/m$.  Reaching a fixed normalized fresh response means that
$|\mathcal R_I|\gtrsim c m$.  Therefore
\[
 m
 \lesssim
 \Gamma_\nu^2\frac{N}{m}\varepsilon e^{C\mathcal A_{\rm def}},
\]
and hence
\[
 N e^{C\mathcal A_{\rm def}}
 \gtrsim
 \Gamma_\nu^{-2}m^2\varepsilon^{-1}.
\]
Using $m\asymp\kappa_*\varepsilon^{-3}$ gives
\[
 N e^{C\mathcal A_{\rm def}}
 \gtrsim c\Gamma_\nu^{-2}\varepsilon^{-7}
 =c\Gamma_\nu^{-2}D_{\rm rel}^7.
\]
On the bounded-deformation branch, $\mathcal A_{\rm def}\le A_0$, all fixed
constants may be absorbed, yielding
\[
 N\gtrsim D_{\rm rel}^7.
\]
One donor turnover occupies a fraction $D_{\rm rel}^{-1}$ of a child diffusion
clock.  Consequently
\[
 \frac{T_{\rm prep}}{T_R}
 \gtrsim ND_{\rm rel}^{-1}
 \gtrsim D_{\rm rel}^{6}.
\]
This is the origin of the sixth power in the preparation age.

It remains to convert normalized current mass and normalized duration back to
physical dissipation.  In the chosen child normalization, one child diffusion
clock carrying normalized $G$-mass $M_G$ contributes physical
azimuthal-vorticity dissipation of order $R M_G$.  Therefore the last ramp
costs at least
\[
 \mathcal D_{G,\rm prep}
 \gtrsim
 R\,(D_{\rm rel}^7)(D_{\rm rel}^6)
 =R D_{\rm rel}^{13}.
\]
Since $D_{\rm rel}=R^{q-1}$,
\[
 R D_{\rm rel}^{13}
 =R^{1+13(q-1)}
 =R^{13q-12}.
\]
Thus the cost diverges for $q<12/13$, is scale invariant for $q=12/13$, and
can be Zeno-summable for $q>12/13$.

Finally, this physical lower bound applies only if the total current retains a
fixed fraction of the freshly generated current.  If it does not, the triangle
inequality forces an older or incoming component of comparable size on the
same donor shell.  That is the stated same-shell cancellation alternative.
Loss of the bounded-deformation or fixed localization hypotheses gives the
remaining deformation, boundary, contact, or frequency alternatives.  No such
failure is counted as dissipation payment.  This proves the theorem.
\end{proof}

\medskip
\noindent\textbf{Critical Lie-area preparation and $27/29$ threshold.}
\begin{theorem}\label{thm:q2729}
On the uncancelled, finite-age source-built, bounded-deformation, finite-mark
critical Lie-area regime, the critical Hodge efficiency strengthens the
preparation cost to
\[
\boxed{
\mathcal D_{\rm prep}^{\rm crit}
\gtrsim R D_{\rm rel}^{29/2}
=R^{(29q-27)/2}.}
\]
Hence $q<27/29$ is impossible in that regime.  At $q_0=13/14$,
\[
\boxed{\mathcal D_{\rm prep}^{\rm crit}\gtrsim R^{-1/28}\to\infty.}
\]
More generally, if the required fresh-tangent response efficiency is
$\chi=D_{\rm rel}^{\beta}$, then
\[
\boxed{
\mathcal D_{\rm prep}
\gtrsim R D_{\rm rel}^{13+3\beta},
\qquad
q_c(\beta)=\frac{12+3\beta}{13+3\beta}.}
\]
\end{theorem}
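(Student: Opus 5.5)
The plan is to rerun the preparation calculation in the proof of \cref{thm:q1213}, changing only the factor that encodes the required fresh-tangent response. In that proof the thirteenth power has three sources. The normalized current mass gives $D_{\rm rel}^7$, from the inverse Gram matrix of the two detector columns. The preparation age gives $D_{\rm rel}^6$: this is $N D_{\rm rel}^{-1}$ with $N\gtrsim m^2\varepsilon^{-1}$ and $m\asymp D_{\rm rel}^3$. The conversion to physical dissipation contributes the factor $R$. I will treat the general statement with efficiency $\chi=D_{\rm rel}^\beta$ first, and then specialize to the critical Lie-area value of $\beta$.

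\textbf{Step 1: the general law.} Requiring a normalized fresh response of size $\chi$ changes the balance $|\mathcal R_I|\gtrsim c\,m$ into $|\mathcal R_I|\gtrsim c\,\chi m$. The source-response estimate $|\mathcal R_I|\le C\Gamma_\nu^2|I|\varepsilon e^{C\mathcal A_{\rm def}}$ stays as it is, and so does $|I|\lesssim N/m$. This gives $N\gtrsim \chi m^2\varepsilon^{-1}$, i.e. an extra factor $D_{\rm rel}^{\beta}$ in $N$ and in $T_{\rm prep}/T_R$.

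The Gram step has to be redone as well. The owner must produce a response matrix whose smallest singular value is $\chi$ rather than order one. Since the quadratic mass scales with the square of the required amplitude, I expect this to contribute $\chi^2=D_{\rm rel}^{2\beta}$ to $\|\widetilde G^f\|^2$. Multiplying the three factors gives $R\,D_{\rm rel}^{7+2\beta}D_{\rm rel}^{6+\beta}=R D_{\rm rel}^{13+3\beta}$. Substituting $D_{\rm rel}=R^{q-1}$ turns this into $R^{(13+3\beta)q-(12+3\beta)}$, and setting the exponent to zero gives $q_c(\beta)=(12+3\beta)/(13+3\beta)$, exactly as in \cref{thm:face}.

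\textbf{Step 2: the critical value of $\beta$.} I need to show that the critical Hodge efficiency corresponds to $\beta=1/2$. Then $13+3\beta=29/2$ and $q_c=(27/2)/(29/2)=27/29$. The heuristic is as follows. The critical Lie area is the $\alpha=1/2$ face of \cref{thm:temporal}, where the second Magnus term stays of order one only if the coefficient amplitude scales like $\rho^{-1/2}$. Here the fast clock is one donor turnover, which occupies the fraction $D_{\rm rel}^{-1}$ of a child clock, so I take $\rho=D_{\rm rel}^{-1}$. The required tangent amplitude is then $\rho^{-1/2}=D_{\rm rel}^{1/2}$, which is $\chi=D_{\rm rel}^{1/2}$.

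This identification is the main obstacle. One has to check two things. First, the Lie-area generator is built from commutators of consecutive detector columns, so the response that must be paid for is the square root of the area and not the area itself. Second, the remaining preparation estimates (bounded deformation, finite marks, finite age) still fix $N$ and $|I|$ as in \cref{thm:q1213}. I would try to obtain the first point by rewriting $\Omega_2$ for the two-column detector through the Gram matrix, and confirming that its norm is quadratic in the column amplitudes.

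\textbf{Step 3: consequences.} With $\beta=1/2$ we get $\mathcal D^{\rm crit}_{\rm prep}\gtrsim R^{(29q-27)/2}$, which diverges when $q<27/29$; this excludes that range within the stated regime. At $q_0=13/14$ the exponent is $(29\cdot13/14-27)/2=(377-378)/28=-1/28$, so $\mathcal D^{\rm crit}_{\rm prep}\gtrsim R^{-1/28}\to\infty$.

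As in \cref{thm:q1213}, failure of the uncancelled, bounded-deformation or finite-mark hypotheses is not counted as dissipation payment. Such failures are routed to the same alternatives listed there.
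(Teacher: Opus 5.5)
Your proposal is correct and follows the paper's proof essentially step for step. You take an extra factor $\chi$ in $N$ and in $T_{\rm prep}/T_R$ from the linear source-response balance and a factor $\chi^2$ in the inverse-Gram mass, which gives $R\chi^3D_{\rm rel}^{13}=RD_{\rm rel}^{13+3\beta}$; you then get $\beta=1/2$ from the $\alpha=1/2$ face of \cref{thm:temporal} with $\rho=D_{\rm rel}^{-1}$. The paper makes that last identification directly, just as in your heuristic, so it does not carry out the commutator/Gram check you flag as the main obstacle, and you do not need it to match the paper's argument.
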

\begin{proof}
We begin with the general efficiency factor because it makes the origin of the
exponent $29/2$ transparent.  Suppose the required fresh response is enlarged
by a factor $\chi$ relative to the order-one normalization in
\cref{thm:q1213}.  The source-response estimate is linear in the requested
terminal response, so the number of required preparation turns acquires one
factor of $\chi$:
\[
 N\gtrsim \chi D_{\rm rel}^7.
\]
The physical preparation duration therefore satisfies
\[
 \frac{T_{\rm prep}}{T_R}
 \gtrsim \chi D_{\rm rel}^6.
\]
The inverse-Gram argument is quadratic in the response amplitude; hence the
normalized current mass acquires two factors of $\chi$:
\[
 \|\widetilde G^f\|_2^2
 \gtrsim \chi^2D_{\rm rel}^7.
\]
Multiplying the physical mass cost by the number of child clocks gives
\[
 \mathcal D_{\rm prep}
 \gtrsim
 R\,(\chi^2D_{\rm rel}^7)
 (\chi D_{\rm rel}^6)
 =R\chi^3D_{\rm rel}^{13}.
\]
For $\chi=D_{\rm rel}^{\beta}$ this becomes
\[
 \mathcal D_{\rm prep}
 \gtrsim
 R D_{\rm rel}^{13+3\beta}.
\]

We now specialize to the mean-zero critical Lie-area response.  In the fast
Floquet calculation of \cref{thm:temporal}, the first-order mean vanishes and
the second Magnus term is the first nonzero contribution.  A nontrivial
order-one return on a time scale whose relative fast period is
$\rho=D_{\rm rel}^{-1}$ therefore requires the one-cycle generator amplitude
to have the critical size
\[
 \rho^{-1/2}=D_{\rm rel}^{1/2}.
\]
Thus the critical Lie-area branch corresponds to $\beta=1/2$.  Substitution
gives
\[
 13+3\beta
 =13+\frac32
 =\frac{29}{2},
\]
and hence
\[
 \mathcal D_{\rm prep}^{\rm crit}
 \gtrsim R D_{\rm rel}^{29/2}.
\]
Since $D_{\rm rel}=R^{q-1}$,
\[
 R D_{\rm rel}^{29/2}
 =R^{1+(29/2)(q-1)}
 =R^{(29q-27)/2}.
\]
The scale exponent is zero precisely at $q=27/29$.  At $q_0=13/14$,
\[
 \frac{29q_0-27}{2}
 =-\frac1{28},
\]
so
\[
 \mathcal D_{\rm prep}^{\rm crit}
 \gtrsim R^{-1/28}\to\infty.
\]
Finally, for general $\beta$, solving
\[
 1+(13+3\beta)(q-1)=0
\]
gives
\[
 q_c(\beta)
 =1-\frac1{13+3\beta}
 =\frac{12+3\beta}{13+3\beta}.
\]
All conclusions retain the stated uncancelled, finite-age,
bounded-deformation and finite-mark hypotheses; failure of one of these
hypotheses is an alternative branch rather than hidden payment.
\end{proof}
\begin{remark}
Above $27/29$ the preparation-dissipation contradiction disappears.  No claim
is made here that the remaining tight source-built corridor is automatically
absorbed by compactness or provenance.  Transfer-fed cancellation, old
ancestry, contact, frequency, and marked non-tightness remain separate
alternatives.
\end{remark}

\section{Compact finite-depth donor alternatives}
\label{sec:quniform}

The preparation exponents in the preceding section are sharp within their
declared regimes, but they are not the global finite-depth source-birth
endpoint.  The broader source-age argument is uniform throughout
$6/7<q<1$.  In this section $D_{\rm phys}=R^q$ denotes the physical donor
scale and
\[
 D_{\rm rel}=\frac{D_{\rm phys}}R=R^{q-1}
\]
denotes its distance in child units.

The quantitative age estimate used in the routing theorem is elementary once
the Duhamel source representation and the bounded-circulation scaling are kept
visible, so we record it separately.

\medskip
\noindent\textbf{Source-age lower bound for a compact donor.}
\begin{lemma}\label{lem:source-age}
Let a compact donor of diameter comparable to $D_{\rm phys}$ be located at
radial position $r_b$ and at distance comparable to $D_{\rm phys}$ from a
child of scale $R$.  Assume:
\begin{enumerate}[label=\textup{(\roman*)},leftmargin=2.2em]
\item the donor is generated after time $s$ solely by the native source
$S_F=\partial_z(F^2)$ and the homogeneous $G$-propagator is $L^\infty$-contractive;
\item the source has no axial frequency finer than the donor scale, so
$\|\partial_z(F^2)\|_\infty\lesssim
D_{\rm phys}^{-1}\|F^2\|_\infty$;
\item $\|\Gamma\|_\infty\le\Gamma_*$;
\item at time $t$ this donor supplies a fixed fraction $\eta>0$ of a
rate-leading response $c_R\asymp\kappa_*\nu R^{-3}$.
\end{enumerate}
Then, with $T_R=R^2/\nu$ and $\gamma_*=\Gamma_*/\nu$,
\[
\boxed{
 \frac{t-s}{T_R}
 \gtrsim
 c\,\eta\kappa_*\gamma_*^{-2}
 \frac{r_b^4D_{\rm phys}}{R^5}.}
\]
Consequently, if $r_b\gtrsim R$,
\[
\boxed{
 \frac{t-s}{T_R}
 \gtrsim
 c\,\eta\kappa_*\gamma_*^{-2}D_{\rm rel},}
\]
whereas if the source is genuinely off axis, $r_b\gtrsim D_{\rm phys}$,
\[
\boxed{
 \frac{t-s}{T_R}
 \gtrsim
 c\,\eta\kappa_*\gamma_*^{-2}D_{\rm rel}^{5}.}
\]
\end{lemma}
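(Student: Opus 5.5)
The plan is a Duhamel bound on the donor's $G$-content, converted into a rate comparison against the required response $c_R$. The result is then specialised to the two radial regimes.

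\emph{Step 1: bound the donor amplitude.} By (i), write
\[
G(t)=\int_s^t S_\nu(t-\tau)\,\partial_z(F^2)(\tau)\,d\tau
\]
on the donor region. $L^\infty$-contractivity then gives $\|G_{\rm donor}(t)\|_\infty\le (t-s)\sup_\tau\|\partial_z(F^2)\|_\infty$. Using (ii) and then $F=\Gamma/r^2$ with (iii) at $r\asymp r_b$, we have $\|F^2\|_\infty\lesssim \Gamma_*^2 r_b^{-4}$. Hence
\[
\|G_{\rm donor}(t)\|_\infty\lesssim (t-s)\,\frac{\Gamma_*^2}{D_{\rm phys}\,r_b^4}.
\]

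\emph{Step 2: convert donor amplitude into the child response.} The response at the child is produced through the Hodge kernel. The donor has $G$-mass of order $\|G\|_\infty D_{\rm phys}^{5}$ in five dimensions, and it sits at distance $\asymp D_{\rm phys}$, where a second-derivative kernel has size $D_{\rm phys}^{-5}$. In the paper's normalisation I therefore expect the delivered response to be bounded by a fixed multiple of $\|G_{\rm donor}\|_\infty$ times a geometric factor. I would fix this factor so that the rate-leading response is linear in the donor amplitude, choosing the normalisation so that $\kappa_*$ absorbs the kernel constant. Matching the target exponent $r_b^4D_{\rm phys}/R^5$ requires the delivered response to be $\lesssim \nu^{-1}\times(\text{donor amplitude})$ in rate units. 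I would first try to realise this as the statement that a response of size $\eta c_R=\eta\kappa_*\nu R^{-3}$ needs donor amplitude $\gtrsim \eta\kappa_*\nu R^{-3}$ in these units.

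\emph{Step 3: rearrange.} Set the lower bound from (iv) against the upper bound from Steps 1--2:
\[
\eta\kappa_*\nu R^{-3}\lesssim (t-s)\,\Gamma_*^2 D_{\rm phys}^{-1}r_b^{-4}.
\]
This gives $t-s\gtrsim \eta\kappa_*\nu\,r_b^4D_{\rm phys}\Gamma_*^{-2}R^{-3}$. Dividing by $T_R=R^2/\nu$ and writing $\Gamma_*^2=\nu^2\gamma_*^2$ yields
\[
\frac{t-s}{T_R}\gtrsim c\,\eta\kappa_*\gamma_*^{-2}\,\frac{r_b^4D_{\rm phys}}{R^5}.
\]
The dimensions close: $\nu\cdot\nu/\nu^2$ is dimensionless, and $r_b^4D_{\rm phys}/R^5$ is scale-free.

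\emph{Step 4: the two corollaries.} These are substitutions. If $r_b\gtrsim R$, then $r_b^4D_{\rm phys}/R^5\gtrsim D_{\rm phys}/R=D_{\rm rel}$. If $r_b\gtrsim D_{\rm phys}$, it is $\gtrsim D_{\rm phys}^5/R^5=D_{\rm rel}^5$.

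\emph{Main obstacle.} The difficult step is Step 2: making the transfer constant from donor amplitude to the child's rate-leading response honest, so that no stray powers of $D_{\rm rel}$ appear. I would handle it by treating the response functional as normalised exactly as in the definition of $c_R$, so that (iv) compares like quantities. Any additional far-field decay would only strengthen the bound, so I would state the estimate as a lower bound that discards that decay.
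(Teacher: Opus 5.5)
Your proposal is correct and follows the paper's proof essentially step for step. The steps are Duhamel with the $L^\infty$-contractive propagator (because of the drift this is a two-parameter propagator $\mathcal U_G(t,\tau)$, not a semigroup $S_\nu(t-\tau)$), the source bound $\Gamma_*^2/(r_b^4D_{\rm phys})$, the requirement $\|G^{\rm src}\|_\infty\gtrsim\eta\kappa_*\nu R^{-3}$, and the two substitutions. For your Step 2 the paper simply asserts that the Hodge derivative acting on a bounded-aspect $D_{\rm phys}$-scale donor has a scale-invariant normalized bound, which is what your count $\|G\|_\infty D_{\rm phys}^5\cdot D_{\rm phys}^{-5}$ expresses; your aside about a factor $\nu^{-1}$ is unnecessary, since $c_R$ already has the units of $G$.
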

\begin{proof}
Let $\mathcal U_G(t,\tau)$ denote the propagator for the homogeneous current
equation
\[
 (\partial_t+b\cdot\nabla)g=\nu\Delta_5g.
\]
By the scalar maximum principle,
\[
 \|\mathcal U_G(t,\tau)h\|_\infty\le\|h\|_\infty.
\]
The portion of the current born from the source between $s$ and $t$ is
therefore exactly
\[
 G^{\rm src}_{[s,t]}(t)
 =\int_s^t\mathcal U_G(t,\tau)S_F(\tau)\,d\tau,
\]
and hence
\[
 \|G^{\rm src}_{[s,t]}(t)\|_\infty
 \le\int_s^t\|S_F(\tau)\|_\infty\,d\tau.
\]

We next estimate the source.  The no-finer-frequency assumption gives
\[
 \|S_F\|_\infty
 =\|\partial_z(F^2)\|_\infty
 \lesssim D_{\rm phys}^{-1}\|F^2\|_\infty.
\]
Since $F=\Gamma/r^2$ and the donor lies at radial coordinate comparable to
$r_b$,
\[
 \|F^2\|_\infty
 \le C\Gamma_*^2r_b^{-4}.
\]
Therefore
\[
 \|S_F\|_\infty
 \lesssim
 \frac{\Gamma_*^2}{r_b^4D_{\rm phys}}.
\]
Combining this with the Duhamel estimate gives
\[
 \|G^{\rm src}_{[s,t]}(t)\|_\infty
 \lesssim
 (t-s)\frac{\Gamma_*^2}{r_b^4D_{\rm phys}}.
\]

A bounded-aspect donor occupying one $D_{\rm phys}$-scale chamber is acted on
by the relevant five-dimensional Hodge derivative with a scale-invariant
normalized operator bound.  Thus supplying a fraction $\eta$ of
$c_R\asymp\kappa_*\nu R^{-3}$ requires
\[
 \|G^{\rm src}_{[s,t]}(t)\|_\infty
 \gtrsim \eta\kappa_*\nu R^{-3}.
\]
Comparing the last two inequalities yields
\[
 t-s
 \gtrsim
 c\eta\kappa_*
 \frac{\nu r_b^4D_{\rm phys}}{\Gamma_*^2R^3}.
\]
Divide by the child diffusion time $T_R=R^2/\nu$.  Because
$\gamma_*=\Gamma_*/\nu$,
\[
 \frac{t-s}{T_R}
 \gtrsim
 c\eta\kappa_*\gamma_*^{-2}
 \frac{r_b^4D_{\rm phys}}{R^5}.
\]
This proves the general formula.

If the source is axis-near but is not already a smaller radial owner, then
$r_b\gtrsim R$, and therefore
\[
 \frac{r_b^4D_{\rm phys}}{R^5}
 \gtrsim\frac{D_{\rm phys}}R
 =D_{\rm rel}.
\]
If instead the source is genuinely off-axis on the donor scale,
$r_b\gtrsim D_{\rm phys}$, then
\[
 \frac{r_b^4D_{\rm phys}}{R^5}
 \gtrsim
 \left(\frac{D_{\rm phys}}R\right)^5
 =D_{\rm rel}^5.
\]
The two claimed lower bounds follow.
\end{proof}

\medskip
\noindent\textbf{Compact finite-depth donor routing.}
\begin{theorem}\label{thm:quniform}
Assume
\[
 R\ll D_{\rm phys}\lesssim C R^{6/7},
 \qquad
 D_{\rm phys}=R^q,
 \qquad
 \frac67<q<1,
\]
and assume the first-use transition class has the following two standard
properties: a compact donor that remains load-bearing for an unbounded number
of child clocks at a fixed comparable scale is assigned at an earlier strict
transition, and a decisive high-frequency actuator with a tight material
preimage is likewise assigned to an earlier finer transition.  Then a compact
source-born next actuator cannot be produced in $O(1)$ child clocks without
one of
\[
\boxed{\begin{gathered}
\text{finer source frequency}\ \vee\ \text{smaller-axis concentration}
\ \vee\ \text{noncompact donor}\\
\vee\ \text{boundary/contact input}\ \vee\ \text{earlier strict assignment}
\ \vee\ \text{scale/deformation non-tightness}.
\end{gathered}}
\]
Consequently
\[
\boxed{\begin{aligned}
 \text{finite-depth compact source birth}
 &\Longrightarrow \text{earlier assignment or finer scale}\\
 &\qquad\vee\ \text{loss of compactness or boundary entrance}.
\end{aligned}}
\]
The conclusion is uniform for every $6/7<q<1$.
\end{theorem}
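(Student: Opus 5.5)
The plan is to argue by contraposition: suppose a compact source-born next actuator is produced in $O(1)$ child clocks and none of the listed alternatives occurs. Then the hypotheses of \cref{lem:source-age} hold. Concretely:
\begin{itemize}[leftmargin=2.2em]
\item The donor is compact, with diameter comparable to $D_{\rm phys}$, and lies at distance comparable to $D_{\rm phys}$ from the child. This follows from excluding the noncompact-donor and boundary/contact branches.
\item It is generated solely by $S_F=\partial_z(F^2)$ through the $L^\infty$-contractive homogeneous $G$-propagator. This is what ``source-born'' means once boundary input is excluded.
\item It has no axial frequency finer than $D_{\rm phys}$, by exclusion of the finer-source-frequency branch.
\item It carries bounded circulation $\Gamma_*$.
\item It supplies a fixed fraction $\eta$ of the rate-leading response $c_R\asymp\kappa_*\nu R^{-3}$, since otherwise it would not be decisive for the next actuator.
\end{itemize}
The one remaining geometric input is the radial location $r_b$ of the donor. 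The argument splits into cases according to $r_b$.

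\emph{Case $r_b\ll R$.} The source sits closer to the axis than the child scale. This is the smaller-axis concentration alternative, or equivalently the donor is already a smaller radial owner, which the transition class assigns to a finer scale.

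\emph{Case $r_b\gtrsim R$.} \cref{lem:source-age} gives
\[
\frac{t-s}{T_R}\gtrsim c\,\eta\kappa_*\gamma_*^{-2}D_{\rm rel},
\qquad D_{\rm rel}=R^{q-1}\to\infty
\]
uniformly for $q<1$. In the genuinely off-axis subcase $r_b\gtrsim D_{\rm phys}$, the bound improves to $D_{\rm rel}^5$. In either subcase the birth age, measured in child clocks, is unbounded as $R\downarrow0$, since $\eta,\kappa_*,\gamma_*$ are fixed and tightness excludes their degeneration. This contradicts production in $O(1)$ child clocks, unless the donor was created earlier and remained load-bearing for an unbounded number of child clocks.

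Next, the surviving ``old donor'' branch is routed through the two first-use properties. A compact donor that stays load-bearing at a fixed comparable scale for unboundedly many child clocks is, by the first property, assigned at an earlier strict transition. If instead the donor's scale or deformation drifts during this long age, that is the scale/deformation non-tightness branch. If the decisive part of the response is carried by high frequencies with a tight material preimage, the second property assigns it to an earlier finer transition. Without a tight preimage, it falls under finer source frequency or non-tightness. Collecting the cases yields the first boxed disjunction, and regrouping the terms gives the second.

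Uniformity in $q$ comes from the fact that the only $q$-dependence enters through $D_{\rm rel}=R^{q-1}$, which diverges for every $q<1$. The restriction $D_{\rm phys}\lesssim CR^{6/7}$, i.e.\ $q>6/7$, is used only to keep the donor mesoscopic ($R\ll D_{\rm phys}$) and inside the regime where the compact-chamber Hodge bound from \cref{lem:source-age} is scale-invariant. No exponent threshold like those in \cref{thm:q1213} or \cref{thm:q2729} is needed, because the age bound here is a pure divergence, not a power competing against $R$.

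The main obstacle is making the case split exhaustive. In particular, the step ``the donor is born after time $s$ solely by the source'' must be identified correctly: the birth time $s$ should be the first-use time of the donor. Then any part of the donor predating $s$ is either an old donor, routed to earlier assignment, or a same-shell cancellation. The latter is absorbed into non-tightness, so that no contribution is silently lost. I would handle this by defining $s$ as the latest time before which the donor's current is below a fixed fraction of its terminal size, and applying the Duhamel split from \cref{lem:source-age} on $[s,t]$.
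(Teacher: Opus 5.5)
Your proposal follows essentially the same route as the paper. The age bound of \cref{lem:source-age} diverges because $D_{\rm rel}=R^{q-1}\to\infty$, so an $O(1)$-clock birth forces failure of one of that lemma's hypotheses: finer frequency, $r_b\ll R$, a noncompact chamber, or boundary input. An early birth is instead routed by the two first-use properties to earlier strict assignment or to scale/deformation non-tightness, and uniformity comes only from $q<1$. One small slip is that the mesoscopic condition $R\ll D_{\rm phys}$ is the constraint $q<1$, not the upper bound $D_{\rm phys}\lesssim CR^{6/7}$; the lower bound $q>6/7$ only fixes the range and plays no role in the argument, as the paper itself notes.
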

\begin{proof}
The proof is an exhaustive three-regime argument.  The quantitative input is
\cref{lem:source-age}; the two first-use statements in the hypotheses are used
only after the age lower bound has forced a long preparation interval.

Because
\[
 D_{\rm rel}=R^{q-1}\longrightarrow\infty
 \qquad(R\downarrow0),
\]
both age lower bounds in \cref{lem:source-age} diverge on their respective
clean branches.

\emph{Late or fast birth.}
Suppose the donor is born only $O(1)$ child clocks before the claimed
transition.  The age lower bound then fails for sufficiently small $R$ unless
one of the assumptions used to prove it fails.  Inspecting the proof of
\cref{lem:source-age}, the possible failures are explicit: the source can use
axial frequency finer than $D_{\rm phys}^{-1}$, it can concentrate at a radial
scale smaller than $R$, it can cease to occupy one compact
$D_{\rm phys}$-scale chamber, or the current can enter through a
boundary/contact face rather than being born from the interior source.  These
are the first four alternatives in the theorem.

\emph{Early birth at a fixed comparable scale.}
Assume instead that the donor is born early enough to satisfy the age lower
bound and then stays on a compact scale comparable with $D_{\rm phys}$.  Its
residence time measured in child clocks tends to infinity because
$D_{\rm rel}\to\infty$.  By the first-use residence hypothesis, a compact
load-bearing donor persisting for such a long interval is detected at an
earlier strict transition.  The terminal event is therefore not genuinely
first-use; it is preempted by an earlier assignment.

\emph{Early birth with moving scale or deformation.}
To avoid that preemption while retaining early preparation, the donor cannot
remain in a fixed compact chart.  It must move in scale, develop increasing
frequency, lose compactness of its material preimage, or undergo unbounded
deformation/contact motion.  These are precisely the declared non-tight
alternatives.

There remains a possible high-frequency actuator whose terminal spatial
support looks compact.  The second first-use hypothesis in the theorem says
that if its material preimage is still tight, such an actuator is assigned to
an earlier finer transition.  Hence a genuinely new high-frequency terminal
actuator can survive only by losing tightness of its material, age, spatial,
contact, or frequency description.

The three regimes exhaust compact source-born preparation.  Nothing in the
argument uses the sharper $12/13$ or $27/29$ exponents; after the mesoscopic
range is fixed, the only exponent needed is $q<1$, which gives
$D_{\rm rel}\to\infty$.  The conclusion therefore holds uniformly throughout
$6/7<q<1$.
\end{proof}

\begin{remark}
The estimate
\[
 \mathcal D_{\rm prep}^{\rm crit}
 \gtrsim R^{(29q-27)/2}
\]
remains a sharper quantitative deletion inside the uncancelled,
bounded-deformation, finite-mark critical Lie-area class.  At $q_0=13/14$ it
gives $\mathcal D_{\rm prep}^{\rm crit}\gtrsim R^{-1/28}$.  It is not the
reason the whole compact finite-depth source-birth class is routed; that
broader conclusion is \cref{thm:quniform}.
\end{remark}

\medskip
\noindent\textbf{No additional tight $q>27/29$ endpoint.}
\begin{corollary}
Under the hypotheses of \cref{thm:quniform},
\[
\boxed{
q>27/29
\text{ is not a separate tight finite-depth regularity endpoint branch.}
}
\]
The remaining branch must lose compactness in age, scale, frequency, material
preimage, contact, or generator variation.
\end{corollary}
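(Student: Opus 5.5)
The corollary should follow from \cref{thm:quniform} by restriction: the interval $27/29<q<1$ sits inside the range $6/7<q<1$ where that theorem holds, and \cref{thm:quniform} used no threshold beyond $q<1$. The plan is to take a putative tight finite-depth endpoint with donor exponent $q>27/29$ and show that its tightness contradicts the routing dichotomy. Here tightness means compact donor, bounded deformation, finite marks, no boundary or contact entry, no finer frequency, and first use.

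First I will check that $q>27/29$ meets the hypotheses of \cref{thm:quniform}. Since $27/29>6/7$, the constraint $R\ll D_{\rm phys}=R^q\lesssim CR^{6/7}$ holds for all small $R$, and $D_{\rm rel}=R^{q-1}\to\infty$. Under the tightness assumptions, both clean branches of \cref{lem:source-age} apply and force preparation ages of at least $D_{\rm rel}$ or $D_{\rm rel}^5$ child clocks. The three-regime argument of \cref{thm:quniform} then says the next actuator is one of three kinds. It may be born late, which breaks one of the hypotheses of \cref{lem:source-age}. It may be born early and stay compact, in which case it is assigned to an earlier strict transition. Or it may be born early and move, which loses tightness. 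None of these is a tight first-use endpoint.

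Second, I will explain why the range above $27/29$ needs separate mention. By \cref{thm:q2729}, the preparation-dissipation bound $R^{(29q-27)/2}$ goes to zero when $q>27/29$, so the dissipation argument excludes nothing there. The point of the corollary is that this gap is already covered by the uniform age argument, which does not depend on the dissipation exponent. I will note explicitly that the argument never invokes \cref{thm:q2729}, only \cref{lem:source-age} and the first-use hypotheses.

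Finally, I will match the surviving alternatives in the boxed conclusion of \cref{thm:quniform} with the list in the corollary:
\begin{itemize}
\item finer source frequency corresponds to frequency;
\item smaller-axis concentration and scale non-tightness correspond to scale;
\item a noncompact donor corresponds to material preimage;
\item boundary or contact input corresponds to contact;
\item deformation non-tightness corresponds to generator variation, since unbounded $\int_I\|\nabla b\|_\infty$ is the generator's variation;
\item earlier strict assignment together with long residence corresponds to age.
\end{itemize}
This bookkeeping is the main obstacle. Each branch of \cref{thm:quniform} has to be labelled as loss of compactness in one of the six listed senses, or else as not being an endpoint (earlier assignment). I will treat the earlier-assignment branch as a contradiction with first use, not as a surviving branch.
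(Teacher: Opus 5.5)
Your proposal is correct and follows the paper's argument. Like the paper, you restrict \cref{thm:quniform}, which holds for all $6/7<q<1$, to $q>27/29$, observe that only the Lie-area dissipation subcase of \cref{thm:q2729} switches off there while the source-age routing does not, and read off the surviving alternatives. One small labeling slip: $\int_I\|\nabla b\|_\infty$ is the generator's $L^1$ action, not its variation, so unbounded deformation belongs more naturally under flow-map/material-preimage or scale non-tightness; since the conclusion is a disjunction, this changes nothing.
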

\begin{proof}
The number $27/29$ is the zero of the scale exponent in the particular
critical Lie-area dissipation lower bound of \cref{thm:q2729}.  Above that
number the corresponding dissipation contradiction disappears, but the
source-age argument does not: \cref{thm:quniform} remains valid throughout the
entire interval $6/7<q<1$.  Thus a compact finite-depth source-born donor above
$27/29$ is still routed to an earlier assignment, a finer scale, a failure of
compactness, or boundary/contact entrance.  Crossing $27/29$ therefore opens
no new tight endpoint; it only removes one quantitative subcase of the
preparation-dissipation pincer.
\end{proof}

\section{Fixed initial old-current compression ceiling}
Let $g$ be one globally predeclared source-free old tag,
\[
\partial_tg+b\cdot\nabla g=\nu\Delta_5g,
\]
and use the physical measure $\dd\mu_3=r\,\dd r\dd z$.

\medskip
\noindent\textbf{Physical contraction and old-current compression ceiling.}
\begin{theorem}\label{thm:oldceiling}
For $1<p<\infty$,
\[
\frac{\dd}{\dd t}\int|g|^p\,\dd\mu_3
+\nu p(p-1)\int|g|^{p-2}|\nabla g|^2\,\dd\mu_3
+2\nu\int_{\R}|g(0,z)|^p\,\dd z=0.
\]
Kato's inequality and the maximum principle give forward $L^1$ and $L^\infty$
contraction.  Therefore the classical axisymmetric Biot--Savart estimate gives
\[
\boxed{\|U_g(t)\|_\infty
\le B_g:=C_{\rm BS}\|g(0)\|_{L^1(\mu_3)}^{1/3}
\|g(0)\|_\infty^{2/3}.}
\]
A fixed initial tag therefore cannot by itself supply physical compression of
size $\kappa_*\nu/R^2$ on arbitrarily small scales.
\end{theorem}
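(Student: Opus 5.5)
The plan has three parts: first the $L^p$ energy identity, then the contraction properties, then the Biot--Savart interpolation that produces $B_g$.

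\emph{Energy identity.} Write $\Delta_5 g=\partial_{rr}g+\frac3r\partial_rg+\partial_{zz}g$ and work with the measure $d\mu_3=r\,dr\,dz$. The key observation is the splitting
\[
r\Delta_5 g=\partial_r(r\partial_rg)+\partial_z(r\partial_zg)+2\partial_rg .
\]
The first two terms are the ordinary three-dimensional divergence in the meridional variables. The extra term $2\partial_r g$ is what produces the axis flux.

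I will multiply the equation by $p|g|^{p-2}g\,r$ and integrate over $r>0$, $z\in\R$, assuming enough decay at infinity. The three pieces are handled as follows.
\begin{itemize}
\item \emph{Transport.} This term drops out because $b$ is divergence free for $\mu_3$, that is, $\partial_r(ru^r)+\partial_z(ru^z)=0$, and $ru^r$ vanishes at the axis.
\item \emph{Divergence part.} Integrating by parts gives $-\nu p(p-1)\int|g|^{p-2}|\nabla g|^2\,d\mu_3$. There is no boundary term at $r=0$, since it carries the factor $r$.
\item \emph{Extra term.} $\nu\int\!\!\int 2\partial_r(|g|^p)\,dr\,dz=-2\nu\int_\R|g(0,z)|^p\,dz$.
\end{itemize}
Collecting the three pieces gives the stated identity.

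For $1<p<2$ the weight $|g|^{p-2}$ is singular at zeros of $g$. I will handle this by regularizing with $(g^2+\delta)^{p/2}$ and letting $\delta\to0$; this is the technical point that needs care. Regularity of $g$ at the axis, meaning $g$ is smooth in the five-dimensional sense, justifies all the traces.

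\emph{Contraction.} Each dissipative term in the identity is nonnegative, so $\|g(t)\|_{L^p(\mu_3)}$ is nonincreasing for every $1<p<\infty$.
\begin{itemize}
\item Letting $p\downarrow1$, or directly using Kato's inequality $\Delta_5|g|\ge \operatorname{sgn}(g)\Delta_5 g$ in the same integration, gives the $L^1(\mu_3)$ contraction.
\item Letting $p\to\infty$, or using the scalar maximum principle for $\partial_t+b\cdot\nabla-\nu\Delta_5$ (no zeroth-order term), gives the $L^\infty$ contraction.
\end{itemize}

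\emph{Biot--Savart bound.} Next I invoke the classical axisymmetric estimate for $\omega^\theta/r$: the meridional velocity $U=u^r/r$ generated by $g$ satisfies
\[
\|U\|_\infty\le C_{\rm BS}\|g\|_{L^1(\mu_3)}^{1/3}\|g\|_\infty^{2/3}.
\]
This is the standard bound appearing in Ukhovskii--Yudovich and Lei--Zhang type arguments, obtained by splitting the kernel into near and far parts. The exponents $1/3,2/3$ are forced by scaling: $U$ has degree $2$, $g$ has degree $3$, and $L^1(\mu_3)$ carries degree $3-3=0$, so $a\cdot 0+(1-a)\cdot 3=2$ gives $a=1/3$.

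Applying the estimate at time $t$ and then the two contractions yields $\|U_g(t)\|_\infty\le B_g$, with $B_g$ built from $g(0)$ as in the statement.

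\emph{Conclusion.} The bound $B_g$ is independent of $R$ and $t$. Requiring $\kappa_*\nu/R^2\le B_g$ forces $R\ge(\kappa_*\nu/B_g)^{1/2}$, so a fixed initial tag cannot supply compression of size $\kappa_*\nu/R^2$ on arbitrarily small scales.

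I expect the main obstacle to be citing or verifying the Biot--Savart inequality in exactly the $\mu_3$-weighted form. If a precise reference is unavailable, I would derive it from the five-dimensional representation, writing $U$ as a Riesz-type operator applied to $g$ regarded as a function on $\R^5$, and then splitting the kernel at radius $\lambda=(\|g\|_1/\|g\|_\infty)^{1/3}$.
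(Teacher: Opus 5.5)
Your proposal is correct and follows the paper's proof essentially step for step. You use the same multiplier $p|g|^{p-2}g$ against $r\,dr\,dz$, and your splitting $r\Delta_5 g=\partial_r(rg_r)+\partial_z(rg_z)+2g_r$ is a compact repackaging of the paper's separate integration of $g_{rr}$ and $\frac3r g_r$, giving the same axis term $-2|g(0,z)|^p$; the remaining steps (Kato and maximum-principle contractions, then the cited classical Biot--Savart bound) match, and your regularization for $1<p<2$ and the scaling check of the exponents $1/3,2/3$ are harmless additions the paper omits.
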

\begin{proof}
We first derive the $L^p$ identity for smooth $g$ with sufficient decay; the endpoint contraction statements then follow by approximation.

Multiply
\[
 g_t+b\cdot\nabla g=\nu\Delta_5g
\]
by $p|g|^{p-2}g$ and integrate against $d\mu_3=r\,dr\,dz$.  The time derivative is
\[
 \int p|g|^{p-2}g\,g_t\,d\mu_3
 =\frac{d}{dt}\int |g|^p\,d\mu_3.
\]
For the transport term, use axisymmetric incompressibility,
\[
 \partial_r(ru^r)+\partial_z(ru^z)=0.
\]
Since $b=(u^r,u^z)$,
\[
 \begin{aligned}
 \int p|g|^{p-2}g\,b\cdot\nabla g\,r\,dr\,dz
 &=\int b\cdot\nabla(|g|^p)\,r\,dr\,dz\\
 &=-\int |g|^p\bigl[\partial_r(ru^r)+\partial_z(ru^z)\bigr]dr\,dz\\
 &=0,
 \end{aligned}
\]
with no contribution from infinity because of the decay assumptions.

It remains to compute the diffusion term.  The $z$ part is standard:
\[
 \int p|g|^{p-2}g\,g_{zz}\,r\,dr\,dz
 =-p(p-1)\int |g|^{p-2}|g_z|^2r\,dr\,dz.
\]
For the radial part,
\[
 \Delta_5g=g_{rr}+\frac3r g_r+g_{zz}.
\]
Integrating $g_{rr}$ once by parts gives
\[
 \begin{aligned}
 &\int_0^\infty p|g|^{p-2}g\,g_{rr}\,r\,dr\\
 &\qquad=-p(p-1)\int_0^\infty |g|^{p-2}|g_r|^2r\,dr
 -\int_0^\infty p|g|^{p-2}g\,g_r\,dr.
 \end{aligned}
\]
The $3r^{-1}g_r$ term contributes
\[
 3\int_0^\infty p|g|^{p-2}g\,g_r\,dr.
\]
The two first-order radial pieces therefore combine to
\[
 2\int_0^\infty p|g|^{p-2}g\,g_r\,dr
 =2\int_0^\infty \partial_r(|g|^p)\,dr
 =-2|g(0,z)|^p,
\]
where the value at infinity vanishes.  After integration in $z$ we obtain
\[
 \int p|g|^{p-2}g\,\Delta_5g\,d\mu_3
 =-p(p-1)\int |g|^{p-2}|\nabla g|^2\,d\mu_3
 -2\int_{\R}|g(0,z)|^p\,dz.
\]
Multiplying by $\nu$ and moving the terms to the left gives the asserted identity.

For $p\downarrow1$, apply the identity to smooth convex approximations of $|g|$ and pass to the limit; equivalently one may use Kato's inequality for the scalar parabolic equation.  The favorable axis term has the correct sign, so
\[
 \|g(t)\|_{L^1(\mu_3)}\le\|g(0)\|_{L^1(\mu_3)}.
\]
The scalar maximum principle gives
\[
 \|g(t)\|_\infty\le\|g(0)\|_\infty.
\]
Finally the classical axisymmetric Biot--Savart estimate for the meridional field generated by $g$ is
\[
 \left\|\frac{b_g^r}{r}\right\|_\infty
 \le C_{\rm BS}
 \|g(t)\|_{L^1(\mu_3)}^{1/3}
 \|g(t)\|_\infty^{2/3}.
\]
Using the two contractions yields the time-independent bound $B_g$.  Since $B_g$ is fixed by the initial tag whereas $\kappa_*\nu/R^2\to\infty$ as $R\downarrow0$, a single predeclared old tag cannot alone furnish the required compression on arbitrarily small scales.
\end{proof}
\begin{remark}
This controls the aggregate current already present at the declared physical
initial time.  A cohort created later by $\partial_z(F^2)$ has its own birth
norms and is not controlled by $B_g$ merely because it becomes old at a later descendant stage
diffusion units.
\end{remark}

\section{Affine circulation handoff and the six-power flattening barrier}
\label{sec:affineflattening}

The following exact model theorem is useful because it separates geometric
two-generation handoff from repeatable full-PDE recharge.

\medskip
\noindent\textbf{Affine handoff and circulation semigroup.}
\begin{theorem}\label{thm:affinehandoff}
Consider the prescribed axisymmetric affine poloidal strain
\[
b^r=-\frac{a(t)}2r,\qquad b^z=a(t)z,\qquad
A(t)=\int_0^t a(s)\,\dd s,
\]
and let
\[
\delta=e^{-A(t_*)/2}\in(0,1).
\]
The material map is
\[
r(t_*)=\delta r_0,\qquad z(t_*)=\delta^{-2}z_0.
\]
After isotropic descendant normalization by \(\delta\),
\[
\boxed{r_{\rm descendant}=r_0,\qquad z_{\rm descendant}=\delta^{-3}z_0.}
\]
Thus parent equatorial cargo at \(z_0\asymp\delta^3\) becomes a polar descendant
driver, while granddescendant cargo must already lie at
\(z_0\asymp\delta^6\).  An infinite repetition requires nested axial
modulation at scales
\[
\delta^3,\delta^6,\delta^9,\ldots.
\]

For the circulation equation, set
\[
R=e^{A(t)/2}r,\qquad Z=e^{-A(t)}z,\qquad
\widetilde\Gamma(R,Z,t)=\Gamma(e^{-A(t)/2}R,e^{A(t)}Z,t).
\]
Then
\[
\partial_t\widetilde\Gamma
=
\nu e^{A(t)}
\left(
\widetilde\Gamma_{RR}-\frac1R\widetilde\Gamma_R
\right)
+
\nu e^{-2A(t)}\widetilde\Gamma_{ZZ}.
\]
Equivalently, for \(H=\widetilde\Gamma/R^2\),
\[
\partial_tH
=
\nu e^{A(t)}\Delta_{\mathbb R^4}^{\rm rad}H
+
\nu e^{-2A(t)}H_{ZZ},
\]
so the radial and axial heat operators commute.
\end{theorem}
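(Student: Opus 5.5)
The plan is to verify the statement in two independent parts: the Lagrangian geometry of the prescribed strain, then the Eulerian change of variables in the circulation equation. Both are exact computations; no earlier result of the paper is needed beyond the circulation equation itself. That equation, obtained from $F=\Gamma/r^2$ and the $F$-equation (or directly from the azimuthal momentum equation), is
\[
\Gamma_t+b\cdot\nabla\Gamma=\nu\Bigl(\Gamma_{rr}-\frac1r\Gamma_r+\Gamma_{zz}\Bigr).
\]

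\emph{Material map and handoff.} Solve the characteristic system $\dot r=-\tfrac{a}{2}r$ and $\dot z=az$. This gives $r(t)=e^{-A(t)/2}r_0$ and $z(t)=e^{A(t)}z_0$. At $t_*$ these become $r=\delta r_0$ and $z=\delta^{-2}z_0$, by the definition of $\delta$. Dividing both coordinates by $\delta$ (isotropic descendant normalization) gives $(r_0,\delta^{-3}z_0)$. Hence parent cargo at $z_0\asymp\delta^3$ lands at descendant height of order one, i.e.\ it becomes a polar driver.

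For the nested statement I will apply the same normalized map again on the next generation. The composition of two normalized handoffs multiplies the axial factors, so a granddescendant order-one driver needs $z_0\asymp\delta^6$. Induction then gives the sequence $\delta^{3k}$ of required modulation scales.

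\emph{Circulation in comoving variables.} Write $\Gamma(r,z,t)=\widetilde\Gamma(e^{A/2}r,e^{-A}z,t)$ and apply the chain rule. The time derivative produces
\[
\Gamma_t=\widetilde\Gamma_t+\tfrac{a}{2}R\widetilde\Gamma_R-aZ\widetilde\Gamma_Z.
\]
The transport term gives $b\cdot\nabla\Gamma=-\tfrac a2R\widetilde\Gamma_R+aZ\widetilde\Gamma_Z$, so the two contributions cancel exactly. This cancellation is the step I would check most carefully, since it is the only place where the affine form of $b$ enters.

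For the diffusion I will use
\[
\Gamma_{rr}=e^{A}\widetilde\Gamma_{RR},\qquad \frac{\Gamma_r}{r}=e^{A}\frac{\widetilde\Gamma_R}{R},\qquad \Gamma_{zz}=e^{-2A}\widetilde\Gamma_{ZZ}.
\]
Substituting these gives the stated equation for $\widetilde\Gamma$.

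Finally, set $\widetilde\Gamma=R^2H$. Expanding the radial operator,
\[
\widetilde\Gamma_{RR}-\frac1R\widetilde\Gamma_R=(2H+4RH_R+R^2H_{RR})-(2H+RH_R)=R^2\Bigl(H_{RR}+\frac3RH_R\Bigr).
\]
The right-hand side is $R^2\Delta^{\rm rad}_{\R^4}H$, and $R^2$ commutes with $\partial_{ZZ}$. Dividing the equation by $R^2$ therefore yields the $H$-equation.

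Commutation of the two operators is immediate: one acts only in $R$, the other only in $Z$, and their coefficients $e^{A(t)}$ and $e^{-2A(t)}$ depend on $t$ alone. I expect no genuine obstacle here. The only care needed is bookkeeping of the exponential factors: $e^{A/2}$ appears twice in the radial derivatives, so the radial coefficient is $e^{A}$, while $e^{-A}$ appears twice in the axial derivatives, giving $e^{-2A}$.
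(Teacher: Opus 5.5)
Your proposal is correct and follows essentially the same route as the paper: integrate the characteristics, divide by $\delta$, change variables so the transport term is absorbed into $\partial_t\widetilde\Gamma$, and substitute $\widetilde\Gamma=R^2H$ to produce $\Delta^{\rm rad}_{\R^4}$. The only cosmetic difference is that the paper gets the cancellation by noting that a fixed $(R,Z)$ moves along the characteristics, whereas you expand $\Gamma_t$ and $b\cdot\nabla\Gamma$ separately; this is the same computation.
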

\begin{proof}
The material trajectories solve the scalar ODEs
\[
 \dot r=-\frac{a(t)}2r,
 \qquad
 \dot z=a(t)z.
\]
Integrating and using $A(t)=\int_0^ta(s)\,ds$ gives
\[
 r(t)=e^{-A(t)/2}r_0,
 \qquad
 z(t)=e^{A(t)}z_0.
\]
At $t=t_*$ this becomes
\[
 r(t_*)=\delta r_0,
 \qquad
 z(t_*)=e^{A(t_*)}z_0=\delta^{-2}z_0,
\]
because $\delta=e^{-A(t_*)/2}$.

Descendant normalization rescales all physical lengths by the radial contraction factor $\delta$: in normalized variables we divide the terminal coordinates by $\delta$.  Hence
\[
 r_{\rm descendant}=\frac{r(t_*)}{\delta}=r_0,
 \qquad
 z_{\rm descendant}=\frac{z(t_*)}{\delta}=\delta^{-3}z_0.
\]
It follows immediately that a parent slice with $z_0\asymp\delta^3$ appears at order-one axial distance in the descendant chart.  A slice intended to become the corresponding equatorial cargo one generation later must, before the first handoff, lie another factor $\delta^3$ closer to the equator, namely at $z_0\asymp\delta^6$.  Iteration gives the sequence $\delta^{3j}$.

We next transform the circulation equation.  Under the prescribed strain it reads
\[
 \Gamma_t-\frac{a(t)}2r\Gamma_r+a(t)z\Gamma_z
 =\nu\left(\Gamma_{rr}-\frac1r\Gamma_r+\Gamma_{zz}\right).
\]
Set
\[
 r=e^{-A(t)/2}R,
 \qquad
 z=e^{A(t)}Z,
 \qquad
 \widetilde\Gamma(R,Z,t)=\Gamma(r,z,t).
\]
At fixed $(R,Z)$,
\[
 \dot r=-\frac{a(t)}2r,
 \qquad
 \dot z=a(t)z,
\]
so the chain rule gives
\[
 \partial_t\widetilde\Gamma
 =\Gamma_t-\frac{a(t)}2r\Gamma_r+a(t)z\Gamma_z.
\]
The spatial derivatives transform as
\[
 \partial_r=e^{A(t)/2}\partial_R,
 \qquad
 \partial_z=e^{-A(t)}\partial_Z.
\]
Consequently
\[
 \Gamma_{rr}-\frac1r\Gamma_r
 =e^{A(t)}\left(\widetilde\Gamma_{RR}-\frac1R\widetilde\Gamma_R\right),
\]
and
\[
 \Gamma_{zz}=e^{-2A(t)}\widetilde\Gamma_{ZZ}.
\]
Substitution yields
\[
 \partial_t\widetilde\Gamma
 =\nu e^{A(t)}\left(\widetilde\Gamma_{RR}-\frac1R\widetilde\Gamma_R\right)
 +\nu e^{-2A(t)}\widetilde\Gamma_{ZZ}.
\]

Now write $\widetilde\Gamma=R^2H$.  A direct differentiation gives
\[
 \left(\partial_{RR}-\frac1R\partial_R\right)(R^2H)
 =R^2\left(H_{RR}+\frac3RH_R\right).
\]
The operator in parentheses is precisely the radial Laplacian in four dimensions.  Dividing the transformed circulation equation by $R^2$ therefore gives
\[
 H_t
 =\nu e^{A(t)}\Delta_{\R^4}^{\rm rad}H
 +\nu e^{-2A(t)}H_{ZZ}.
\]
The two spatial operators act in different variables and hence commute.  In particular, if
\[
 \sigma_r(t)=\nu\int_0^t e^{A(s)}\,ds,
 \qquad
 \sigma_z(t)=\nu\int_0^t e^{-2A(s)}\,ds,
\]
then the solution may be written exactly as
\[
 H(t)=e^{\sigma_r(t)\Delta_{\R^4}^{\rm rad}}
      e^{\sigma_z(t)\partial_{ZZ}}H_0.
\]
This explicit semigroup formula is the basis of the viscous part of the next theorem.
\end{proof}

For a pure-swirl circulation profile $\Gamma$, define the pressure-quadrupole functional at the axis by
\[
 \mathcal K[\Gamma]
 =\frac32\int_0^\infty\int_{\R}
 \Gamma(r,z)^2
 \frac{r^2-4z^2}{r(r^2+z^2)^{7/2}}\,dz\,dr.
\]
For fixed $r>0$, denote its axial kernel by
\[
 K_r(z)=\frac{r^2-4z^2}{r(r^2+z^2)^{7/2}}.
\]
The elementary kernel moments used below are
\[
 \int_{\R}K_r(z)\,dz=0,
 \qquad
 \int_{\R}z^2K_r(z)\,dz=-\frac4{3r^3}.
\]

\medskip
\noindent\textbf{Six-power recharge decay.}
\begin{theorem}\label{thm:sixpower}
Assume, in addition to the preceding affine handoff, that the parent circulation reservoir is $C^2$ in the axial variable and is supported in a fixed radial annulus
\[
 0<r_-\le r\le r_+<\infty.
\]
Set
\[
 C_0:=1+\int_{r_-}^{r_+}r^{-3}
 \|\partial_{zz}(\Gamma_0^2)(r,\cdot)\|_{L^\infty_z}\,dr.
\]
In the inviscid transport limit the reservoir is flattened in descendant coordinates according to
\[
\Gamma_{\rm descendant}(R,Z)=\Gamma_0(R,\delta^3Z).
\]
For the pure-swirl pressure-quadrupole functional \(\mathcal K\),
\[
\boxed{
|\mathcal K_{\rm descendant}|\lesssim C_0\delta^6,
\qquad
|\mathcal K_k|\lesssim C_0\lambda_k^6
}
\]
along successive total radial contraction factor \(\lambda_k\).
Viscosity imposes the additional scale requirement
\[
\boxed{
\frac{Mr_0^2}{\nu}\gtrsim \delta^{-6}
}
\]
for retaining order-one descendant-scale axial modulation.  Hence simple affine
recharge from one fixed smooth reservoir cannot produce a scale-uniform
recurrent quadrupole.
\end{theorem}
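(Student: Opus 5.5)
The plan has three parts: the inviscid flattening law, the $\delta^6$ bound on $\mathcal K$ from the kernel moments, and the viscous scale requirement from the semigroup formula of \cref{thm:affinehandoff}.

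\emph{Flattening law.} In the inviscid limit, $\Gamma$ is transported by the affine material map. By \cref{thm:affinehandoff}, after isotropic descendant normalization a parent point $(r_0,z_0)$ sits at $(r_0,\delta^{-3}z_0)$. Inverting this map gives $\Gamma_{\rm descendant}(R,Z)=\Gamma_0(R,\delta^3Z)$. The radial support stays in $[r_-,r_+]$, so $\mathcal K_{\rm descendant}$ is an integral over the fixed annulus.

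\emph{The $\delta^6$ bound.} For each fixed $R$, write $\phi_R(Z)=\Gamma_0(R,\delta^3Z)^2$ and Taylor expand in $Z$ to second order with integral remainder:
\[
\phi_R(Z)=\phi_R(0)+\delta^3 Z\,\partial_z(\Gamma_0^2)(R,0)+\tfrac12\delta^6Z^2\,\partial_{zz}(\Gamma_0^2)(R,\xi).
\]
I then integrate each term against the axial kernel $K_R(Z)$.
\begin{itemize}
\item The constant term vanishes, since $\int K_R=0$.
\item The linear term vanishes, since $K_R$ is even in $Z$.
\item For the quadratic remainder, $|Z^2K_R(Z)|\lesssim R^{-1}(R^2+Z^2)^{-3/2}$ is integrable, with $\int|Z^2K_R|\,dZ\lesssim R^{-3}$. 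This is consistent with the stated moment $-\tfrac{4}{3r^3}$. So this term is bounded by $\delta^6 R^{-3}\|\partial_{zz}(\Gamma_0^2)(R,\cdot)\|_\infty$.
\end{itemize}
Integrating over $R\in[r_-,r_+]$ gives $|\mathcal K_{\rm descendant}|\lesssim C_0\delta^6$. Iterating the handoff, the total contraction factor $\lambda_k$ simply replaces $\delta$ in the same formula, since the composition of affine maps is affine with $\delta\mapsto\lambda_k$. This gives $|\mathcal K_k|\lesssim C_0\lambda_k^6$.

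The main obstacle is the integrability of $Z^2K_R$: the kernel decays only like $|Z|^{-5}$, so $Z^2K_R\sim|Z|^{-3}$, which is still integrable. The remainder bound therefore needs the uniform $L^\infty_z$ control of $\partial_{zz}(\Gamma_0^2)$, not decay of $\Gamma_0$. This is exactly why $C_0$ is built from the $L^\infty_z$ norm.

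\emph{Viscous requirement.} I use the semigroup formula $H(t)=e^{\sigma_r\Delta^{\rm rad}_{\R^4}}e^{\sigma_z\partial_{ZZ}}H_0$ from \cref{thm:affinehandoff}.
\begin{itemize}
\item The descendant-scale modulation comes from parent axial wavelength $\ell\asymp\delta^3 r_0$.
\item The axial heat factor multiplies a mode of wavenumber $\ell^{-1}$ by $\exp(-\sigma_z\ell^{-2})$. Taking $a\asymp M$ constant over the handoff, $\sigma_z\gtrsim\nu/M$, so retention requires $\nu/(M\delta^6r_0^2)\lesssim1$, i.e.\ $Mr_0^2/\nu\gtrsim\delta^{-6}$.
\item The radial factor $\sigma_r\lesssim\nu e^{A}/M=\nu\delta^{-2}/M$ is then automatically subdominant in descendant units.
\end{itemize}

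Combining the two parts: every recharge cycle either decays the quadrupole by $\delta^6$, or demands a strain Reynolds number growing like $\delta^{-6k}$. Hence no scale-uniform recurrence is possible from one fixed smooth reservoir.
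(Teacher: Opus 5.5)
Your proposal is correct and follows the paper's proof essentially step for step. It uses the same ingredients: subtracting the constant and odd linear Taylor terms against the even, mean-zero kernel $K_r$; the bound $\int z^2|K_r|\,dz\lesssim r^{-3}$ with axial stretch parameter $\delta^3$, replaced by $\lambda_k^3$ after $k$ handoffs; and the axial heat multiplier $\exp(-\sigma_z\delta^{-6})$ from the commuting semigroup.

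The only difference is in the viscous step, where you take the strain rate constant, $a\asymp M$. The paper assumes only $0\le a(t)\le M$. Inserting $a/M\le1$ and substituting $y=A(t)$, it obtains $\sigma_z\ge\frac{\nu}{M}\int_0^{A(t_*)}e^{-2y}\,dy=\frac{\nu}{2M}(1-\delta^4)$. This gives the same requirement $Mr_0^2/\nu\gtrsim\delta^{-6}$ without assuming $a$ is constant. Your side bound on $\sigma_r$ does need a lower bound on $a$, but the theorem does not use it.
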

\begin{proof}
We first prove the inviscid axial-flattening estimate and then incorporate viscosity.

For $\varepsilon>0$, define
\[
 (T_\varepsilon\Gamma)(r,z)=\Gamma(r,\varepsilon z).
\]
For fixed $r$, put $f_r(z)=\Gamma(r,z)^2$.  Then
\[
 \mathcal K[T_\varepsilon\Gamma]
 =\frac32\int_{r_-}^{r_+}\int_{\R}K_r(z)f_r(\varepsilon z)\,dz\,dr.
\]
Because $K_r$ is even and has zero mean, we may subtract both the constant term and the odd linear term without changing the integral:
\[
 \int K_r(z)f_r(\varepsilon z)\,dz
 =\int K_r(z)
 \bigl[f_r(\varepsilon z)-f_r(0)-\varepsilon zf_r'(0)\bigr]dz.
\]
Taylor's theorem with remainder gives
\[
 |f_r(\varepsilon z)-f_r(0)-\varepsilon zf_r'(0)|
 \le \frac{\varepsilon^2z^2}{2}\|f_r''\|_{L^\infty_z}.
\]
Since $r$ is bounded away from zero,
\[
 \int_{\R}|z|^2|K_r(z)|\,dz\le C r^{-3}.
\]
Therefore
\[
 |\mathcal K[T_\varepsilon\Gamma]|
 \le C\varepsilon^2
 \int_{r_-}^{r_+}r^{-3}
 \|\partial_{zz}(\Gamma^2)(r,\cdot)\|_\infty\,dr.
\]
This proves the quadratic flattening with respect to the axial stretch parameter $\varepsilon$.

The exact descendant geometry from \cref{thm:affinehandoff} has
\[
 \varepsilon=\delta^3.
\]
Thus
\[
 |\mathcal K_{\rm descendant}|
 \lesssim C_0(\delta^3)^2
 =C_0\delta^6.
\]
If $\lambda_k$ is the accumulated radial contraction after $k$ handoffs, the same calculation gives
\[
 |\mathcal K_k|\lesssim C_0\lambda_k^6.
\]
The exponent six is therefore not a dimensional guess; it is the product of the exact axial magnification exponent three and the second-order cancellation of the quadrupole kernel.

For completeness, the signed asymptotic coefficient can also be read from Taylor's formula.  Dividing by $\varepsilon^2$ and using dominated convergence,
\[
 \varepsilon^{-2}\mathcal K[T_\varepsilon\Gamma]
 \longrightarrow
 \frac34\int_{r_-}^{r_+}
 \partial_{zz}(\Gamma^2)(r,0)
 \left(\int_{\R}z^2K_r(z)\,dz\right)dr.
\]
Using the exact second moment gives
\[
 \varepsilon^{-2}\mathcal K[T_\varepsilon\Gamma]
 \longrightarrow
 -\int_{r_-}^{r_+}r^{-3}
 \partial_{zz}(\Gamma^2)(r,0)\,dr.
\]

We now include axial diffusion.  From the commuting semigroup formula in \cref{thm:affinehandoff}, the accumulated axial diffusion time is
\[
 \sigma_z
 =\nu\int_0^{t_*}e^{-2A(s)}\,ds.
\]
After descendant normalization, a unit axial mode corresponds in the Lagrangian parent coordinate to frequency of order $\delta^{-3}$.  Hence its heat multiplier contains
\[
 \exp\!\left(-\sigma_z\delta^{-6}\right).
\]
Suppose the extensional rate satisfies $0\le a(t)\le M$.  Since
\[
 A(t_*)=2\log\frac1\delta,
\]
and $A'(t)=a(t)\le M$, the change of variables $y=A(t)$ gives
\[
 \begin{aligned}
 \sigma_z
 &=\nu\int_0^{t_*}e^{-2A(t)}\,dt\\
 &\ge \frac{\nu}{M}\int_0^{A(t_*)}e^{-2y}\,dy\\
 &=\frac{\nu}{2M}(1-e^{-2A(t_*)})\\
 &=\frac{\nu}{2M}(1-\delta^4).
 \end{aligned}
\]
Therefore the unit descendant mode is damped by at least
\[
 \exp\!\left[-\frac{\nu(1-\delta^4)}{2M\delta^6}\right].
\]
To keep this factor bounded away from zero as $\delta\downarrow0$, the exponent must remain bounded.  Thus it is necessary that
\[
 M\gtrsim \nu\delta^{-6}
\]
in normalized units.  Restoring a parent radial length $r_0$ gives the dimensionless form
\[
 \frac{Mr_0^2}{\nu}\gtrsim\delta^{-6}.
\]
Hence a fixed smooth reservoir cannot generate a scale-uniform sequence of quadrupoles merely by repeated affine radial compression: its axial shape is flattened at order $\delta^6$, while viscosity suppresses precisely the increasingly fine axial frequencies that would be needed to compensate for that loss.
\end{proof}

\begin{remark}
The theorem does not rule out a genuinely non-self-similar multiscale relay,
and the finite two-generation cargo geometry itself is possible.  Its role is
to show that recurrence must replenish axial shape, not merely transport
circulation radially.
\end{remark}

\section{Continuation boundary and critical compression--swirl alignment}
\label{sec:alignment}

The scale atlas has a complementary continuation formulation that is useful
for interpreting which part of the feedback loop is genuinely dangerous.
Set, on a smooth preterminal interval,
\[
\mathcal S_T
=
\int_{t_0}^T
\|\partial_z(F^2)(t)\|_{\dot H^{-1}(\mathbb R^3)}^2\,\dd t,
\]
and
\[
K_-(t)=\|(-U(t))_+\|_\infty,\qquad
L_-(t)=\int_{t_0}^tK_-(s)\,\dd s,\qquad
\mathcal O_T=\int_{t_0}^T e^{2L_-(t)}\,\dd t.
\]

\medskip
\noindent\textbf{Equivalent continuation boundaries.}
\begin{theorem}
Work on a smooth preterminal finite-energy whole-space axisymmetric interval
$[t_0,T)$, and assume the physical-source continuation implication
\[
 \mathcal S_T<\infty\quad\Longrightarrow\quad T\text{ regular}
\]
holds in the class under consideration.  Then
\[
\boxed{
T\text{ regular}
\iff
\mathcal S_T<\infty
\iff
\mathcal O_T<\infty.
}
\]
Equivalently, a singular endpoint forces both quantities to diverge.  This is
a logical equivalence of continuation boundaries, not a pointwise quantitative
inequality between \(\mathcal S_T\) and \(\mathcal O_T\).
\end{theorem}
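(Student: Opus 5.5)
The plan is to close the cycle
\[
T\text{ regular}\;\Rightarrow\;\mathcal O_T<\infty\;\Rightarrow\;\mathcal S_T<\infty\;\Rightarrow\;T\text{ regular},
\]
together with the trivial implication $T$ regular $\Rightarrow\mathcal S_T<\infty$. The last arrow of the cycle is exactly the assumed physical-source continuation implication, so nothing needs to be proved there.

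\emph{Step 1: regular $\Rightarrow$ both quantities finite.} If $T$ is regular, the solution extends smoothly to a neighbourhood of $[t_0,T]$ with finite energy.
\begin{itemize}
\item Then $U=u^r/r$ is bounded on $[t_0,T]$, since $u^r$ is smooth and odd in $r$. Hence $K_-\in L^\infty$ and $L_-(t)\le L_-(T)<\infty$, so $\mathcal O_T\le (T-t_0)e^{2L_-(T)}<\infty$.
\item Likewise $F=u^\theta/r$ is smooth and bounded. The bound $\|\partial_z(F^2)\|_{\dot H^{-1}}\le\|F^2\|_{L^2}$ is uniformly bounded in time, so $\mathcal S_T<\infty$.
\end{itemize}

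\emph{Step 2: $\mathcal O_T<\infty\Rightarrow\mathcal S_T<\infty$.} This is the only nontrivial arrow.
\begin{itemize}
\item \emph{Comparison bound for $F$.} From $F_t+b\cdot\nabla F=\nu\Delta_5F-2UF$, the reaction coefficient satisfies $-2U\le 2K_-$. The scalar maximum principle, applied to $e^{-2L_-(t)}F$, gives
\[
\|F(t)\|_\infty\le\|F(t_0)\|_\infty\,e^{2L_-(t)}.
\]
\item \emph{Source estimate.} Write $F^4=F^2(u^\theta/r)^2$. Then
\[
\|\partial_z(F^2)\|_{\dot H^{-1}}^2\le\int F^4\,dx\le\|F\|_\infty^2\,\|u^\theta/r\|_{L^2}^2 .
\]
By Hardy's inequality, $\|u^\theta/r\|_{L^2}\lesssim\|\nabla u\|_{L^2}$.
\end{itemize}
The naive bound from these two facts carries the weight $e^{4L_-}$ against $\|\nabla u\|_2^2\,dt$. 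It does not carry the weight $e^{2L_-}\,dt$ that appears in $\mathcal O_T$.

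This exponent mismatch is where I expect the main obstacle. My first attempt is to spend only one factor of $\|F\|_\infty$ on the exponential, and control the other factor using the circulation bound $|F|\le\|\Gamma_0\|_\infty r^{-2}$ (maximum principle for $\Gamma$), interpolating
\[
\int F^4\,dx\le\|F\|_\infty\int|F|^3\,dx .
\]
The aim is to bound $\int|F|^3\,dx$ uniformly in time via the $L^p$ balance of $F$ in the five-dimensional measure $r^3\,dr\,dz$. There the drift is divergence free, so only the $-2UF$ term can produce growth, and that growth is again governed by $K_-$.

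If that still leaves an extra exponential, the fallback is a different ordering. I would first use Cauchy--Schwarz in time,
\[
\int e^{2L_-}\|\nabla u\|_2^2\,dt ,
\]
together with the energy inequality, and then argue on a final subinterval $[T-\tau,T)$. On that subinterval, finiteness of $\mathcal O_T$ makes $\int_{T-\tau}^T e^{2L_-}\,dt$ small, and a Grönwall/continuity argument for $\|F\|_\infty$ can be closed there. Once $\mathcal S_T<\infty$ is obtained, the hypothesis yields regularity, which closes the cycle.

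\emph{Step 3: conclusion.} The contrapositive of the cycle gives the stated reformulation: a singular endpoint forces $\mathcal S_T=\infty$ and $\mathcal O_T=\infty$. As the statement remarks, no pointwise comparison of $\mathcal S_T$ with $\mathcal O_T$ is asserted; only the implications are used.
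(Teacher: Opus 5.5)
\textbf{There is a genuine gap: the one nontrivial arrow, $\mathcal O_T<\infty\Rightarrow T$ regular, is never proved.} Your Step~1, the comparison bound $\|F(t)\|_\infty\le\|F(t_0)\|_\infty e^{2L_-(t)}$, and the final use of the hypothesis all match the paper.

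You correctly diagnose that $\int F^4\,dx\le\|F\|_\infty^2\|u^\theta/r\|_{L^2}^2$ carries the weight $e^{4L_-}$ instead of $e^{2L_-}$. Neither of your repairs removes it.
\begin{itemize}
\item The $L^p$ balances do not give a time-uniform bound on $\int|F|^3\,dx$. In the measure $r\,dr\,dz$ the reaction term $-2UF$ gives growth like $e^{6L_-}$. In $r^3\,dr\,dz$ the drift is \emph{not} divergence free: one has $\partial_ru^r+\frac3r u^r+\partial_zu^z=2U$. The balance there reads $\frac{d}{dt}\int|F|^p r^3\,dr\,dz\le 2(p-1)K_-\int|F|^p r^3\,dr\,dz$, which is again exponential in $L_-$.
\item The fallback is not an argument either. $\mathcal O_T<\infty$ is compatible with $L_-(t)\to\infty$. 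For instance, $e^{2L_-(t)}\sim(T-t)^{-1/2}$ gives finite $\mathcal O_T$, yet your bound $\|F\|_\infty^2\lesssim e^{4L_-}\sim(T-t)^{-1}$ is not integrable. Smallness of $\int_{T-\tau}^Te^{2L_-}\,dt$ says nothing about the pointwise size of $e^{2L_-}$. So no Gronwall or continuity argument built on the comparison bound can make $\|F\|_\infty^2$ integrable.
\end{itemize}
You should not expect an a priori bound of $\mathcal S_T$ by $\mathcal O_T$ at all. The statement explicitly disclaims any quantitative comparison between them.

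\textbf{The missing idea} is to spend exactly one power of $F$ on the exponential, against $|u^\theta|^2$ rather than $F^4$. You then route $\mathcal O_T<\infty$ directly to regularity instead of through $\mathcal S_T$. Since $(u^\theta)^2=\Gamma F$ and $\|\Gamma(t)\|_\infty\le\Gamma_*$ by the circulation maximum principle, you get $\|u^\theta(t)\|_\infty^2\le\Gamma_*\|F(t_0)\|_\infty e^{2L_-(t)}$. Hence $\int_{t_0}^T\|u^\theta\|_\infty^2\,dt\le\Gamma_*\|F(t_0)\|_\infty\,\mathcal O_T$, which is exactly the weight in $\mathcal O_T$.

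The classical scale-critical axisymmetric criterion $u^\theta\in L^2(t_0,T;L^\infty)$ then makes $T$ regular; the paper invokes Wei's 2016 criterion. Finally, $\mathcal S_T<\infty$ follows from your Step~1. So the paper's cycle is $\mathcal O_T<\infty$ $\Rightarrow$ $T$ regular $\Rightarrow$ $\mathcal S_T<\infty$ $\Rightarrow$ $T$ regular. The first arrow comes from an external continuation criterion, not from an estimate of the source action.

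Your instinct to pay one factor of $F$ with the exponential and the other with the bounded circulation was right. It just has to be applied to $|u^\theta|^2=\Gamma F$.
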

\begin{proof}
We separate the physical-source implication, which is an input of the theorem,
from the compression calculation, which follows directly from the equations
and the classical axisymmetric swirl criterion.

\emph{Step 1: the source-action boundary.}
By hypothesis,
\[
 \mathcal S_T<\infty
 \Longrightarrow
 T\text{ regular}.
\]
Conversely, suppose $T$ is regular.  Then the solution extends smoothly to
$[t_0,T+\varepsilon]$ for some $\varepsilon>0$.  In particular $F$ and its
spatial derivatives are bounded on a terminal neighborhood of $T$, and
\[
 t\longmapsto \partial_z(F^2)(t)
\]
is continuous, hence bounded, in the relevant $\dot H^{-1}(\mathbb R^3)$
topology.  Its squared norm is therefore integrable on $[t_0,T]$.  Thus
\[
 T\text{ regular}
 \quad\Longleftrightarrow\quad
 \mathcal S_T<\infty.
\]

\emph{Step 2: maximum principles for $\Gamma$ and $F$.}
The circulation satisfies the standard axisymmetric scalar equation
\[
 \Gamma_t+b\cdot\nabla\Gamma
 =\nu\left(\Gamma_{rr}-\frac1r\Gamma_r+\Gamma_{zz}\right),
\]
so its maximum principle gives
\[
 \|\Gamma(t)\|_\infty\le \|\Gamma(t_0)\|_\infty=:\Gamma_*.
\]
For $F=u^\theta/r$ we have
\[
 F_t+b\cdot\nabla F
 =\nu\Delta_5F-2UF.
\]
Applying Kato's inequality to $|F|$ and retaining only the part of $-U$ that
can increase $|F|$ gives
\[
 (\partial_t+b\cdot\nabla-\nu\Delta_5)|F|
 \le 2K_-(t)|F|,
 \qquad
 K_-(t)=\|(-U(t))_+\|_\infty.
\]
The parabolic maximum principle therefore yields
\[
 \|F(t)\|_\infty
 \le \|F(t_0)\|_\infty
 \exp\!\left(2\int_{t_0}^tK_-(s)\,ds\right)
 =\|F(t_0)\|_\infty e^{2L_-(t)}.
\]

\emph{Step 3: conversion to a classical swirl continuation norm.}
The identity
\[
 (u^\theta)^2=(ru^\theta)\frac{u^\theta}{r}=\Gamma F
\]
gives pointwise
\[
 \|u^\theta(t)\|_\infty^2
 \le \Gamma_*\|F(t_0)\|_\infty e^{2L_-(t)}.
\]
Hence
\[
 \int_{t_0}^T\|u^\theta(t)\|_\infty^2\,dt
 \le
 \Gamma_*\|F(t_0)\|_\infty\mathcal O_T.
\]
If $\mathcal O_T<\infty$, then
$u^\theta\in L^2(t_0,T;L^\infty)$, and the classical axisymmetric
regularity criterion in this class, for example the criterion recorded in
\cite{Wei2016}, yields smooth continuation through $T$.

Conversely, if $T$ is regular, then $U=u^r/r$ remains bounded on a terminal
neighborhood of $T$.  Consequently $K_-$ is integrable on $[t_0,T]$,
$L_-(t)$ stays bounded, and therefore
\[
 \mathcal O_T=\int_{t_0}^Te^{2L_-(t)}\,dt<\infty.
\]
Thus
\[
 T\text{ regular}
 \quad\Longleftrightarrow\quad
 \mathcal O_T<\infty.
\]
Combining this equivalence with Step 1 gives the theorem.  Taking
contrapositives yields the singular-endpoint formulation.  No estimate
comparing $\mathcal S_T$ and $\mathcal O_T$ directly is used.
\end{proof}

To make the weighted swirl estimate self-contained, define
\[
 X(t)=\int_{\R_+\times\R}F(r,z,t)^2r\,dr\,dz
\]
and
\[
 Y(t)=\int_{\R_+\times\R}|\nabla F|^2r\,dr\,dz
      +\int_{\R}F(0,z,t)^2\,dz.
\]
Multiplying the $F$ equation by $Fr$ gives the exact identity
\[
\boxed{
\frac12X'(t)+\nu Y(t)
=
-2\int U F^2\,r\,dr\,dz.
}
\]
Indeed, physical incompressibility makes the transport term skew in the measure $r\,dr\,dz$, while
\[
 \int_0^\infty F\left(F_{rr}+\frac3rF_r\right)r\,dr
 =-\int_0^\infty F_r^2r\,dr-F(0,z)^2.
\]
The $z$-diffusion contributes $-\int F_z^2r$.  This explains the axis-trace term in the definition of $Y$.

Next, the axisymmetric Gagliardo--Nirenberg inequality gives
\[
 \|F\|_{L^4(rdrdz)}^2
 \le C X^{1/4}Y^{3/4}.
\]
Therefore, with $u_U(t)=\|U(t)\|_{L^2(rdrdz)}$,
\[
 \left|\int UF^2r\,dr\,dz\right|
 \le u_U\|F\|_4^2
 \le C u_U X^{1/4}Y^{3/4}.
\]
Young's inequality with exponents $4/3$ and $4$ implies
\[
 C u_U X^{1/4}Y^{3/4}
 \le \frac{\nu}{4}Y+C\nu^{-3}u_U^4X.
\]
Inserting this into the exact weighted identity and absorbing the $Y$ term gives
\[
 X'+c\nu Y
 \le C\nu^{-3}u_U^4X.
\]
Gronwall's inequality then yields a bound for $X$ and the time integral of $Y$ whenever $u_U\in L_t^4$.  Under the continuation hypotheses assumed in this section, that weighted control feeds the physical-source criterion and therefore implies regular continuation.  Thus
\[
 U\in L_t^4L_x^2
 \Longrightarrow
 \text{regular continuation}.
\]

\begin{remark}
The \(L_t^4L_x^2\) condition is stronger than the feedback loop itself asks
for.  A stronger sign-sensitive sufficient condition would control only the positive
compressive aligned work
\[
\boxed{
\int_0^T
\left(
-\int UF^2\,r\,\dd r\,\dd z
\right)_+
\dd t,
}
\]
or another sufficient sign-aligned variant.  Such a bound is not established here; it is recorded only to separate the sign-sensitive feedback from the stronger mixed-norm criterion above.
\end{remark}

\section{Logarithmic kinetic records and signed critical compression}
\label{sec:log-records}

The preceding continuation criteria concern global or time-integrated
quantities.  We next record a complementary estimate at a fixed spatial scale.
It separates the swirl component of a kinetic record from the meridional
component and shows that sustained logarithmic swirl growth has an
exponentially small time budget.

Fix a point \(x_*=(0,0,z_*)\) on the symmetry axis and a ball \(B_R=B_R(x_*)\).
Let
\[
 E_{\rm kin}=\frac12\|u(0)\|_{L^2(\mathbb R^3)}^2,
 \qquad
 \|\Gamma(t)\|_{L^\infty}\le\Gamma_*,
\]
where the latter follows from the circulation maximum principle.  Define
\[
 A_R(t)^2=\frac1R\int_{B_R}|u(x,t)|^2\,\dd x,
\]
\[
 M_R(t)=\frac1R\int_{B_R}|u^{\rm mer}(x,t)|^2\,\dd x,
 \qquad
 S_R(t)=A_R(t)^2-M_R(t)
       =\frac1R\int_{B_R}|u^\theta(x,t)|^2\,\dd x,
\]
and
\[
 D_{\theta,R}(t)=\int_{B_R}\frac{|u^\theta(x,t)|^2}{r^2}\,\dd x.
\]
The angular variation of the cylindrical basis is part of the full gradient,
so
\[
 D_{\theta,R}(t)\le\int_{B_R}|\nabla u(x,t)|^2\,\dd x.
\]

\medskip
\noindent\textbf{Exponential swirl--dissipation inequality.}
\begin{proposition}\label{prop:swirl-exp-diss}
At every regular time,
\begin{equation}\label{eq:swirl-exp-bound}
 \boxed{
 S_R
 \le
 2\pi\Gamma_*^2
 \log\!\left(
 1+\frac{R D_{\theta,R}}{2\pi\Gamma_*^2}
 \right).
 }
\end{equation}
Consequently,
\begin{equation}\label{eq:swirl-diss-lower}
 \boxed{
 \int_{B_R}|\nabla u|^2\,\dd x
 \ge
 \frac{2\pi\Gamma_*^2}{R}
 \left[
 \exp\!\left(
 \frac{A_R^2-M_R}{2\pi\Gamma_*^2}
 \right)-1
 \right].
 }
\end{equation}
\end{proposition}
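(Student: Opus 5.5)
The plan is a bathtub (extremal-rearrangement) argument in the radial variable, using only the pointwise circulation bound and no regularity beyond local integrability. Write $f=|u^\theta|^2/r^2$, so that
\[
 S_R=\frac1R\int_{B_R}f\,r^2\,\dd x,\qquad D_{\theta,R}=\int_{B_R}f\,\dd x,
\]
and, since $u^\theta=\Gamma/r$, we have pointwise $0\le f\le \Gamma_*^2 r^{-4}$. Enclose $B_R$ in the cylinder $Q_R=\{r<R,\ |z-z_*|<R\}$ and extend $f$ by zero. The problem then becomes: maximize $\int_{Q_R} f r^2\,\dd x$ subject to $0\le f\le\Gamma_*^2r^{-4}$ and $\int_{Q_R} f\,\dd x=D_{\theta,R}$. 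The weight $r^2$ is increasing in $r$, so the extremal fills the cap from the outer radius inward.

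\emph{Step 1 (choice of the threshold radius).} Define $\rho\in(0,R]$ by requiring that the saturated profile $f_*=\Gamma_*^2r^{-4}\mathbf 1_{\rho<r<R}$ on $Q_R$ has mass $D_{\theta,R}$. With $\dd x=2\pi r\,\dd r\,\dd z$ and axial length $2R$,
\[
 \int_{Q_R}f_*\,\dd x=4\pi R\Gamma_*^2\int_\rho^R r^{-3}\,\dd r=2\pi R\Gamma_*^2\Big(\frac1{\rho^2}-\frac1{R^2}\Big).
\]
This gives $R^2/\rho^2=1+RD_{\theta,R}/(2\pi\Gamma_*^2)$. The map $\rho\mapsto\int f_*$ is continuous and decreasing from $+\infty$ to $0$, so such a $\rho$ exists. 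If $D_{\theta,R}=0$, then $u^\theta\equiv0$ on $B_R$ and the claim is trivial.

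\emph{Step 2 (comparison).} Observe that $f-f_*\le0$ where $r>\rho$ (because $f\le$ the cap there) and $f-f_*=f\ge0$ where $r<\rho$. Hence
\[
 (f-f_*)(r^2-\rho^2)\le0 \quad\text{pointwise}.
\]
Integrating over $Q_R$ and using equal masses gives
\[
 \int f r^2\le\int f_* r^2=4\pi R\Gamma_*^2\log\frac R\rho=2\pi R\Gamma_*^2\log\frac{R^2}{\rho^2}.
\]
Dividing by $R$ and substituting Step 1 yields \eqref{eq:swirl-exp-bound}.

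\emph{Step 3 (inversion).} Since $\log$ is increasing, exponentiating \eqref{eq:swirl-exp-bound} gives
\[
 R D_{\theta,R}\ge 2\pi\Gamma_*^2\big(e^{S_R/(2\pi\Gamma_*^2)}-1\big).
\]
Combining this with $S_R=A_R^2-M_R$ and $D_{\theta,R}\le\int_{B_R}|\nabla u|^2$ gives \eqref{eq:swirl-diss-lower}. The last inequality holds because $|\nabla u|^2\ge |u^\theta|^2/r^2$ pointwise for axisymmetric fields: the term $u^\theta/r$ appears as an orthogonal component of $\nabla u$ in the cylindrical frame.

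The only delicate point is the exact constant. A naive split into $r<\rho$ and $r>\rho$ with separate bounds $\rho^2D$ and $\Gamma_*^2/r^2$ yields an extra additive $2\pi\Gamma_*^2$ and no $\log(1+\cdot)$. The bathtub comparison in Step 2 is what removes this loss. The main care needed is to check that replacing the ball by the cylinder only enlarges the admissible class, which it does, since $f$ is extended by zero. At regular times, $f\in L^1(B_R)$ and all integrals are finite, so no approximation is required.
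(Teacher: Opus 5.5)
Your argument is correct and is essentially the paper's proof. Your comparison $(f-f_*)(r^2-\rho^2)\le0$ is the paper's pointwise splitting $|u^\theta|^2\le \ell^2|u^\theta|^2/r^2+(1-\ell^2/r^2)_+\Gamma_*^2r^{-2}$ with $\ell=\rho$, used with the same height-$2R$ bound on $B_R$; the paper keeps $\ell=R\sqrt\alpha$ free and minimizes $\alpha(1+d)-\log\alpha-1$, and its minimizer $\alpha=(1+d)^{-1}$ is exactly your mass-matching choice $\rho^2=R^2/(1+d)$.
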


\begin{proof}
For \(0<\alpha\le1\), put \(\ell=R\sqrt\alpha\).  Since
\(\ell^2/r^2\ge1\) for \(r\le\ell\),
\[
 \begin{aligned}
 S_R
 &\le
 \frac{\ell^2}{R}D_{\theta,R}
 +\frac1R
 \int_{B_R\cap\{r>\ell\}}
 \left(1-\frac{\ell^2}{r^2}\right)|u^\theta|^2\,\dd x.
 \end{aligned}
\]
On the second term use \(|u^\theta|\le\Gamma_*/r\).  The vertical height of
\(B_R\) at any fixed radius is at most \(2R\); hence cylindrical coordinates
give
\[
 \begin{aligned}
 S_R
 &\le
 \alpha R D_{\theta,R}
 +4\pi\Gamma_*^2
 \int_\ell^R
 \left(1-\frac{\ell^2}{r^2}\right)\frac{\dd r}{r}\\
 &=
 \alpha R D_{\theta,R}
 +2\pi\Gamma_*^2(-\log\alpha-1+\alpha).
 \end{aligned}
\]
Write
\[
 d=\frac{R D_{\theta,R}}{2\pi\Gamma_*^2}.
\]
After division by \(2\pi\Gamma_*^2\), the right-hand side is
\(\alpha(d+1)-\log\alpha-1\).  Its minimum on \(0<\alpha\le1\) occurs at
\(\alpha=(1+d)^{-1}\) and equals \(\log(1+d)\).  This proves
\eqref{eq:swirl-exp-bound}; inversion and
\(D_{\theta,R}\le\int_{B_R}|\nabla u|^2\) give \eqref{eq:swirl-diss-lower}.
\end{proof}

The global energy inequality now controls the time spent in a large swirl
record without any component-dominance assumption.

\medskip
\noindent\textbf{Exponential occupation and parabolic-time routing.}
\begin{theorem}\label{thm:parabolic-swirl-routing}
For every interval \(I\subset[0,T)\),
\begin{equation}\label{eq:swirl-occupation}
 \boxed{
 \int_I
 \left[
 e^{S_R(t)/(2\pi\Gamma_*^2)}-1
 \right]\dd t
 \le
 \frac{E_{\rm kin}R}{2\pi\nu\Gamma_*^2}.
 }
\end{equation}
In particular, let
\[
 I_R=[t_R-\tau R^2,t_R]\subset[0,T),
 \qquad \tau>0,
\]
and set
\[
 \langle f\rangle_{I_R}
 =\frac1{\tau R^2}\int_{I_R}f(t)\,\dd t.
\]
Then
\begin{equation}\label{eq:swirl-parabolic-average}
 \boxed{
 \langle S_R\rangle_{I_R}
 \le
 2\pi\Gamma_*^2
 \log\!\left(
 1+\frac{E_{\rm kin}}
 {2\pi\nu\Gamma_*^2\tau R}
 \right).
 }
\end{equation}
Consequently, for any fixed reference length \(R_0>0\), if
\(L_R=\log(R_0/R)\) and
\[
 \langle A_R^2\rangle_{I_R}
 \ge C\Gamma_*^2L_R,
\]
then
\begin{equation}\label{eq:meridional-average-fraction}
 \frac{\langle M_R\rangle_{I_R}}
      {\langle A_R^2\rangle_{I_R}}
 \ge
 1-\frac{2\pi}{C}-o(1)
 \qquad(R\downarrow0).
\end{equation}
Thus the parabolic-time averaged meridional fraction is eventually larger
than \(1/4\) whenever
\[
 \boxed{C>\frac{8\pi}{3}.}
\]
\end{theorem}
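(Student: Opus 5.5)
The plan is to combine \cref{prop:swirl-exp-diss} with the global energy inequality, and then apply Jensen's inequality to pass from an exponential occupation bound to a bound on averages.

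\emph{Step 1: occupation bound.} For \eqref{eq:swirl-occupation}, start from the lower bound \eqref{eq:swirl-diss-lower}, which at each regular time \(t\in I\) gives
\[
 e^{S_R(t)/(2\pi\Gamma_*^2)}-1
 \le\frac{R}{2\pi\Gamma_*^2}\int_{B_R}|\nabla u(t)|^2\,\dd x .
\]
Integrating over \(I\) and enlarging \(B_R\) to \(\mathbb R^3\), the right side is controlled by \(\int_0^T\|\nabla u\|_2^2\,\dd t\). The energy inequality gives \(\nu\int_0^T\|\nabla u\|_2^2\le \frac12\|u(0)\|_2^2=E_{\rm kin}\), which yields \eqref{eq:swirl-occupation}. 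Since the solution is smooth on \([0,T)\), every \(t\in I\) is a regular time, so the pointwise bound holds throughout \(I\).

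\emph{Step 2: parabolic averages.} Take \(I=I_R\) and divide \eqref{eq:swirl-occupation} by \(\tau R^2\). This gives
\[
 \langle e^{S_R/(2\pi\Gamma_*^2)}\rangle_{I_R}
 \le 1+\frac{E_{\rm kin}}{2\pi\nu\Gamma_*^2\tau R}.
\]
Jensen's inequality for the convex exponential bounds \(e^{\langle S_R\rangle/(2\pi\Gamma_*^2)}\) by the left side, and taking logarithms gives \eqref{eq:swirl-parabolic-average}.

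\emph{Step 3: the meridional fraction.} Since \(\langle M_R\rangle=\langle A_R^2\rangle-\langle S_R\rangle\), it suffices to show \(\langle S_R\rangle\le(2\pi/C+o(1))\langle A_R^2\rangle\). Fixing \(\tau\), \(E_{\rm kin}\), \(\nu\), \(\Gamma_*\) and \(R_0\), the logarithm in \eqref{eq:swirl-parabolic-average} equals \(\log(1/R)+O(1)=L_R+O(1)\) as \(R\downarrow0\). Hence \(\langle S_R\rangle\le 2\pi\Gamma_*^2L_R(1+o(1))\). Dividing by \(\langle A_R^2\rangle\ge C\Gamma_*^2L_R\) gives the ratio bound \(2\pi/C+o(1)\), which is \eqref{eq:meridional-average-fraction}. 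Finally, \(1-2\pi/C>1/4\) holds exactly when \(C>8\pi/3\), and the \(o(1)\) term is absorbed for small \(R\).

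The main obstacle is Step 1: I must confirm that the regularity assumed on \([0,T)\) supports the energy inequality with constant \(E_{\rm kin}\), and that \(\Gamma_*\) is the uniform bound from the circulation maximum principle. The constant bookkeeping also needs care, since \(E_{\rm kin}\) already includes the factor \(1/2\). The \(O(1)\) absorption in Step 3 is routine, although it is not uniform in \(\tau\).
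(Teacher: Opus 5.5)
Your proposal is correct and is essentially the paper's proof: integrate the dissipation lower bound from \cref{prop:swirl-exp-diss} over $I$ using the energy inequality $\nu\int_0^T\|\nabla u\|_2^2\,\dd t\le E_{\rm kin}$, apply Jensen to the convex exponential on $I_R$, and use that the logarithm equals $L_R+O(1)$ to obtain the ratio bound $2\pi/C+O(1/L_R)$. Your remark that the $O(1)$ term depends on $\tau$ (and on $E_{\rm kin},\nu,\Gamma_*,R_0$) is accurate. The paper leaves this implicit, since it holds these quantities fixed as $R\downarrow0$.
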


\begin{proof}
Integrating \cref{prop:swirl-exp-diss} in time and using
\[
 \nu\int_0^T\!\int_{\mathbb R^3}|\nabla u|^2\,\dd x\,\dd t
 \le E_{\rm kin}
\]
gives \eqref{eq:swirl-occupation}.  Jensen's inequality for the convex function
\(s\mapsto e^{s/(2\pi\Gamma_*^2)}\) gives
\[
 e^{\langle S_R\rangle_{I_R}/(2\pi\Gamma_*^2)}-1
 \le
 \frac{E_{\rm kin}}
 {2\pi\nu\Gamma_*^2\tau R},
\]
which is \eqref{eq:swirl-parabolic-average}.  Since the logarithm on the right is
\(L_R+O(1)\) as \(R\downarrow0\),
\[
 \langle S_R\rangle_{I_R}
 \le2\pi\Gamma_*^2L_R+O(1).
\]
Using
\(\langle M_R\rangle=\langle A_R^2\rangle-\langle S_R\rangle\)
and the lower bound on \(\langle A_R^2\rangle\) yields \eqref{eq:meridional-average-fraction}.
The right-hand side of \eqref{eq:meridional-average-fraction} exceeds \(1/4\) exactly when
\(C>8\pi/3\).
\end{proof}

There is also a density version which does not average the high and low
record times together.

\medskip
\noindent\textbf{Positive-density logarithmic records.}
\begin{corollary}\label{cor:positive-density-routing}
Fix \(0<\theta<1\), and on \(I_R\) let
\[
 E_R(a)=\{t\in I_R:A_R(t)^2\ge a_R^2\},
\]
\[
 B_R(a,\theta)
 =
 \{t\in E_R(a):M_R(t)\le\theta A_R(t)^2\}.
\]
Then
\begin{equation}\label{eq:bad-density}
 \boxed{
 \frac{|B_R(a,\theta)|}{R^2}
 \le
 \frac{E_{\rm kin}}
 {2\pi\nu\Gamma_*^2 R
 \left[
 e^{(1-\theta)a_R^2/(2\pi\Gamma_*^2)}-1
 \right]}.
 }
\end{equation}
If
\[
 a_R^2\ge C\Gamma_*^2L_R,
 \qquad
 C>\frac{2\pi}{1-\theta},
\]
then \(|B_R(a,\theta)|/R^2\to0\).  Hence whenever
\(|E_R(a)|\ge\eta R^2\) for one fixed \(\eta>0\), asymptotically almost every
high-record time is meridional-dominant.  For \(\theta=1/4\), the sufficient
leading coefficient is again \(C>8\pi/3\).

At the leading endpoint, if
\[
 a_R^2
 \ge
 \frac{2\pi\Gamma_*^2}{1-\theta}
 \left[
 L_R+\alpha\log L_R+c
 \right],
 \qquad \alpha>0,
\]
then
\begin{equation}\label{eq:bad-density-endpoint}
 \frac{|B_R(a,\theta)|}{R^2}=O(L_R^{-\alpha}).
\end{equation}
\end{corollary}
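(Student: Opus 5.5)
The plan is a Chebyshev-type argument built on the exponential occupation bound \eqref{eq:swirl-occupation} of \cref{thm:parabolic-swirl-routing}. On the bad set the swirl share is forced to be large, and the time integral of its exponential is capped.

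\emph{Step 1: pointwise swirl floor on the bad set.} For $t\in B_R(a,\theta)$ we have $A_R(t)^2\ge a_R^2$ and $M_R(t)\le\theta A_R(t)^2$. Since $S_R=A_R^2-M_R$, this gives
\[
 S_R(t)\ge(1-\theta)A_R(t)^2\ge(1-\theta)a_R^2 .
\]
Because $s\mapsto e^{s/(2\pi\Gamma_*^2)}-1$ is increasing and nonnegative for $s\ge0$, every bad time contributes at least $e^{(1-\theta)a_R^2/(2\pi\Gamma_*^2)}-1$ to the integrand in \eqref{eq:swirl-occupation}.

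\emph{Step 2: integrate over the bad set.} Apply \eqref{eq:swirl-occupation} on $I=I_R$, noting the integrand is nonnegative because $S_R\ge0$. Restricting the integral to the measurable set $B_R(a,\theta)\subset I_R$ gives
\[
 |B_R(a,\theta)|\Bigl[e^{(1-\theta)a_R^2/(2\pi\Gamma_*^2)}-1\Bigr]\le\frac{E_{\rm kin}R}{2\pi\nu\Gamma_*^2}.
\]
Dividing by $R^2$ yields \eqref{eq:bad-density}. Measurability of $B_R(a,\theta)$ follows from continuity of $A_R$ and $M_R$ in $t$ at regular times.

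\emph{Step 3: the asymptotic consequences.} For the first, insert $a_R^2\ge C\Gamma_*^2L_R$ into \eqref{eq:bad-density}. The exponential is then at least
\[
 (R_0/R)^{(1-\theta)C/(2\pi)} .
\]
The right-hand side of \eqref{eq:bad-density} is therefore $O\bigl(R^{-1+(1-\theta)C/(2\pi)}\bigr)$, which tends to $0$ exactly when $C>2\pi/(1-\theta)$. If moreover $|E_R(a)|\ge\eta R^2$, then $|B_R|/|E_R|\to0$, which is the statement that almost every high-record time is meridional-dominant. Setting $\theta=1/4$ gives the threshold $C>8\pi/3$.

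For the endpoint case, the hypothesis makes the exponential at least
\[
 e^{c}\,\frac{R_0}{R}\,L_R^{\alpha}.
\]
The right-hand side of \eqref{eq:bad-density} is then at most a constant times
\[
 \frac{1}{R\cdot R^{-1}L_R^\alpha}=O(L_R^{-\alpha}),
\]
for small $R$, where the exponential dominates the $-1$. This proves \eqref{eq:bad-density-endpoint}.

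There is no serious obstacle. The only points needing care are the monotonicity and nonnegativity used in the Chebyshev step, and absorbing the $-1$ in the denominator in the asymptotic regimes.
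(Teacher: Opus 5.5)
Your proposal is correct and follows the paper's proof essentially step for step. The steps are the swirl floor $S_R\ge(1-\theta)a_R^2$ on $B_R(a,\theta)$, then restricting the occupation bound \eqref{eq:swirl-occupation} to the bad set, then reading off the growth $R^{-(1-\theta)C/(2\pi)}$ (or the extra factor $L_R^{\alpha}$ at the endpoint) from the denominator; you also make explicit two points the paper leaves implicit, namely nonnegativity of the integrand and absorption of the $-1$ for small $R$.
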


\begin{proof}
On \(B_R(a,\theta)\),
\[
 S_R=A_R^2-M_R\ge(1-\theta)a_R^2.
\]
Apply \eqref{eq:swirl-occupation} on \(I_R\) and use this lower bound on the integrand to
obtain \eqref{eq:bad-density}.  If
\(a_R^2\ge C\Gamma_*^2L_R\), the denominator in \eqref{eq:bad-density} grows like
\(R^{-(1-\theta)C/(2\pi)}\), up to a fixed multiplicative factor.  The
normalized bad-set measure therefore tends to zero precisely when
\((1-\theta)C/(2\pi)>1\).  The endpoint refinement follows in the same way:
the exponential contributes the additional factor \(L_R^\alpha\).
\end{proof}

\begin{remark}
The time-integrated estimate does not by itself control an arbitrary
parabolic essential supremum.  Indeed, a scalar temporal profile of height
\(H_R\) supported on a set of length comparable to
\[
 R\left[e^{H_R/(2\pi\Gamma_*^2)}-1\right]^{-1}
\]
obeys the same exponential occupation budget while \(H_R\) is arbitrary.
This is only a functional hostile, not a Navier--Stokes trajectory.  It shows
that a selected-record-to-parabolic-supremum theorem requires additional PDE
time-spreading, persistence, or circulation-oscillation information.
\end{remark}

We next connect a different form of swirl concentration to the
scale-critical signed compression already present in the continuation
picture.

For \(0<\varepsilon<1\), define the inner-cone swirl energy
\[
 E_{R,\varepsilon}^{\theta,\mathrm{in}}
 =
 \frac1R
 \int_{B_R\cap\{r<\varepsilon R\}}|u^\theta|^2\,\dd x.
\]

\medskip
\noindent\textbf{Quantitative cone alternative.}
\begin{lemma}\label{lem:swirl-cone}
If
\[
 \varepsilon_R
 =
 2\exp\!\left(
 -\frac{A_R^2}{8\pi\Gamma_*^2}
 \right)<1,
\]
then
\begin{equation}\label{eq:cone-record}
 \boxed{
 M_R+E_{R,\varepsilon_R}^{\theta,\mathrm{in}}
 \ge\frac{A_R^2}{2}.
 }
\end{equation}
\end{lemma}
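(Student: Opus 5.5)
We argue by splitting the swirl part of the record at the cone radius $\ell=\varepsilon_R R$ and bounding the outer part pointwise with the circulation cap. Since $A_R^2=M_R+S_R$ and
\[
 S_R=E_{R,\varepsilon_R}^{\theta,\mathrm{in}}+\frac1R\int_{B_R\cap\{r\ge\varepsilon_R R\}}|u^\theta|^2\,\dd x,
\]
it suffices to show that the outer-cone swirl energy is at most $A_R^2/2$. Then $M_R+E^{\theta,\mathrm{in}}_{R,\varepsilon_R}=A_R^2-(\text{outer})\ge A_R^2/2$, which is \eqref{eq:cone-record}.

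For the outer part we follow the geometric step in the proof of \cref{prop:swirl-exp-diss}. We use $|u^\theta|\le\Gamma_*/r$ and the fact that the vertical height of $B_R$ at each radius is at most $2R$. In cylindrical coordinates $\dd x=r\,\dd r\,\dd\theta\,\dd z$ this gives
\[
 \frac1R\int_{B_R\cap\{r\ge\varepsilon R\}}|u^\theta|^2\,\dd x
 \le\frac1R\cdot 2\pi\cdot 2R\int_{\varepsilon R}^{R}\frac{\Gamma_*^2}{r^2}\,r\,\dd r
 =4\pi\Gamma_*^2\log\frac1\varepsilon .
\]
With $\varepsilon=\varepsilon_R$ we have $\log(1/\varepsilon_R)=A_R^2/(8\pi\Gamma_*^2)-\log 2$. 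The outer energy is therefore at most $A_R^2/2-4\pi\Gamma_*^2\log2<A_R^2/2$, which closes the argument with a small margin to spare.

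The hypothesis $\varepsilon_R<1$ is used only to make the annulus $\varepsilon_R R\le r\le R$ nonempty and the logarithm positive. If it failed, the inner cone would be essentially all of $B_R$, and the statement would be trivial anyway.

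The only point needing care is the measure bookkeeping. The factor $2\pi$ from the angle and $2R$ from the height must produce exactly the constant $4\pi$ that matches the $8\pi$ in the definition of $\varepsilon_R$. The factor $2$ in $\varepsilon_R$ is then harmless slack. No PDE input beyond the circulation maximum principle bound $\|\Gamma\|_\infty\le\Gamma_*$ is needed.
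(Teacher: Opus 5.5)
Your proof is correct and is essentially the paper's argument: you split the swirl record at $r=\varepsilon_R R$, bound the outer part using $|u^\theta|\le\Gamma_*/r$ in cylindrical coordinates, and subtract it from $A_R^2=M_R+S_R$. The paper keeps the exact vertical chord $2\sqrt{R^2-r^2}$ and bounds $\int_\varepsilon^1 s^{-1}\sqrt{1-s^2}\,ds\le\log(2/\varepsilon)$, which uses up the factor $2$ in $\varepsilon_R$ exactly; your cruder height $2R$ instead gives the slightly sharper bound $4\pi\Gamma_*^2\log(1/\varepsilon)$, so for you that factor is only slack.
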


\begin{proof}
The circulation bound and cylindrical coordinates give
\[
 \begin{aligned}
 \frac1R
 \int_{B_R\cap\{r\ge\varepsilon R\}}|u^\theta|^2\,\dd x
 &\le
 4\pi\Gamma_*^2
 \int_\varepsilon^1\frac{\sqrt{1-s^2}}{s}\,\dd s\\
 &\le4\pi\Gamma_*^2\log\frac2\varepsilon.
 \end{aligned}
\]
With \(\varepsilon=\varepsilon_R\), this is \(A_R^2/2\).  Decompose the
total kinetic record into the meridional part, the inner-cone swirl part, and
the complementary swirl part to obtain \eqref{eq:cone-record}.
\end{proof}

Define
\[
 Y_2=r^{-1/2}u^\theta,
 \qquad
 Z_2(t)=\int_{\mathbb R^3}|Y_2(x,t)|^2\,\dd x.
\]
For nontrivial swirl and every fixed \(t_0>0\), smooth axis compatibility near
\(r=0\) and finite energy away from the axis give
\(0<Z_2(t_0)<\infty\).  Put
\[
 U_2(t)
 =
 \frac{\int_{\mathbb R^3}U|Y_2|^2\,\dd x}{Z_2(t)}
\]
when \(Z_2(t)>0\), and \(U_2=0\) otherwise, and define
\[
 \mathcal A_2(t_0,t)=\int_{t_0}^t -U_2(s)\,\dd s.
\]

\medskip
\noindent\textbf{Critical \(q=2\) compression balance.}
\begin{lemma}\label{lem:q2-balance}
One has
\begin{equation}\label{eq:q2-compression-balance}
 \boxed{
 \frac{\dd}{\dd t}Z_2
 +2\nu\int_{\mathbb R^3}|\nabla Y_2|^2\,\dd x
 +\frac{3\nu}{2}
  \int_{\mathbb R^3}\frac{|Y_2|^2}{r^2}\,\dd x
 =-3U_2Z_2.
 }
\end{equation}
Consequently,
\begin{equation}\label{eq:q2-action}
 \boxed{
 \mathcal A_2(t_0,t)
 \ge\frac13\log\frac{Z_2(t)}{Z_2(t_0)}.
 }
\end{equation}
\end{lemma}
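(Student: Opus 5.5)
The plan is to derive \eqref{eq:q2-compression-balance} directly from the $F$ equation. Note that
\[
 Z_2=\int_{\mathbb R^3} rF^2\,\dd x,\qquad |Y_2|^2=rF^2,
\]
because $Y_2=r^{-1/2}u^\theta=r^{1/2}F$. I would multiply $F_t+b\cdot\nabla F=\nu\Delta_5F-2UF$ by $2rF$ and integrate over $\mathbb R^3$. Then I would treat the transport, reaction and diffusion contributions separately, arguing at a fixed regular time $t>t_0$ where everything is smooth and decays.

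\emph{Transport and reaction.} Since $\nabla\cdot u=0$ and $b\cdot\nabla r=u^r$, integration by parts gives
\[
 -\int r\,b\cdot\nabla(F^2)\,\dd x=\int F^2u^r\,\dd x=\int U\,rF^2\,\dd x=\int U|Y_2|^2\,\dd x.
\]
The reaction term contributes $-4\int U\,rF^2\,\dd x=-4\int U|Y_2|^2\,\dd x$. The two combine to $-3\int U|Y_2|^2=-3U_2Z_2$. This is exactly the right-hand side, and it explains the coefficient $3$; when $Z_2=0$ both sides vanish, consistent with the convention $U_2=0$.

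\emph{Diffusion.} I must show
\[
 2\int rF\,\Delta_5F\,\dd x=-2\int|\nabla Y_2|^2\,\dd x-\frac32\int\frac{|Y_2|^2}{r^2}\,\dd x.
\]
The $z$-part is immediate: $\int rFF_{zz}=-\int rF_z^2$, and this matches $-\int|\partial_zY_2|^2$. For the radial part I would work in the measure $2\pi r\,\dd r$. I would substitute $F=r^{-1/2}Y$, so that $F_r=r^{-1/2}Y_r-\tfrac12 r^{-3/2}Y$, and write
\[
 \int_0^\infty r^2F\Bigl(F_{rr}+\tfrac3rF_r\Bigr)\dd r=\int_0^\infty F\,\partial_r(r^3F_r)\,r^{-1}\dd r.
\]
Integrating by parts gives
\[
 -\int_0^\infty r^3F_r\,\partial_r(F/r)\,\dd r=-\int r^2F_r^2\,\dd r+\int rFF_r\,\dd r.
\]
I would then expand both terms in $Y$ and integrate the cross terms $YY_r$ by parts once more. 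The target is $-\int rY_r^2\,\dd r-\tfrac34\int r^{-1}Y^2\,\dd r$, which is the radial form of the claim after multiplying by $2$. The boundary terms at $r=0$ vanish because smooth axis compatibility gives $F$ bounded, hence $Y=O(r^{1/2})$ and $r^2FF_r\to0$. At infinity they vanish by decay.

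This bookkeeping of the Hardy-type constant, namely obtaining exactly $\tfrac32$ with the correct sign, is the main obstacle. I would double-check it by testing the identity on $Y=r^{a}e^{-r^2}$.

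\emph{Consequence \eqref{eq:q2-action}.} Dropping the two nonnegative dissipation terms gives $Z_2'\le-3U_2Z_2$. On any interval where $Z_2>0$ this reads
\[
 \frac{\dd}{\dd t}\log Z_2\le 3(-U_2).
\]
Integrating from $t_0$ to $t$ yields
\[
 \log\frac{Z_2(t)}{Z_2(t_0)}\le3\mathcal A_2(t_0,t).
\]
Since $Z_2(t_0)>0$, Gronwall applied to $Z_2'\le 3|U_2|Z_2$ backward in time shows $Z_2$ cannot vanish on $[t_0,t]$, so the logarithm is defined throughout. If instead one only knows $Z_2(t)\ge0$, the inequality is trivial whenever $Z_2(t)=0$, with the logarithm read as $-\infty$.
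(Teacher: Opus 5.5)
Your proposal is correct and is essentially the paper's argument. The paper substitutes $u^\theta=r^{1/2}Y_2$ into the swirl equation to get $D_tY_2=\nu\bigl(Y_{2,rr}+\tfrac2rY_{2,r}+Y_{2,zz}-\tfrac{3}{4r^2}Y_2\bigr)-\tfrac32UY_2$ and tests it with $2Y_2$, so transport drops out and the constants $3$ and $\tfrac32$ can be read off directly. You instead test the $F$ equation with $2rF$ and recover the same constants: $1-4=-3$ for the compression term, and, after expanding $-\int r^2F_r^2\,dr+\int rFF_r\,dr$, the radial form $-\int rY_r^2\,dr-\tfrac34\int r^{-1}Y^2\,dr$ (the cross terms sum to $[Y^2]_0^\infty=0$). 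So the Hardy-constant bookkeeping you were worried about does close.

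One small correction to your last paragraph. The upper bound $Z_2'\le 3|U_2|Z_2$ cannot propagate positivity forward from $t_0$. What it does give is $Z_2(t)\le Z_2(s)\exp\bigl(3\int_s^t|U_2|\,d\tau\bigr)$ for $t_0\le s\le t$, so positivity at the endpoint $t$ propagates back to all of $[t_0,t]$. Together with your trivial case $Z_2(t)=0$, this justifies the logarithmic integration, a point the paper's proof does not address.
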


\begin{proof}
Writing \(u^\theta=r^{1/2}Y_2\) in the angular velocity equation gives
\[
 D_tY_2
 =
 \nu\left(
 Y_{2,rr}+\frac2rY_{2,r}+Y_{2,zz}
 -\frac{3}{4r^2}Y_2
 \right)
 -\frac32UY_2.
\]
Multiply by \(2Y_2\) and integrate in the physical three-dimensional measure.
The transport term vanishes by incompressibility.  Integration by parts in
the radial and axial variables gives \eqref{eq:q2-compression-balance}; the axis boundary term
vanishes because \(Y_2(0,z,t)=0\).  Discarding the nonnegative dissipative
terms and dividing by \(Z_2\) gives
\[
 \frac{\dd}{\dd t}\log Z_2\le-3U_2,
\]
which integrates to \eqref{eq:q2-action}.
\end{proof}

\medskip
\noindent\textbf{Meridional record or signed compression.}
\begin{theorem}\label{thm:record-compression-fork}
Fix \(t_0>0\) before a hypothetical first singular time \(T\), assume the
swirl is nontrivial, and let
\[
 R_j\downarrow0,\qquad t_j\uparrow T,\qquad
 A_j:=A_{R_j}(t_j)\to\infty.
\]
For all sufficiently large \(j\), at least one of the following holds:
\begin{equation}\label{eq:meridional-record}
 \boxed{
 \frac1{R_j}
 \int_{B_{R_j}(x_*)}|u^{\rm mer}(x,t_j)|^2\,\dd x
 >
 \frac{A_j^2}{4}.
 }
\end{equation}
or
\begin{equation}\label{eq:signed-compression-debt}
 \boxed{
 \mathcal A_2(t_0,t_j)
 \ge
 \frac{A_j^2}{24\pi\Gamma_*^2}
 +\frac23\log A_j
 -\frac13\log Z_2(t_0)
 -\log2.
 }
\end{equation}
In particular, failure of the meridional alternative along an infinite
kinetic-record subsequence forces
\(\mathcal A_2(t_0,t_j)\to\infty\).
\end{theorem}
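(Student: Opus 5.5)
The plan is to combine \cref{lem:swirl-cone} with \cref{lem:q2-balance}, using an elementary comparison between the inner-cone swirl energy and $Z_2$.

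Fix $j$ large. Since $A_j\to\infty$, the quantity $\varepsilon_j:=2\exp(-A_j^2/(8\pi\Gamma_*^2))$ is eventually $<1$, so \cref{lem:swirl-cone} applies at $(R_j,t_j)$ and gives
\[
M_{R_j}(t_j)+E^{\theta,\mathrm{in}}_{R_j,\varepsilon_j}(t_j)\ge \frac{A_j^2}{2}.
\]
Suppose the meridional alternative \eqref{eq:meridional-record} fails, i.e.\ $M_{R_j}(t_j)\le A_j^2/4$. Then the inner-cone swirl record is large:
\[
E^{\theta,\mathrm{in}}_{R_j,\varepsilon_j}(t_j)\ge \frac{A_j^2}{4}.
\]

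Next I convert this into a lower bound on $Z_2(t_j)$. On the cone $\{r<\varepsilon_jR_j\}$ we have $|u^\theta|^2=r|Y_2|^2\le \varepsilon_jR_j|Y_2|^2$. Hence
\[
E^{\theta,\mathrm{in}}_{R_j,\varepsilon_j}
\le \frac1{R_j}\,\varepsilon_jR_j\int_{B_{R_j}}|Y_2|^2\,\dd x
\le \varepsilon_j Z_2(t_j),
\]
which gives
\[
Z_2(t_j)\ge \frac{A_j^2}{4\varepsilon_j}=\frac{A_j^2}{8}\exp\!\left(\frac{A_j^2}{8\pi\Gamma_*^2}\right)>0.
\]

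Finally I insert this into the action inequality \eqref{eq:q2-action}, which is valid on the smooth interval $[t_0,t_j]$ since $t_j<T$ and $Z_2(t_0)>0$ for nontrivial swirl. Taking logarithms,
\[
\frac13\log Z_2(t_j)\ge \frac{A_j^2}{24\pi\Gamma_*^2}+\frac23\log A_j-\log 2,
\]
because $\tfrac13\log 8=\log 2$. Subtracting $\tfrac13\log Z_2(t_0)$ yields exactly \eqref{eq:signed-compression-debt}. The right-hand side of \eqref{eq:signed-compression-debt} tends to $\infty$ as $A_j\to\infty$, which proves the final assertion along any subsequence on which the meridional alternative fails.

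I expect no serious obstacle here. The only points needing care are three. First, the sign and normalization in the cone estimate: the weight $r\le\varepsilon R$ must be applied with the $1/R$ prefactor so that the factors of $R$ cancel. Second, the use of \eqref{eq:q2-action} requires only the preterminal smoothness already assumed, with $t_0>0$ fixed. Third, the constants ($\tfrac14$ from the failed alternative and the factor $2$ in $\varepsilon_j$) must combine into $\log 8$, producing the stated $-\log 2$ after division by $3$.
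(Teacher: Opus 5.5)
Your proposal is correct and follows essentially the same route as the paper. The paper also assumes the meridional alternative fails, uses \cref{lem:swirl-cone} to get an inner-cone swirl record of at least $A_j^2/4$, and uses $r<\varepsilon_jR_j$ on the cone to obtain $Z_2(t_j)\ge \varepsilon_j^{-1}E^{\theta,\mathrm{in}}_{R_j,\varepsilon_j}\ge \tfrac{A_j^2}{8}e^{A_j^2/(8\pi\Gamma_*^2)}$; substituting this into the action inequality of \cref{lem:q2-balance} gives the same constants you derive.
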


\begin{proof}
Assume \eqref{eq:meridional-record} fails.  Then \cref{lem:swirl-cone} gives
\[
 E_{R_j,\varepsilon_j}^{\theta,\mathrm{in}}
 \ge\frac{A_j^2}{4},
 \qquad
 \varepsilon_j
 =
 2e^{-A_j^2/(8\pi\Gamma_*^2)}.
\]
On the inner cone \(r<\varepsilon_jR_j\),
\(r^{-1}\ge(\varepsilon_jR_j)^{-1}\), so
\[
 \begin{aligned}
 Z_2(t_j)
 &=\int r^{-1}|u^\theta|^2\,\dd x\\
 &\ge
 \frac1{\varepsilon_jR_j}
 \int_{B_{R_j}\cap\{r<\varepsilon_jR_j\}}
 |u^\theta|^2\,\dd x\\
 &=
 \frac1{\varepsilon_j}
 E_{R_j,\varepsilon_j}^{\theta,\mathrm{in}}
 \ge
 \frac{A_j^2}{8}
 e^{A_j^2/(8\pi\Gamma_*^2)}.
 \end{aligned}
\]
Substitute this bound into \eqref{eq:q2-action}.  Expanding the logarithm gives
exactly \eqref{eq:signed-compression-debt}.
\end{proof}

\begin{remark}
The theorem is a branch-routing statement, not a regularity theorem.  It does
not show that the signed critical action is bounded, nor does it exclude the
meridional Type-II record.  Likewise, the parabolic average and density
estimates above do not convert a selected record into the space--time
essential supremum required by a separate regularity criterion.
\end{remark}

\section{Quadratic circulation balance and the scalar-weight mismatch}
\label{sec:circulation-owner}

The circulation equation is homogeneous:
\[
 \partial_t\Gamma+b\cdot\nabla\Gamma
 =\nu\left(\Gamma_{rr}-\frac1r\Gamma_r+\Gamma_{zz}\right).
\]
It therefore carries a quadratic circulation quantity that is not generated by the $G$-source.

\medskip
\noindent\textbf{Quadratic circulation balance.}
\begin{theorem}
For every smooth finite-energy pole-compatible solution for which the boundary
terms vanish,
\[
 \boxed{
 \frac d{dt}\int_{\R_+\times\R}\Gamma^2r\,dr\,dz
 =-2\nu\int_{\R_+\times\R}r|\nabla\Gamma|^2\,dr\,dz.
 }
\]
In particular the quadratic circulation stock is nonincreasing.
\end{theorem}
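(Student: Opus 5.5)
The plan is a direct weighted energy computation for the homogeneous circulation equation in the measure $r\,dr\,dz$, following the same steps as the $L^p$ identity in \cref{thm:oldceiling}. The one difference is the weight: here the first-order radial terms cancel down to an axis term that vanishes by pole compatibility, rather than leaving a favorable trace.

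\emph{Step 1 (time derivative and transport).} Multiply
\[
\partial_t\Gamma+b\cdot\nabla\Gamma=\nu\Bigl(\Gamma_{rr}-\tfrac1r\Gamma_r+\Gamma_{zz}\Bigr)
\]
by $2\Gamma r$ and integrate over $\R_+\times\R$. The time term gives $\frac{d}{dt}\int\Gamma^2 r\,dr\,dz$. For the transport term, write $2\Gamma\, b\cdot\nabla\Gamma=b\cdot\nabla(\Gamma^2)$ and integrate by parts. By axisymmetric incompressibility $\partial_r(ru^r)+\partial_z(ru^z)=0$, exactly as in the proof of \cref{thm:oldceiling}, this term vanishes. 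The boundary contributions are the flux $ru^r\Gamma^2$ at $r=0$, which is zero, and the terms at infinity, which vanish by the stated hypothesis.

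\emph{Step 2 (axial diffusion).} The $z$-diffusion is routine: $\int 2\Gamma\Gamma_{zz}\,r\,dr\,dz=-2\int\Gamma_z^2\,r\,dr\,dz$.

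\emph{Step 3 (radial diffusion; the only nonroutine step).} Integrating once by parts,
\[
\int_0^\infty 2\Gamma\Gamma_{rr}\,r\,dr=-2\int_0^\infty\Gamma_r^2\,r\,dr-\int_0^\infty 2\Gamma\Gamma_r\,dr,
\]
where the boundary term $[2r\Gamma\Gamma_r]_0^\infty$ vanishes at the axis because of the factor $r$, and at infinity by hypothesis. The drift term contributes
\[
\int_0^\infty 2\Gamma\Bigl(-\tfrac1r\Gamma_r\Bigr)r\,dr=-\int_0^\infty 2\Gamma\Gamma_r\,dr.
\]
The two first-order pieces therefore add rather than cancel:
\[
-2\int_0^\infty\partial_r(\Gamma^2)\,dr=2\Gamma(0,z)^2-\lim_{r\to\infty}2\Gamma^2.
\]

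\emph{Main obstacle: the axis trace.} For the formal part of the argument, pole compatibility settles it. Since $\Gamma=ru^\theta$ with $u^\theta$ bounded near the axis (indeed $u^\theta=O(r)$), we get $\Gamma(0,z)=0$, so no axis trace survives. This is the contrast with the $F$ and $g$ identities, where the sign of the weighted operator produced a nonzero, favorable trace. The limit at infinity is exactly the boundary term assumed to vanish in the statement. I would flag this point explicitly, since $\Gamma^2 r=r^3|u^\theta|^2$ is not controlled by finite energy alone; this is why the theorem includes the vanishing-boundary-term hypothesis.

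\emph{Conclusion.} Collecting the terms and multiplying by $\nu$ gives
\[
\frac{d}{dt}\int\Gamma^2 r\,dr\,dz=-2\nu\int r\bigl(\Gamma_r^2+\Gamma_z^2\bigr)\,dr\,dz,
\]
which is the boxed identity. Monotonicity of the quadratic circulation stock follows immediately, since the right-hand side is nonpositive.
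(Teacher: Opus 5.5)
Your proposal is correct and follows essentially the same route as the paper. Both arguments multiply by $2\Gamma r$ and remove the transport term using $\partial_r(ru^r)+\partial_z(ru^z)=0$. Both then integrate the radial diffusion by parts, so that the two first-order pieces combine into $-4\int_0^\infty\Gamma\Gamma_r\,dr=-2[\Gamma^2]_0^\infty$, which vanishes by pole compatibility $\Gamma(0,z,t)=0$ and decay at infinity.
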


\begin{proof}
Multiply
\[
 \Gamma_t+b\cdot\nabla\Gamma
 =\nu\left(\Gamma_{rr}-\frac1r\Gamma_r+\Gamma_{zz}\right)
\]
by $2\Gamma r$ and integrate over $\R_+\times\R$.
The time derivative is
\[
 2\int \Gamma\Gamma_t r\,dr\,dz
 =\frac{d}{dt}\int \Gamma^2r\,dr\,dz.
\]
For the transport term,
\[
 2\Gamma\,b\cdot\nabla\Gamma=b\cdot\nabla(\Gamma^2),
\]
so
\[
 \int b\cdot\nabla(\Gamma^2)r\,dr\,dz
 =-\int \Gamma^2
 \bigl[\partial_r(ru^r)+\partial_z(ru^z)\bigr]dr\,dz=0
\]
by axisymmetric incompressibility and the boundary assumptions.

For the radial diffusion we compute
\[
 \begin{aligned}
 \int_0^\infty 2\Gamma r
 \left(\Gamma_{rr}-\frac1r\Gamma_r\right)dr
 &=2\int_0^\infty r\Gamma\Gamma_{rr}\,dr
   -2\int_0^\infty\Gamma\Gamma_r\,dr\\
 &=-2\int_0^\infty r\Gamma_r^2\,dr
   -4\int_0^\infty\Gamma\Gamma_r\,dr.
 \end{aligned}
\]
The last integral is a boundary term:
\[
 -4\int_0^\infty\Gamma\Gamma_r\,dr
 =-2[\Gamma^2]_{0}^{\infty}.
\]
Pole compatibility gives $\Gamma(0,z,t)=0$, and decay gives $\Gamma(\infty,z,t)=0$, so this term vanishes.  The axial diffusion similarly gives
\[
 \int_{\R}2\Gamma r\Gamma_{zz}\,dz
 =-2\int_{\R}r\Gamma_z^2\,dz.
\]
After integration in both variables,
\[
 \frac{d}{dt}\int\Gamma^2r\,dr\,dz
 =-2\nu\int r\bigl(\Gamma_r^2+\Gamma_z^2\bigr)\,dr\,dz,
\]
which is the claimed identity.
\end{proof}

The preceding balance is global only when its quadratic stock is finite.  The next
elementary example is included to prevent a misleading use of the identity as a
finite global event budget under the basic finite-energy assumptions.

\medskip
\noindent\textbf{Finite energy and bounded circulation do not imply finite quadratic circulation stock.}
\begin{proposition}\label{prop:infinite-circulation-stock}
There exist smooth axisymmetric finite-energy velocity fields with bounded circulation
\(\Gamma=ru^\theta\) for which
\[
 \int_{\mathbb R_+\times\mathbb R}\Gamma^2 r\,dr\,dz=\infty.
\]
In particular, finiteness of kinetic energy together with \(\Gamma\in L^\infty\)
does not justify summing the global quadratic-circulation balance over an
arbitrary infinite sequence of localized events.
\end{proposition}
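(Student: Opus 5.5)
The plan is to build an explicit example: a smooth, compactly supported swirl profile placed at large radius. Its circulation stays bounded and its kinetic energy is finite, while the weight $r$ in the quadratic stock makes the integral of $\Gamma^2$ diverge. The simplest choice is a superposition of disjoint swirl rings. Fix a bump $\phi\in C_c^\infty(\R^2)$ supported in the unit disc with $0\le\phi\le1$. Choose radii $\rho_k=2^k$ and widths $w_k\le1$, to be tuned below. Define
\[
 u^\theta(r,z)=\sum_{k\ge1}\frac{1}{\rho_k}\phi\!\left(\frac{r-\rho_k}{w_k},\frac{z}{w_k}\right),
 \qquad u^r=u^z=0.
\]
The rings have disjoint supports and lie far from the axis. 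Hence $u=u^\theta e_\theta$ is smooth, axisymmetric and divergence-free, since a purely azimuthal field depending only on $(r,z)$ is automatically solenoidal. It is also trivially pole-compatible.

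The next step is to compute the three quantities on the $k$-th ring, where $r\asymp\rho_k$ and the cross-section has area $\asymp w_k^2$. The circulation satisfies $\Gamma=ru^\theta\lesssim\rho_k\cdot\rho_k^{-1}=O(1)$ uniformly in $k$, so $\Gamma\in L^\infty$. The kinetic energy of the ring is
\[
 \int|u|^2\,dx\asymp\rho_k\,w_k^2\,\rho_k^{-2}=w_k^2/\rho_k.
\]
The quadratic stock of the ring is
\[
 \int\Gamma^2 r\,dr\,dz\asymp\rho_k\,w_k^2.
\]
So I need $\sum_k w_k^2/\rho_k<\infty$ and $\sum_k\rho_k w_k^2=\infty$. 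Taking $w_k=\rho_k^{-1/2}=2^{-k/2}$ gives energies $\asymp2^{-2k}$, which are summable, and stocks $\asymp1$ each, so the stock series diverges. The simpler choice $w_k\equiv1$ works as well, giving energies $\asymp2^{-k}$ and stocks $\asymp2^k$. I will use $w_k=1$ and then need only $\rho_{k+1}-\rho_k>2$ for the supports to be disjoint.

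It then remains to verify smoothness and finite energy rigorously. The sum is locally finite, so smoothness of the Cartesian field follows on each compact set. The profiles vanish near the axis, so there are no singularities at $r=0$. The velocity is in $L^2$ because the energy series converges. I would also note that the gradient is in $L^2$, so the field is admissible as finite-energy data, possibly in $H^1$ as well. This holds because derivatives of the $k$-th ring are $O(\rho_k^{-1})$ on a volume $O(\rho_k)$. I expect this bookkeeping, not the construction, to be the main point needing care. The delicate part is confirming that the summed field is genuinely a legitimate finite-energy datum in the class under consideration, and that the boundary terms vanish as the statement requires.

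The interpretive conclusion then follows. The balance identity of the preceding theorem expresses $\frac{d}{dt}\int\Gamma^2 r$ through a quantity that is infinite for this datum, so the identity cannot be integrated into a finite global budget. Summing localized decrements over infinitely many events is therefore not justified by energy and $\Gamma\in L^\infty$ alone. If smooth solutions rather than data are wanted, I would take the local smooth Navier--Stokes solution from this datum. Finite speed of influence is not available, so I would instead argue that the $L^2$-type stock remains infinite for short times by a lower bound on each far ring via the local energy inequality. This step would be optional, since the proposition only asserts existence of the velocity fields.
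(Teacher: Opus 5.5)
Your construction is correct, and it uses the same mechanism as the paper: a pure-swirl field with $|u^\theta|\lesssim r^{-1}$ placed at large radius. Then $\Gamma=ru^\theta$ stays bounded and the energy weight $r|u^\theta|^2$ is integrable, while the stock weight $r^3|u^\theta|^2$ is not. With $w_k=1$, each ring satisfies $|\Gamma|\le(\rho_k+1)/\rho_k$, has energy $\lesssim\rho_k^{-1}=2^{-k}$, and has stock $\gtrsim\rho_k$. The locally finite sum of smooth pieces vanishing for $r<1$ is a smooth divergence-free Cartesian field.

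The paper instead takes the single closed-form profile $u^\theta=\chi(z)\,r(1+r^2)^{-5/4}$ with $\chi\in C_c^\infty(\R)$. Its radial decay $r^{-3/2}$ lies in the window where $\int^\infty rf^2\,dr<\infty$ but $\int^\infty r^3f^2\,dr=\infty$, with $rf$ bounded. Smoothness at the axis is immediate, since the Cartesian field is $\chi(z)(1+r^2)^{-5/4}(-x_2,x_1,0)$. All three checks are then one-line radial integrals, with no disjointness or summation bookkeeping. Your lacunary rings trade that simplicity for flexibility in tuning the pieces; for example, $w_k=\rho_k^{-1/2}$ gives unit stock per ring and summable energy $\rho_k^{-2}$.

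Two small remarks. First, with $\rho_k=2^k$ you have $\rho_2-\rho_1=2$, so your stated condition $\rho_{k+1}-\rho_k>2$ fails at $k=1$. This is harmless: separation $\ge2$ suffices because $\operatorname{supp}\phi$ lies in the open unit disc, or you can simply start at $k=2$. Second, neither vanishing of boundary terms nor the optional evolution argument is needed. The proposition only asserts existence of such fields, and the infinitude of the stock is by itself what prevents using the balance as a finite global budget.
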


\begin{proof}
Choose a nonzero function \(\chi\in C_c^\infty(\mathbb R)\) and define a pure-swirl
field by
\[
 u^\theta(r,z)=\chi(z)\,r(1+r^2)^{-5/4},
 \qquad
 u^r=u^z=0.
\]
The field is smooth at the axis because \(u^\theta(r,z)=O(r)\) as \(r\downarrow0\),
and it is divergence free because a \(\theta\)-independent pure-swirl field has
zero divergence.

Its three-dimensional kinetic energy is, up to the harmless factor \(2\pi\),
\[
 \int_{\mathbb R}\!\int_0^\infty |u^\theta(r,z)|^2r\,dr\,dz
 =
 \|\chi\|_{L^2(\mathbb R)}^2
 \int_0^\infty r^3(1+r^2)^{-5/2}\,dr.
\]
Near \(r=0\) the radial integrand is \(O(r^3)\), while as \(r\to\infty\) it is
\(O(r^{-2})\).  Hence the integral is finite.

The circulation is
\[
 \Gamma(r,z)
 =
 r u^\theta(r,z)
 =
 \chi(z)\,r^2(1+r^2)^{-5/4}.
\]
The radial factor is continuous, vanishes at \(r=0\), and behaves like
\(r^{-1/2}\) at infinity.  Thus \(\Gamma\in L^\infty\).

On the other hand,
\[
 \int_{\mathbb R}\!\int_0^\infty \Gamma(r,z)^2r\,dr\,dz
 =
 \|\chi\|_{L^2(\mathbb R)}^2
 \int_0^\infty r^5(1+r^2)^{-5/2}\,dr.
\]
The last integrand tends to \(1\) as \(r\to\infty\), so the radial integral
diverges.  This proves the first assertion.

For a bounded meridional region the same quadratic stock is finite, but
localizing the circulation equation introduces flux through the artificial
boundary or collar.  Therefore the global monotonicity identity cannot be
used as a finite event-counting budget unless the global stock is separately
known to be finite.
\end{proof}

On a sign-definite circulation region write $\phi=\log|\Gamma|$ and, for
$p>0$ and radial exponent $k$, set
\[
 \rho_{p,k}=|\Gamma|^p r^{-k},\qquad
 c_p=b-p\nu\nabla\phi.
\]

\medskip
\noindent\textbf{Weighted circulation-current identity.}
\begin{proposition}\label{prop:weighted-current}
One has
\[
\boxed{
\partial_t\rho_{p,k}+\nabla\!\cdot(\rho_{p,k}c_p)
=\rho_{p,k}\left[(k+1)q
+p\nu\left((k-1)\frac{\phi_r}{r}-(p-1)|\nabla\phi|^2\right)\right],
}
\]
where $q=-u^r/r$.  The current action
\[
 \iint\rho_{p,k}|c_p|^2\,dr\,dz\,dt
\]
has physical scale weight $R^{2-k}$, independently of $p$.  Compression disappears from the continuity equation only when $k=-1$, whereas the action is scale invariant only when $k=2$.
\end{proposition}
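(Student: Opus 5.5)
The plan is a direct computation in the flat meridional variables $(r,z)$ with Lebesgue measure $dr\,dz$, followed by a scaling count. The point to fix first is that the divergence in the statement is the flat two-dimensional one, $\nabla\cdot=\partial_r(\cdot)^r+\partial_z(\cdot)^z$. Axisymmetric incompressibility $\partial_r(ru^r)+\partial_z(ru^z)=0$ then gives
\[
\nabla\cdot b=\partial_ru^r+\partial_zu^z=-\frac{u^r}{r}=q .
\]
So $b$ is compressible in the flat measure, and this is the sole source of the $q$ terms.

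\emph{Step 1: transport part.} On a sign-definite region $\phi=\log|\Gamma|$ is smooth, and $\rho:=\rho_{p,k}=e^{p\phi}r^{-k}$. I expand
\[
\partial_t\rho+\nabla\cdot(\rho b)=\rho\bigl(p\phi_t+p\,b\cdot\nabla\phi+kq+q\bigr).
\]
Here the term $kq$ comes from $b\cdot\nabla r^{-k}=-k\,r^{-k}u^r/r$. Dividing the circulation equation by $\Gamma$ and using $\Delta\Gamma/\Gamma=\Delta\phi+|\nabla\phi|^2$, with $\Delta$ the flat Laplacian in $(r,z)$, I get
\[
\phi_t+b\cdot\nabla\phi=\nu\Bigl(\Delta\phi+|\nabla\phi|^2-\frac{\phi_r}{r}\Bigr).
\]
Hence the transport part equals
\[
\rho\Bigl[(k+1)q+p\nu\Bigl(\Delta\phi+|\nabla\phi|^2-\frac{\phi_r}{r}\Bigr)\Bigr].
\]

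\emph{Step 2: diffusive flux.} Using $\nabla\rho=\rho\bigl(p\nabla\phi-k\,e_r/r\bigr)$, the diffusive flux contributes
\[
-\nabla\cdot(\rho\,p\nu\nabla\phi)=-p\nu\rho\Bigl(\Delta\phi+p|\nabla\phi|^2-k\frac{\phi_r}{r}\Bigr).
\]
Adding this to Step 1, the $\Delta\phi$ terms cancel. The $|\nabla\phi|^2$ terms combine to $(1-p)$, and the $\phi_r/r$ terms combine to $(k-1)$. This produces exactly the boxed identity.

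\emph{Step 3: scale weights.} Under $u_\lambda(x,t)=\lambda u(\lambda x,\lambda^2 t)$ the circulation $\Gamma$ has degree $0$, so $\rho_{p,k}$ has scale $R^{-k}$ independently of $p$. The current $c_p$ has velocity scale $R^{-1}$, since both $b$ and $\nu\nabla\phi$ scale like $R^{-1}$; hence $|c_p|^2$ scales like $R^{-2}$. The measure $dr\,dz\,dt$ scales like $R^4$. The action therefore scales like $R^{4-2-k}=R^{2-k}$. The compression coefficient $(k+1)$ vanishes only at $k=-1$, and the action exponent $2-k$ vanishes only at $k=2$, so no single $k$ achieves both.

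The main obstacle is bookkeeping rather than analysis. First, the factor $k+1$ rests on the flat divergence of $b$ being $q$ and not $0$, so the choice of measure must be stated explicitly. Second, the operator $\Gamma_{rr}-\Gamma_r/r$ must be correctly split into the flat Laplacian and a first-order term. Sign-definiteness of $\Gamma$ is used only so that $\phi$ is defined and smooth on the region.
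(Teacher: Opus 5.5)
Your proposal is correct and is essentially the paper's own argument. The paper also uses the flat meridional divergence $\nabla\cdot b=-u^r/r=q$, divides the circulation equation by $\Gamma$ to obtain $D_t\phi=\nu(\Delta\phi-\phi_r/r+|\nabla\phi|^2)$, and subtracts $p\nu\nabla\cdot(\rho_{p,k}\nabla\phi)$ using $\nabla\log\rho_{p,k}=p\nabla\phi-(k/r)e_r$. That is exactly your Steps 1--2 written logarithmically, and your scaling count $R^{-k}\cdot R^{-2}\cdot R^{4}=R^{2-k}$ coincides with the paper's.
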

\begin{proof}
On a sign-definite component the logarithm is smooth and
\[
 \nabla\Gamma=\Gamma\nabla\phi.
\]
Dividing the circulation equation by $\Gamma$ and using
\[
 \frac{\Gamma_{rr}}{\Gamma}=\phi_{rr}+\phi_r^2,
 \qquad
 \frac{\Gamma_{zz}}{\Gamma}=\phi_{zz}+\phi_z^2
\]
gives
\[
 D_t\phi
 =\nu\left(\Delta\phi-\frac{\phi_r}{r}+|\nabla\phi|^2\right),
\]
where $D_t=\partial_t+b\cdot\nabla$ and $\Delta=\partial_{rr}+\partial_{zz}$ in the meridional variables.

Since
\[
 \log\rho_{p,k}=p\phi-k\log r,
\]
we have
\[
 D_t\log\rho_{p,k}
 =pD_t\phi-k\frac{u^r}{r}
 =pD_t\phi+kq.
\]
Furthermore, meridional incompressibility in cylindrical coordinates gives
\[
 \nabla\cdot b=\partial_ru^r+\partial_zu^z=-\frac{u^r}{r}=q.
\]
Hence
\[
 \partial_t\rho_{p,k}+\nabla\cdot(\rho_{p,k}b)
 =\rho_{p,k}\bigl(pD_t\phi+(k+1)q\bigr).
\]
Now replace $b$ by $c_p=b-p\nu\nabla\phi$.  Since
\[
 \nabla\cdot(\rho_{p,k}c_p)
 =\nabla\cdot(\rho_{p,k}b)
 -p\nu\nabla\cdot(\rho_{p,k}\nabla\phi),
\]
we obtain
\[
 \begin{aligned}
 \partial_t\rho_{p,k}+\nabla\cdot(\rho_{p,k}c_p)
 ={}&\rho_{p,k}\bigl(pD_t\phi+(k+1)q\bigr)\\
 &-p\nu\rho_{p,k}
 \left(\Delta\phi+\nabla\log\rho_{p,k}\cdot\nabla\phi\right).
 \end{aligned}
\]
But
\[
 \nabla\log\rho_{p,k}
 =p\nabla\phi-\frac{k}{r}e_r.
\]
Substituting the formula for $D_t\phi$ and cancelling $p\nu\Delta\phi$ gives
\[
 \partial_t\rho_{p,k}+\nabla\cdot(\rho_{p,k}c_p)
 =\rho_{p,k}\left[(k+1)q
 +p\nu\left((k-1)\frac{\phi_r}{r}-(p-1)|\nabla\phi|^2\right)\right].
\]

For the scaling statement, use Navier--Stokes scaling at physical length $R$.  The circulation $\Gamma=ru^\theta$ is scale invariant, so $|\Gamma|^p$ contributes no power of $R$.  The factor $r^{-k}$ contributes $R^{-k}$; the meridional space-time element $dr\,dz\,dt$ contributes $R^4$; and the current velocity $c_p$, like $b$, has size $R^{-1}$, so $|c_p|^2$ contributes $R^{-2}$.  Therefore
\[
 \iint\rho_{p,k}|c_p|^2
\]
scales like
\[
 R^{-k}R^4R^{-2}=R^{2-k}.
\]
This is independent of $p$.  In the reaction term of the continuity identity, the compression coefficient is $k+1$, so it vanishes exactly for $k=-1$.  The action is scale zero exactly for $2-k=0$, namely $k=2$.  The two requirements are therefore incompatible.
\end{proof}

\medskip
\noindent\textbf{No scalar circulation weight gives both properties.}
\begin{corollary}
No circulation power $|\Gamma|^p$ and radial power $r^{-k}$ simultaneously
produces both a compression-free global continuity law and a scale-zero
current action.  Thus attempts to close recurrence by changing only the scalar
circulation weight are structurally insufficient.
\end{corollary}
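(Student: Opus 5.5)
The corollary follows directly from \cref{prop:weighted-current}, so the plan is to read off its two exponent conditions and show they are incompatible.

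First, I will fix an arbitrary pair $(p,k)$ with $p>0$ and $k\in\R$, and work on a sign-definite circulation region, where $\rho_{p,k}=|\Gamma|^pr^{-k}$ and the current $c_p$ are defined.

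Second, I will make the phrase ``compression-free global continuity law'' precise. It means that the coefficient of $q=-u^r/r$ in the reaction term of the boxed identity vanishes identically. The compression factor $q$ is independent of the viscous terms $\phi_r/r$ and $|\nabla\phi|^2$: one can take a flow with $q\neq0$ at a point where $\nabla\phi=0$. Hence the only way to remove compression is to make the coefficient $k+1$ vanish, which forces $k=-1$.

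Third, I will make ``scale-zero current action'' precise using the scaling computation in \cref{prop:weighted-current}. The action $\iint\rho_{p,k}|c_p|^2$ scales like $R^{2-k}$ for every $p$, so it is scale invariant exactly when $k=2$.

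These two requirements give $-1=2$, a contradiction, so no pair $(p,k)$ has both properties. Since the action weight does not depend on $p$, adjusting the circulation power cannot help either. This gives the structural insufficiency stated in the corollary.

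The only delicate point is the independence claim in the second step. I would justify it by exhibiting a local configuration: a point with $\nabla\Gamma=0$, $\Gamma\neq0$ and $u^r\neq0$. There the reaction term reduces to $(k+1)q\rho_{p,k}$, so compression can be absent for all flows only if $k=-1$.
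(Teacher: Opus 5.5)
Your proposal is correct and follows the paper's proof: both read off from \cref{prop:weighted-current} that a compression-free continuity law forces $k=-1$, that a scale-zero action forces $k=2$, and that the action exponent $R^{2-k}$ does not depend on $p$. Your extra check, at a point with $\nabla\phi=0$ and $q\neq0$, shows that the coefficient $k+1$ cannot be cancelled by the viscous terms; the paper leaves this small point implicit.
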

\begin{proof}
By \cref{prop:weighted-current}, compression-free transport requires $k=-1$, while scale invariance of the current action requires $k=2$.  Since no real number $k$ can satisfy both conditions, no choice of the circulation power $p$ repairs the mismatch.  The independence of the action exponent from $p$ shows that changing only the power of $|\Gamma|$ cannot alter this conclusion.
\end{proof}

\begin{remark}
The intrinsic source-capacity estimate
\[
 \mathcal S_R\le C\Gamma_*^2\Theta_R
\]
shows that a strict source-active tile with $\mathcal S_R\ge s_*>0$ has a
positive quadratic-circulation occupancy floor.  Such a packet cannot evade
the circulation balance merely by making the $\Gamma^2$ occupancy vanish.  This does not
eliminate genuinely low-occupancy diffuse tails, changing ownership, or
boundary/collar reassembly.
\end{remark}

\begin{remark}
For a proportional intrinsic block, the physical circulation transport/pair-work
charge has positive scale homogeneity (schematically $O(R)$).  A supergeometric
sequence of descendant scales can therefore have finite total physical cost while
retaining order-one normalized events.  The circulation balance is a routing and
nonlayering theorem, not by itself a Zeno-proof of regularity.
\end{remark}

\section{Compactness of rescaled strain generators}
Let $I_n=[t_n,t_n+\theta_n]$ and
\[
\widehat C_n(s)=\theta_n C_n(t_n+\theta_ns),\qquad 0<s<1.
\]
Then exactly
\[
\|\widehat C_n\|_{L^1}=\int_{I_n}\|C_n\|,
\qquad
\operatorname{Var}(\widehat C_n)=\theta_n\operatorname{Var}_{I_n}(C_n).
\]

\medskip
\noindent\textbf{Strong active-clock compactness.}
\begin{lemma}\label{lem:bvcompact}
If
\[
\int_{I_n}\|C_n\|\le M,
\qquad
\theta_n\operatorname{Var}_{I_n}(C_n)\le M_{BV},
\]
then, after a subsequence,
\[
\widehat C_n\to C_*
\quad\text{strongly in }L^1(0,1).
\]
Moreover if $Z_C'=CZ_C$, $Z_C(0)=I$, then
\[
\boxed{
\|Z_C(1)-Z_D(1)\|
\le e^{\|C\|_1+\|D\|_1}\|C-D\|_1.
}
\]
Hence ordered endpoint maps converge along the subsequence.
\end{lemma}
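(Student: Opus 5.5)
The proof has two independent parts: a compactness statement for the rescaled generators, and a Lipschitz estimate for endpoint maps of linear ODEs. The convergence of endpoint maps then follows by combining them.

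\emph{Compactness.} The hypotheses translate exactly into a uniform bound in $BV(0,1)$ for $\widehat C_n$. By the scaling identities, $\|\widehat C_n\|_{L^1(0,1)}\le M$ and $\operatorname{Var}_{(0,1)}(\widehat C_n)\le M_{BV}$. The values lie in the finite-dimensional matrix space, so we can argue entrywise. The standard compact embedding $BV(0,1)\hookrightarrow L^1(0,1)$ (Helly's selection theorem) then gives a subsequence converging strongly in $L^1$ to some $C_*$, which again has variation at most $M_{BV}$.

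To keep the argument elementary, first note that each $\widehat C_n$ is uniformly bounded in sup norm. The essential oscillation is at most the variation, and the mean is at most $M$, so $\|\widehat C_n\|_\infty\le M+M_{BV}$. Next write each entry of $\widehat C_n$ as a difference of two monotone functions, each uniformly bounded. Helly's theorem yields pointwise a.e.\ convergence of these monotone pieces along a subsequence. Dominated convergence on $(0,1)$ then upgrades this to $L^1$ convergence. The only care needed is choosing good representatives, for instance right-continuous ones, so that pointwise values make sense.

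\emph{Endpoint stability.} Let $W=Z_C-Z_D$. It satisfies
\[
W'=CW+(C-D)Z_D,\qquad W(0)=0.
\]
Grönwall applied to $Z_D'=DZ_D$ gives $\|Z_D(s)\|\le e^{\|D\|_1}$. Writing $W$ with Duhamel's formula through the propagator of $C$, which is bounded by $e^{\|C\|_1}$, gives
\[
\|W(1)\|\le e^{\|C\|_1}\int_0^1\|C-D\|\,e^{\|D\|_1}\,ds.
\]
This is the boxed bound. One could instead run Grönwall directly on $\|W\|$ and get the same result.

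\emph{Conclusion.} Apply the stability bound with $C=\widehat C_n$ and $D=C_*$. The factor $e^{\|\widehat C_n\|_1+\|C_*\|_1}$ is at most $e^{2M}$, since $\|C_*\|_1\le\liminf_n\|\widehat C_n\|_1\le M$. Hence $Z_{\widehat C_n}(1)\to Z_{C_*}(1)$. Finally, rescaling time shows that $Z_{\widehat C_n}(1)$ is exactly the ordered endpoint map of $C_n$ over $I_n$.

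The main obstacle is the compactness step, specifically justifying strong rather than weak convergence. The rest is routine, and I would rely on the Helly/monotone decomposition described above.
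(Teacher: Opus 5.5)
Your proposal is correct and follows the paper's route: the scaling identities give a uniform $BV(0,1;M_2)$ bound, and BV compactness gives strong $L^1$ convergence (you make this explicit via Helly's theorem and the monotone decomposition, where the paper simply cites the compact embedding). The endpoint estimate then comes from the Gronwall bound on $Z_D$ plus a Duhamel step for $Z_C-Z_D$ through the propagator of $C$, which is equivalent to the paper's second Gronwall application; your remarks that $\|C_*\|_1\le M$ and that $Z_{\widehat C_n}(1)$ is exactly the endpoint map of $C_n$ on $I_n$ fill in points the paper leaves implicit.
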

\begin{proof}
By the change of variables $t=t_n+\theta_ns$,
\[
 \|\widehat C_n\|_{L^1(0,1)}
 =\int_0^1\theta_n\|C_n(t_n+\theta_ns)\|\,ds
 =\int_{I_n}\|C_n(t)\|\,dt
 \le M.
\]
Similarly, total variation transforms according to
\[
 \operatorname{Var}_{(0,1)}(\widehat C_n)
 =\theta_n\operatorname{Var}_{I_n}(C_n)
 \le M_{BV}.
\]
Because the matrix space $M_2(\R)$ is finite dimensional, componentwise BV compactness applies.  A uniformly bounded set in $BV(0,1;M_2)$ is relatively compact in $L^1(0,1;M_2)$.  Hence there is a subsequence and a limit $C_*\in BV(0,1;M_2)$ such that
\[
 \widehat C_n\to C_*
 \qquad\text{strongly in }L^1(0,1).
\]

We now prove the endpoint stability estimate.  Let $Z_C$ and $Z_D$ solve
\[
 Z_C'=CZ_C,
 \qquad
 Z_D'=DZ_D,
 \qquad
 Z_C(0)=Z_D(0)=I.
\]
The integral equations give
\[
 Z_C(t)=I+\int_0^tC(s)Z_C(s)\,ds,
\]
so Gronwall's inequality yields
\[
 \|Z_C(t)\|\le e^{\int_0^t\|C(s)\|ds}\le e^{\|C\|_1},
\]
and similarly $\|Z_D(t)\|\le e^{\|D\|_1}$.
Subtracting the two integral equations,
\[
 Z_C(t)-Z_D(t)
 =\int_0^tC(s)(Z_C-Z_D)(s)\,ds
 +\int_0^t(C-D)(s)Z_D(s)\,ds.
\]
Taking norms gives
\[
 \|Z_C(t)-Z_D(t)\|
 \le\int_0^t\|C(s)\|\|Z_C-Z_D\|(s)ds
 +e^{\|D\|_1}\|C-D\|_{L^1(0,t)}.
\]
A second application of Gronwall gives
\[
 \|Z_C(t)-Z_D(t)\|
 \le e^{\|C\|_1+\|D\|_1}\|C-D\|_1.
\]
Evaluating at $t=1$ proves the displayed estimate.  Since the right-hand side tends to zero whenever $C_n\to C_*$ in $L^1$ with uniformly bounded $L^1$ norms, the ordered endpoint maps converge along the selected subsequence.
\end{proof}

\medskip
\noindent\textbf{Compactness of rescaled generators.}
\begin{theorem}\label{thm:bvrigidity}
Let $I_n=[t_n,t_n+\theta_n]$ be a selected sequence.  Assume the generator action and rescaled BV norm are uniformly bounded, and assume that the ordered endpoint response is uniformly nontrivial in the sense that
\[
 \|Z_{\widehat C_n}(1)-I\|\ge\eta_*>0.
\]
Suppose moreover that the action-weighted provenance, age, flow-map, frequency, and contact marks are tight.  Then, after passage to a subsequence, the rescaled generators converge strongly in $L^1(0,1)$, the ordered endpoint maps converge, and the corresponding marked action measures converge weakly to a nonzero Radon measure supported on the tight marked state class.  Consequently loss of this compactness can occur only through unbounded rescaled variation or failure of tightness of at least one of the displayed marks.
\end{theorem}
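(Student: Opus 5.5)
The proof has three parts: compactness of the generators, convergence of the endpoint maps, and weak convergence of the marked action measures to a nonzero limit. The first two come directly from \cref{lem:bvcompact}. By hypothesis the action $\int_{I_n}\|C_n\|$ is at most $M$ and the rescaled variation $\theta_n\operatorname{Var}_{I_n}(C_n)$ is at most $M_{BV}$. The lemma then gives a subsequence with $\widehat C_n\to C_*$ strongly in $L^1(0,1)$. Applying the Gronwall stability estimate with $C=\widehat C_n$ and $D=C_*$ gives
\[
\|Z_{\widehat C_n}(1)-Z_{C_*}(1)\|\le e^{M+\|C_*\|_1}\|\widehat C_n-C_*\|_1\to0.
\]
Lower semicontinuity of the $L^1$ norm gives $\|C_*\|_1\le M$, so the prefactor is uniform. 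Passing to the limit in the nontriviality hypothesis yields $\|Z_{C_*}(1)-I\|\ge\eta_*$. In particular $C_*\neq0$.

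For the marked measures, I define $\mu_n$ on $[0,1]\times\mathcal M$, where $\mathcal M$ is the metrizable mark space (provenance, age, flow map, frequency, contact). It is the pushforward of $\|\widehat C_n(s)\|\,ds$ under $s\mapsto(s,\mathrm{mark}_n(s))$. Each $\mu_n$ has total mass at most $M$. Tightness of the action-weighted marks means that for every $\epsilon>0$ there is a compact $K_\epsilon\subset\mathcal M$ with $\mu_n([0,1]\times K_\epsilon^c)<\epsilon$ uniformly in $n$. Prokhorov's theorem then gives a further subsequence with $\mu_n\rightharpoonup\mu_*$ weakly as finite Radon measures. The support of $\mu_*$ lies in the closure of $\bigcup_\epsilon [0,1]\times K_\epsilon$, which I take to be the tight marked state class. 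Prokhorov for bounded, not probability, measures needs both uniform mass bounds and tightness; both are available here.

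The main obstacle is showing $\mu_*\neq0$, since weak limits can lose mass only at infinity and tightness is exactly what prevents this. The total masses satisfy $\mu_n([0,1]\times\mathcal M)=\|\widehat C_n\|_1$. By tightness the masses converge, so $\mu_*([0,1]\times\mathcal M)=\lim\|\widehat C_n\|_1$. Strong $L^1$ convergence identifies this limit as $\|C_*\|_1$. On the other hand,
\[
\|Z_C(1)-I\|\le e^{\|C\|_1}-1,
\]
which follows from the integral equation and Gronwall. Hence $\|C_*\|_1\ge\log(1+\eta_*)>0$, so the limit mass is positive. This lower bound can also be obtained directly along the sequence, without passing through $C_*$.

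The concluding routing statement is the contrapositive. Every hypothesis other than bounded action and nontriviality was used in exactly one place: bounded rescaled variation for $L^1$ compactness, and mark tightness for Prokhorov and mass retention. So if compactness fails while the action stays bounded and the response stays nontrivial, then either the rescaled variation is unbounded or one of the marks is not tight.
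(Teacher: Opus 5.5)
Your proposal is correct and follows the paper's proof essentially step for step: \cref{lem:bvcompact} for strong $L^1$ compactness and endpoint convergence, the bound $\|Z_C(1)-I\|\le e^{\|C\|_1}-1$ to keep the action mass at least $\log(1+\eta_*)$, Prokhorov's theorem for the marked action measures, and the contrapositive for the routing statement. The only difference is cosmetic: the paper derives the floor $V_n\ge\log(1+\eta_*)$ along the sequence and passes it to the limit, rather than going through $\|C_*\|_1$, and you note that route yourself.
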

\begin{proof}
The first conclusion is exactly the compactness statement of \cref{lem:bvcompact}.  After passing to a subsequence,
\[
 \widehat C_n\to C_*
 \qquad\text{strongly in }L^1(0,1).
\]
The endpoint stability estimate then gives
\[
 Z_{\widehat C_n}(1)\to Z_{C_*}(1).
\]
Because
\[
 \|Z_{\widehat C_n}(1)-I\|\ge\eta_*,
\]
continuity implies
\[
 \|Z_{C_*}(1)-I\|\ge\eta_*,
\]
so the limiting ordered response is still nontrivial.

We next record why the limiting marked action measure cannot vanish.  Let
\[
 V_n:=\int_0^1\|\widehat C_n(s)\|\,ds.
\]
For any integrable generator $C$,
\[
 \|Z_C(1)-I\|
 \le e^{\|C\|_1}-1.
\]
Indeed, this follows directly from the integral equation and the bound $\|Z_C(t)\|\le e^{\|C\|_1}$.  Therefore
\[
 \eta_*
 \le e^{V_n}-1,
\]
which implies the uniform positive lower bound
\[
 V_n\ge \log(1+\eta_*)>0.
\]
The assumed action upper bound gives $V_n\le M$.  Thus the action-weighted measures have total mass bounded above and bounded away from zero.

To include the auxiliary information, let $\mathfrak m_n(s)$ denote the collection of provenance, age, flow-map, frequency, and contact marks, and define the finite marked measure schematically by
\[
 d\mu_n(s,m)
 =\|\widehat C_n(s)\|\,ds\,\delta_{\mathfrak m_n(s)}(dm).
\]
The time variable already lies in the compact interval $[0,1]$.  By hypothesis the mark marginals are tight, so the family $(\mu_n)$ is tight after normalization by its uniformly bounded total mass.  Prokhorov's theorem gives a subsequence converging weakly to a finite Radon measure $\mu_*$.  The lower total-mass bound passes to the limit and gives
\[
 \mu_*([0,1]\times\mathcal M)
 \ge\log(1+\eta_*)>0,
\]
so $\mu_*$ is nonzero.  Its mass is carried by the tight marked state class selected by the hypothesis.

Thus bounded action, bounded rescaled variation, and tightness of every displayed mark force a convergent marked subsequence with nontrivial endpoint response.  Taking the contrapositive, failure of such compactness requires either blow-up of the rescaled variation or failure of tightness in at least one of the provenance, age, flow-map, frequency, or contact coordinates.
\end{proof}

\section{A quantitative BV floor for first-level cancellation}
Let $C\in BV([0,\theta];M_2)$ be trace free, put
\[
 V=\int_0^\theta\|C(t)\|\,dt,
 \qquad
 A=\int_0^\theta C(t)\,dt,
\]
and let $Z'=CZ$, $Z(0)=I$.

\begin{proposition}\label{prop:bv-floor}
If $\det Z(\theta)=1$ and $\kappa_2(Z(\theta))\ge K>1$, then
\[
\boxed{
\theta\operatorname{Var}(C)
\ge
\frac{4((\sqrt K-1)-e^V\|A\|)_+}{e^V-1}.
}
\]
In particular, if the first average level $A$ is negligible while the endpoint condition number remains bounded below by $K>1$, then $\theta\operatorname{Var}(C)$ has a strictly positive lower bound.
\end{proposition}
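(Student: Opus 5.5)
The approach is to compare $Z(\theta)$ with the identity and with the first Magnus level, and to control the deviation by a variation term. First I will convert the condition-number hypothesis into a lower bound on $\|Z(\theta)-I\|$. Since $\det Z(\theta)=1$, the singular values are $\sigma$ and $1/\sigma$ with $\sigma^2=\kappa_2\ge K$, so $\sigma\ge\sqrt K$. Hence
\[
\|Z(\theta)-I\|\ge \|Z(\theta)\|-1\ge \sqrt K-1 .
\]
Next I will split $Z(\theta)-I$ into a part driven by the average $A$ and a part driven by the oscillation of $C$ about its mean. For the reference generator I will take the constant $\bar C=A/\theta$ on $[0,\theta]$, whose flow is $e^{A}$. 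This gives
\[
\|Z(\theta)-I\|\le \|Z(\theta)-e^{A}\|+\|e^A-I\|,
\qquad
\|e^A-I\|\le e^{\|A\|}-1\le e^V\|A\| ,
\]
where the last bound uses $\|A\|\le V$.

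For the first term I will use the endpoint stability estimate of \cref{lem:bvcompact}, written in its variation-of-constants form on $[0,\theta]$:
\[
Z(\theta)-e^{A}=\int_0^\theta Z(\theta)Z(s)^{-1}\,(C(s)-\bar C)\,e^{s\bar C}\,ds .
\]
Estimating the propagators crudely by $e^V$ yields $\|Z(\theta)-e^A\|\lesssim e^{2V}\|C-\bar C\|_{L^1}$. That exponential weight is too lossy, so instead I will use a Gronwall-type route that produces the factor $(e^V-1)$. The plan is to write
\[
Z(\theta)-I=\int_0^\theta C(s)\,ds+\int_0^\theta C(s)\bigl(Z(s)-I\bigr)\,ds ,
\]
then integrate by parts against the primitive $P(s)=\int_0^s (C-\bar C)$ to move the oscillation onto $dC$. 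The $L^\infty$ size of $P$ is controlled by the BV estimate
\[
\|C-\bar C\|_{L^1(0,\theta)}\le \tfrac{\theta}{2}\operatorname{Var}(C)
\]
(with $\tfrac{\theta}{4}$ obtainable via a median). The remaining factor is $\int\|C\|\,\|Z-I\|\le \int_0^\theta\|C(s)\|\bigl(e^{\int_0^s\|C\|}-1\bigr)ds$, which is bounded by a multiple of $e^V-1$.

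Combining these, the target inequality is
\[
\sqrt K-1\le e^V\|A\|+\tfrac14(e^V-1)\,\theta\operatorname{Var}(C).
\]
Rearranging and taking the positive part gives the boxed floor. The "in particular" clause follows immediately: if $\|A\|\to0$ while $K>1$ is fixed and $V$ is bounded, the numerator stays bounded below by a positive constant.

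The main obstacle is matching the exact constant $4/(e^V-1)$. This requires keeping the $e^V$ factor on the mean term and the $(e^V-1)$ factor on the oscillation term, instead of the cruder $e^{2V}$ from the stability lemma, and it requires the sharp $\tfrac{\theta}{4}$ form of the BV oscillation bound. If the median-based constant does not close, I will settle for a constant $c$ in place of $4$, since the qualitative floor is unaffected.
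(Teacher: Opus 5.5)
Your skeleton is the paper's: the singular-value bound $\|Z(\theta)-I\|\ge\sqrt K-1$, the splitting $C=A/\theta+P'$, an integration by parts, and the target inequality $\sqrt K-1\le e^V\|A\|+\tfrac14(e^V-1)\theta\operatorname{Var}(C)$. However, the upper bound as you outline it does not reach that inequality.

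\emph{The two pieces of your upper bound cannot be combined.} You estimate $e^A-I$ separately, and then integrate by parts in an expansion of $Z(\theta)-I$, so the two estimates cannot simply be added. If you instead integrate by parts in the variation-of-constants formula for $Z(\theta)-e^A$, differentiating $e^{(\theta-s)\bar C}$ produces additional terms with factors involving $e^{\|A\|}$ in front of $\|P\|_\infty$. That does not reproduce the stated bound. The clean identity is
\[
Z(\theta)-I=\int_0^\theta CZ\,dt=\frac{A}{\theta}\int_0^\theta Z\,dt-\int_0^\theta P\,C\,Z\,dt,
\]
obtained from $P(0)=P(\theta)=0$ and $Z'=CZ$. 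The derivative lands on $Z$, not ``onto $dC$''. The weight in front of $\|P\|_\infty$ is then exactly
\[
\int_0^\theta\|C\|\,\|Z\|\le\int_0^\theta\|C(t)\|e^{\int_0^t\|C\|}\,dt=e^V-1 .
\]
The factor $\int\|C\|\,\|Z-I\|$ that you name is what remains when no integration by parts is performed, and in that case $\operatorname{Var}(C)$ never enters the estimate.

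\emph{The constant $\theta/4$ is not secured.} Bounding $\|P\|_\infty\le\|C-\bar C\|_{L^1}\le\tfrac\theta2\operatorname{Var}(C)$ only gives the floor with $4$ replaced by $2$. A median cannot improve the $L^1$ deviation. For $C=M\,\mathbf{1}_{[0,\theta/2]}$ with $\|M\|=1$, every constant $m$ satisfies
\[
\|C-m\|_{L^1}\ge\tfrac\theta2=\tfrac\theta2\operatorname{Var}(C).
\]
What you are missing is that $P$ vanishes at \emph{both} endpoints. This gives the Peano identity
\[
P(t)=\theta^{-1}\int_0^t\int_t^\theta\bigl(C(s)-C(u)\bigr)\,du\,ds,
\]
hence $\|P(t)\|\le\theta^{-1}t(\theta-t)\operatorname{Var}(C)\le\tfrac\theta4\operatorname{Var}(C)$. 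Alternatively, $P(t)=-\int_t^\theta(C-\bar C)$ gives $\|P(t)\|\le\tfrac12\|C-\bar C\|_{L^1}$, and combining this with your $\tfrac\theta2$ bound also yields $\tfrac\theta4$. The same step function shows that $\tfrac\theta4$ is attained.

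With these two corrections your argument becomes the paper's proof.
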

\begin{proof}
Define the centered primitive
\[
 B(t)=\int_0^t\left(C(s)-\frac A\theta\right)ds.
\]
Then $B(0)=B(\theta)=0$.  The one-dimensional Peano estimate for a BV function with zero endpoint average gives
\[
 \|B\|_{L^\infty(0,\theta)}
 \le\frac\theta4\operatorname{Var}(C).
\]

We write
\[
 C(t)=\frac A\theta+B'(t)
\]
in the distributional sense and use the integral equation
\[
 Z(\theta)-I=\int_0^\theta C(t)Z(t)dt.
\]
The constant-average part is bounded by
\[
 \left\|\int_0^\theta\frac A\theta Z(t)dt\right\|
 \le \|A\|\sup_{0\le t\le\theta}\|Z(t)\|
 \le e^V\|A\|.
\]
For the centered part, integration by parts uses $B(0)=B(\theta)=0$:
\[
 \int_0^\theta B'(t)Z(t)dt
 =-\int_0^\theta B(t)Z'(t)dt
 =-\int_0^\theta B(t)C(t)Z(t)dt.
\]
Therefore
\[
 \begin{aligned}
 \left\|\int_0^\theta B'(t)Z(t)dt\right\|
 &\le \|B\|_\infty
 \int_0^\theta\|C(t)\|\|Z(t)\|dt\\
 &\le \|B\|_\infty
 \int_0^\theta\|C(t)\|e^{\int_0^t\|C(s)\|ds}dt\\
 &=\|B\|_\infty(e^V-1).
 \end{aligned}
\]
Using the Peano bound gives
\[
 \|Z(\theta)-I\|
 \le e^V\|A\|
 +\frac\theta4(e^V-1)\operatorname{Var}(C).
\]

It remains to convert the condition-number hypothesis into a lower bound for $\|Z(\theta)-I\|_2$.  Because $\det Z(\theta)=1$, the two singular values are $\sigma$ and $\sigma^{-1}$ with $\sigma\ge1$.  The $2$-norm condition number is $\kappa_2=\sigma^2$, so $\kappa_2\ge K$ implies
\[
 \sigma\ge\sqrt K.
\]
For a unit right singular vector $v$ corresponding to $\sigma$,
\[
 \|(Z-I)v\|
 \ge \|Zv\|-\|v\|
 =\sigma-1
 \ge\sqrt K-1.
\]
Hence
\[
 \|Z(\theta)-I\|_2\ge\sqrt K-1.
\]
Combining the upper and lower estimates yields
\[
 \sqrt K-1
 \le e^V\|A\|
 +\frac\theta4(e^V-1)\operatorname{Var}(C).
\]
After rearranging and taking the positive part of the numerator, we obtain the displayed bound.
\end{proof}

\section{Sharpness and limiting examples}
\begin{itemize}[leftmargin=*]
\item $q=13/14$ lies above $8/9$, so the divergent-cancellation energy is
$R^{5/14}$ and the $8/9$ argument leaves that branch compatible with finite
energy.
\item The $12/13$ pincer explicitly retains older/incoming same-shell
cancellation as a transfer-fed alternative.
\item The $27/29$ theorem is retained as a sharp quantitative subcase.  The broader q-uniform theorem routes compact finite-depth source birth throughout $6/7<q<1$, so $q>27/29$ is not a separate tight endpoint branch.
\item The sharp native-sheet scaling $R_N=N^{-1/5}$ keeps order-one trace
response with bounded circulation only by producing a finer axis/$F^4$ concentration and
diverging quartic stock; it is preemption/output, not a strict descendant.
\item Bounded action with unbounded scaled variation is a genuine non-BV/rate
survivor, not a contradiction from scaling alone.
\end{itemize}

\section{Limitations and open alternatives}
The estimates above are exact or quantitative under the hypotheses stated in their respective sections.  They rule out several compact finite-depth preparations and identify the loss of compactness required by an infinite fresh transition sequence.  They do not bound all possible high-frequency, boundary-fed, or changing-provenance configurations.  The logarithmic-record estimates route sustained swirl energy into meridional records or signed critical compression, but they do not exclude either receiving branch and do not turn isolated record times into a parabolic essential-supremum bound.  In particular, bounded action with unbounded rescaled variation is a genuine noncompact alternative, and the quadratic circulation balance cannot be used as a Zeno-proof by summing a positive physical event cost.

The natural unresolved regimes are therefore those in which normalized frequency, material provenance, boundary/collar entrance, or generator variation fails to remain compact.  No unconditional regularity statement is inferred from this classification.

\appendix
\section{Critical scaling table}
The critical exponents used in the main text follow directly from the homogeneity calculations displayed there.

\end{document}